\let\Humlaut=\H
\theoremstyle{plain}
\newtheorem{thm}{Theorem}[section]
\newtheorem{prop}[thm]{Proposition}
\newtheorem{lem}[thm]{Lemma}
\newtheorem{cor}[thm]{Corollary}
\theoremstyle{definition}
\newtheorem{rem}[thm]{Remark}
\newtheorem{defn}[thm]{Definition}
\newtheorem{eg}[thm]{Example}
\newcommand{\bB}{{\mathbb{B}}}
\newcommand{\bC}{{\mathbb{C}}}
\newcommand{\bD}{{\mathbb{D}}}
\newcommand{\bN}{{\mathbb{N}}}
\newcommand{\bR}{{\mathbb{R}}}
\newcommand{\bT}{{\mathbb{T}}}
  \newcommand{\A}{{\mathcal{A}}}
  \newcommand{\F}{{\mathcal{F}}}
\renewcommand{\H}{{\mathcal{H}}}
  \newcommand{\I}{{\mathcal{I}}}
  \newcommand{\M}{{\mathcal{M}}}
\renewcommand{\S}{{\mathcal{S}}}
\newcommand{\ep}{\varepsilon}
\renewcommand{\phi}{\varphi}
\newcommand{\Bz}{{\mathbf{z}}}
\newcommand{\bsl}{\setminus}
\newcommand{\dsum}{\displaystyle\sum\limits}
\newcommand{\lip}{\langle}
\newcommand{\rip}{\rangle}
\newcommand{\ip}[1]{\lip #1 \rip}
\newcommand{\ol}{\overline}
\newcommand{\td}{\widetilde}
\newcommand{\wot}{\textsc{wot}}
\newcommand{\AD}{\mathrm{A}(\mathbb{D})}
\newcommand{\Hinf}{H^\infty }
\newcommand{\linf}{\ell^\infty }
\newcommand{\ltwo}{\ell^2}
\newcommand{\AND}{\text{ and }}
\newcommand{\FOR}{\text{ for }}
\newcommand{\FORAL}{\text{ for all }}
\newcommand{\qand}{\quad\text{and}\quad}
\newcommand{\qfor}{\quad\text{for}\quad}
\newcommand{\qforal}{\quad\text{for all}\quad}
\newcommand{\qqand}{\qquad\text{and}\qquad}
\newcommand{\dist}{\operatorname{dist}}
\newcommand{\id}{\operatorname{id}}
\newcommand{\Mult}{\operatorname{Mult}}
\newcommand{\ran}{\operatorname{Ran}}
\newcommand{\re}{\operatorname{Re}}
\newcommand{\spn}{\operatorname{span}}
\begin{document}
\title{Multipliers of embedded discs}

\author[K.R. Davidson]{Kenneth R. Davidson}
\address{Dept.\ of Pure Mathematics, University of Waterloo,
Waterloo, ON, Canada}
\email{krdavids@uwaterloo.ca}
\thanks{The first author is partially supported by an NSERC grant.}

\author[M. Hartz]{Michael Hartz}
\email{mphartz@uwaterloo.ca}
\thanks{The second author is partially supported by an Ontario Trillium Scholarship.}

\author[O.M. Shalit]{Orr Moshe Shalit}
\address{Dept.\ of Mathematics, Ben-Gurion University of the Negev,
Beersheva, Israel}
\email{oshalit@math.bgu.ac.il}
\thanks{The third author is partially supported by ISF Grant no. 474/12, by 
EU FP7/2007-2013 Grant no. 321749, and by GIF Grant no. 2297-2282.6/20.1}
\thanks{The authors thank the Faculty of Natural Science's Distinguished Scientist Visitors Program as well as the Center for Advanced Mathematical Studies at Ben-Gurion University of the Negev, for supporting the first author's visit to Ben-Gurion University during April 2013.}

\begin{abstract}
We consider a number of examples of multiplier algebras on Hilbert spaces
associated to discs embedded into a complex ball in order to examine the isomorphism
problem for multiplier algebras on complete Nevan\-linna-Pick reproducing kernel 
Hilbert spaces.
In particular, we exhibit uncountably many discs in the ball of $\ltwo$ which are 
multiplier biholomorphic but have non-isomorphic multiplier algebras.
We also show that there are \textit{closed} discs in the ball of $\ltwo$ which are varieties,
and examine their multiplier algebras.
In finite balls, we provide a counterpoint to a result of Alpay, Putinar and Vinnikov by 
providing a proper rational biholomorphism of the disc onto a variety $V$ in $\bB_2$ 
such that the multiplier algebra is not all of $\Hinf(V)$.
We also show that the transversality property, which is one of their hypotheses, is
a consequence of the smoothness that they require.
\end{abstract}

\subjclass[2010]{47L30, 47A13, 46E22}
\keywords{Non-selfadjoint operator algebras, reproducing kernel Hilbert spaces, 
multiplier algebra, isomorphism problem, embedded discs}
\maketitle

\section{Introduction} \label{S:intro}

We are concerned with the multiplier algebras of certain reproducing kernel Hilbert spaces
with the complete Nevan\-linna-Pick property. Using the universal property of the 
Drury-Arveson space, we can identify a variety $V$ in a complex ball $\bB_d$
(where $1 \le d \le \infty$) so that the Hilbert space is a space of analytic functions on $V$.
In this paper, $V$ will usually be homeomorphic to the unit disc.

For each $1 \le d \le \infty$, Drury-Arveson space or symmetric Fock space, $\H_d$, is a space of
analytic functions on $\bB_d$ (where $d=\infty$ corresponds to the unit ball of $\ltwo$) with
the reproducing kernel
\[ k(x,y) = \frac1{1-\ip{x,y}} \qfor x,y \in \bB_d.\]
This is spanned by the vectors $k_y(x) = k(x,y)$ for $y \in \bB_d$.
The multiplier algebra $\M_d$ consists of all (bounded analytic) functions on $\bB_d$ which multiply
$\H_d$ into itself.
It forms a maximal abelian \wot-closed algebra of operators.
Associated to each variety $V$ in $\bB_d$, there is the Hilbert space 
\[ \H_V = \overline{\spn}\{ k_y : y \in V \} . \]
This is considered as a Hilbert space of functions on $V$
with multiplier algebra $\M_V$ (also functions on $V$, not the whole ball). 

It follows that $\M_V$ is completely isometrically isomorphic to
$\M_d/J_V$, where $J_V$ is the ideal of multipliers vanishing on $V$  from \cite{DavPittsPick}(see also \cite{DRS2}).
In particular this means that every multiplier on $V$ extends to a multiplier on the whole ball.
This quotient naturally lives on the zero set of $J_V$.
For this reason, in \cite{DRS2}, we define a variety to be the intersection of zero sets
of multipliers (or of functions in the Hilbert space---see \cite{AM}):
\[
 V = V(\{h_i : i \in \I \}) = \bigcap_{i\in\I} Z(h_i) \cap \bB_d ,
\]
where $Z(h) = h^{-1}(0)$.
Thus there is a technical issue of what a variety should be, as this is not a local property
as in the classical definition of a variety. 

In \cite{DRS2}, we consider the problem of when $\M_V$ and $\M_W$ are isomorphic.
This is completely resolved for (completely) isometric isomorphism.
Here we will be concerned with the question of topological isomorphism.
(Note that since these algebras are semisimple, all algebraic isomorphisms 
are automatically norm continuous.)

The typical way to approach such isomorphism problems regarding algebras of functions
is via the space of characters (non-zero multiplicative linear functionals). 
An isomorphism $\phi:\M_V \to \M_W$ induces a weak-$*$ homeomorphism $\phi^*$
from the character space $M(\M_W)$ onto $M(\M_V)$. 
Moreover there is a natural map $\pi$ of the character space into $\ol{\bB_d}$ obtained
by evaluation on the row contraction $Z := \big[ Z_1 \ \dots \ Z_d \big]$ 
(where $Z_i$ are the multipliers by the coordinate functions $z_i$). Every
point $v \in V$ gives rise to an evaluation functional $\rho_v$ given by $\rho_v(f) = f(v)$. 
The subtlety of our problem stems from the fact that these point evaluation characters 
are typically only a small part of the character space $M(\M_V)$. 

It is shown in \cite[Proposition~3.2]{DRS2} that the points of $\pi(M(\M_V))$ in the open ball are precisely 
the variety $V$, the map is injective on $\pi^{-1}(V) = \{\rho_v : v \in V\}$, 
and these points correspond to the weak-$*$ continuous characters. 
Unfortunately the proof relies on \cite[Theorem 3.2]{DavPitts2}, which states that the characters 
of $\mathfrak{L}_d$ (or $\M_d$) lying in $\pi^{-1}(\bB_d)$ are precisely point evaluations. 
That theorem is not true for $d = \infty$, as we will show. Indeed for every $\lambda\in\bB_\infty$,
the fiber over every point is very large.
Sometimes we are able to work around this, but often we require stronger hypotheses to deal with this issue.
(A new version of \cite{DRS2} corrects this error.)

The main result in \cite{DRS2} about isomorphism is

\begin{thm}[Davidson-Ramsey-Shalit] \label{T:algiso}
Let $V$ and $W$ be varieties in $\bB_d$, with $d<\infty$,
which are the union of finitely many irreducible varieties and a discrete variety. 
Let $\phi$ be a unital algebra isomorphism of $\M_V$ onto $\M_W$. 
Then there exist holomorphic maps $F$ and $G$ from $\bB_d$ into $\bC^d$ 
with coefficients in $\M_d$ such that 
\begin{enumerate}[label=\normalfont{(\arabic*)}]
 \item $F|_{W} = \phi^*|_W \qand  G|_V = (\phi^{-1})^*|_V $
 \item $G \circ F |_W = \id_W \qand F \circ G|_V = \id_V$
 \item $\phi(f) = f \circ F \qfor f \in \M_V$, and
 \item $\phi^{-1}(g) = g \circ G \qfor g \in \M_W$. 
\end{enumerate}
\end{thm}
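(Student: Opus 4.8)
The plan is to extract the maps $F$ and $G$ from the action of $\phi$ on the coordinate multipliers and then to identify $\phi^*$ on the point-evaluation characters. Write $Z_1,\dots,Z_d$ for the coordinate multipliers and fix lifts $F_1,\dots,F_d \in \M_d$ of $\phi(Z_1|_V),\dots,\phi(Z_d|_V) \in \M_W$; set $F = (F_1,\dots,F_d)$, a holomorphic map $\bB_d \to \bC^d$ with coefficients in $\M_d$, and define $G$ symmetrically from the multipliers $\phi^{-1}(Z_i|_W)$. All four conclusions will follow once we establish the single identity
\[ \phi^*(\rho_w) = \rho_{F(w)} \qfor w \in W, \quad\text{with}\quad F(w) \in V. \]
First I would compute $\pi(\phi^*(\rho_w))$. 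Since $\pi$ evaluates a character on the coordinate row, $\pi(\phi^*(\rho_w)) = \big((\rho_w\circ\phi)(Z_1|_V),\dots,(\rho_w\circ\phi)(Z_d|_V)\big) = \big(\phi(Z_1|_V)(w),\dots\big) = F(w)$, and because $[Z_1\ \cdots\ Z_d]$ is a row contraction every character evaluates inside $\ol{\bB_d}$, so $F$ maps $W$ into $\ol{\bB_d}$. The decisive step is to push $F(w)$ into the \emph{open} ball: granting $F(w)\in\bB_d$, \cite[Proposition~3.2]{DRS2}, which rests on \cite[Theorem~3.2]{DavPitts2} and is valid precisely because $d<\infty$, shows that a character of $\M_V$ whose $\pi$-image lies in $\bB_d$ is the evaluation at that point; as $\phi^*(\rho_w)$ annihilates $J_V$ the point lies in $V$, yielding the displayed identity.

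Granting this, the conclusions are formal. Identifying $w$ with $\rho_w$, the identity $\phi^*(\rho_w) = \rho_{F(w)}$ is exactly $\phi^*|_W = F|_W$, and symmetrically $(\phi^{-1})^*|_V = G|_V$, which is (1). For (3), for each $f \in \M_V$ and $w \in W$ we have $\phi(f)(w) = (\phi^*(\rho_w))(f) = f(F(w))$; since a multiplier on $W$ is determined by its values on $W$, this gives $\phi(f) = f\circ F$, and (4) follows in the same way. For (2), $(\phi^{-1})^* = (\phi^*)^{-1}$ gives $(\phi^{-1})^*(\rho_{F(w)}) = \rho_w$, so that $G(F(w)) = \pi(\rho_w) = w$, whence $G\circ F|_W = \id_W$, and likewise $F\circ G|_V = \id_V$.

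The main obstacle is the decisive step above: showing that $\phi^*$ carries point evaluations of $\M_W$ to point evaluations of $\M_V$, rather than into a boundary fiber. The cleanest route is to prove that $\phi$ is weak-$*$ continuous, so that $\phi^*(\rho_w)$ is again weak-$*$ continuous and hence, by the characterization of weak-$*$ continuous characters in \cite[Proposition~3.2]{DRS2}, a point evaluation at a point of $V$. On a positive-dimensional irreducible component one can also argue directly by a maximum-modulus principle: if $\|F(w_0)\| = 1$ at an interior point $w_0$, then $1 - \sum_i \ol{F_i(w_0)}\,F_i$ is a multiplier with nonnegative real part (since $\re(1 - \ip{F(w),F(w_0)}) \ge 1 - \|F(w)\|\,\|F(w_0)\| \ge 0$) vanishing at $w_0$, forcing $F$ to be constant of modulus one on that component and contradicting the injectivity of $\phi$. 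Assembling the components and handling the discrete part is exactly where the hypotheses that $V$ and $W$ are finite unions of irreducible varieties and a discrete variety enter, and this is the step that genuinely breaks down for $d=\infty$, where the fiber over each point of $\bB_\infty$ is large.
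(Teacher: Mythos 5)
You should first note that this paper does not actually prove Theorem~\ref{T:algiso}: it is quoted from \cite{DRS2} as background, so the only internal point of comparison is the paper's proof of the analogous Theorem~\ref{T:iso_discs}, which follows essentially the skeleton you describe (set $F_i = \phi(Z_i)$, show $F$ is holomorphic with multiplier coefficients, push $F(w)$ off the sphere by a maximum-modulus argument, identify $\phi^*(\rho_w)$ with $\rho_{F(w)}$, and read off the composition formula). Your formal derivation of (1)--(4) from the single identity $\phi^*(\rho_w)=\rho_{F(w)}$ is correct, and your observation that each coordinate $\phi(Z_i|_V)$ lifts to $\M_d$ (so that $F$ extends to the whole ball when $d<\infty$) matches Remark~\ref{R:extend}.

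The genuine gap is exactly the step you defer. Your maximum-modulus argument shows only that if $\|F(w_0)\|=1$ at some point of a \emph{positive-dimensional irreducible component}, then $F$ is constant \emph{on that component}; this neither contradicts injectivity of $\phi$ (a single component being collapsed does not force $Z_i$ to be scalar in $\M_V$) nor says anything about the discrete part of $W$, whose points could individually be sent to the sphere, where $\pi^{-1}(F(w))$ need not be a point evaluation. Ruling this out --- i.e., showing no component of $W$ is sent into the corona --- is the actual content that requires the hypothesis that $V$ and $W$ are finite unions of irreducible varieties plus a discrete variety, and you state that this is ``where the hypotheses enter'' without carrying it out. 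Your proposed alternative, to first prove that $\phi$ is weak-$*$ continuous, is circular as stated: in the paper (end of the proof of Theorem~\ref{T:iso_discs}, adapting \cite[Section 11.3]{DRS1}) weak-$*$ continuity is \emph{deduced from} the fact that $\phi$ is a composition operator, not used to establish it. So the proposal is a correct reduction plus a correct treatment of the easy case, but the decisive step for general $V$ and $W$ is missing.
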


In particular, when the multiplier algebras are isomorphic, the two
varieties are biholomorphic. 
Since the additional feature is that the component functions of $F$ and $G$ are multipliers,
we will call this a \textit{multiplier biholomorphism}.
Also the isomorphism is always the composition operator induced by this biholomorphism.
The finiteness conditions were needed to establish that $\phi^*$ maps $W$ into $V$, 
instead of possibly sending some component into the corona.

In the case of homogeneous varieties (zero sets of a family of homogeneous polynomials), 
everything works out in the best possible way.
The results of \cite{DRS1,Hartz} combine to show that the multipliers of two
homogeneous varieties are isomorphic if and only if the varieties are biholomorphic.
Moreover the two algebras are similar, and there is a linear map that implements
a (possibly different) biholomorphism between the homogeneous varieties $W$ and $V$.

However, in the non-homogeneous case, a number of examples in \cite{DRS2} showed that
a complete converse to the theorem above is not possible.
One serious issue is that multiplier biholomorphism is not evidently an equivalence relation.
This is because the extension of the maps to the whole ball cannot be composed because
the range is not contained in the ball.
In fact, it is not an equivalence relation at least when the varieties have infinitely many 
components (see Remark~\ref{R:not equivalence}).

There were two types of examples, and we now consider both to have a certain pathology.
The first example concerned Blaschke sequences in the unit disc \cite[Examples 6.2, 8.2]{DRS2}.
The multiplier algebra is isomorphic to $\linf$ if and only if the sequence is an interpolating sequence.
But there are non-interpolating sequences which are biholomorphic to interpolating sequences
in the strong sense that there are $\Hinf$ functions (and even $\AD$ functions) implementing
the bijection. We consider these examples to be somewhat pathological because the variety
has infinitely many components.
See Proposition~\ref{P:Blaschke} for further discussion.

The second class of examples were discs in $\bB_\infty$ \cite[Examples 6.11, 6.12, 6.13]{DRS2}.
The pathological nature has to do with the fact that these are varieties in an infinite dimensional ball.
We shall examine these examples in more detail here. 
In Section \ref{S:Binfty}, we  give precise conditions for when the 
multiplier algebras of two embedded discs in $\bB_\infty$ of a special type are isomorphic.
In particular, we explain when an algebra of this kind is isomorphic to $H^\infty$. 
Our methods allow us to show that there are uncountably many discs which are multiplier biholomorphic
such that their multiplier algebras are not isomorphic.
This will include a family of kernels on the unit disc that lie between Hardy space and Dirichlet space.

Moreover when this family is continued beyond Dirichlet space (section~\ref{S:compact}), we find varieties in
$\bB_\infty$ which are homeomorphic to \textit{closed discs} in the \textit{interior} of the ball.
Again there are uncountably many non-isomorphic multiplier algebras on closed discs which
are all multiplier biholomorphic varieties.
In section~\ref{S:interpolating}, we use interpolating sequences to show that the multiplier 
algebras on many of these compact discs cannot be isomorphic to a multiplier algebra on 
a variety whose closure meets the boundary.
This pathological behaviour seems to depend on the fact that the varieties live in
the infinite dimensional ball.

We shall also be concerned with proper embeddings of discs into finite dimensional balls $\bB_d$.
Here the prototype result is due to Alpay, Putinar and Vinnikov \cite{APV}:

\begin{thm}[Alpay-Putinar-Vinnikov] \label{T:APV}
Suppose that $f$ is an injective holomorphic function of $\bD$ onto 
$V \subset \bB_d$ such that
\begin{enumerate}[label=\normalfont{(\arabic*)}]
\item $f$ extends to an injective $C^2$ function on $\ol{\bD}$,
\item $f'(z) \ne 0$ on $\ol{\bD}$,
\item $\|f(z)\| = 1$ if and only if $|z|=1$,
\item $\ip{f(z),f'(z)} \ne 0$ when $|z|=1$.
\end{enumerate}
Then $\M_V$ is isomorphic to $\Hinf$.
\end{thm}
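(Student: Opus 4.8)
The plan is to realise the isomorphism as the composition operator $C_f\colon\phi\mapsto\phi\circ f$ and to reduce the statement to a comparison of two reproducing kernels on $\bD$. Since $f\colon\bD\to V$ is a biholomorphism, pulling functions back along $f$ identifies $\H_V$ unitarily with the reproducing kernel Hilbert space $\H$ on $\bD$ whose kernel is
\[
 \td k(z,w)=k(f(z),f(w))=\frac1{1-\ip{f(z),f(w)}},
\]
and under this identification $C_f$ carries $\M_V$ isometrically onto $\Mult(\td k)$. So it suffices to prove that $\Mult(\td k)=\Hinf=\Mult(s)$ with equivalent norms, where $s(z,w)=(1-z\ol w)^{-1}$ is the Szeg\Humlaut{o} kernel. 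I would in fact establish the sharper statement that $\H$ and $H^2(\bD)$ coincide \emph{as sets with equivalent norms}: two reproducing kernel Hilbert spaces that are equal as topological vector spaces have identical multiplier algebras with equivalent multiplier norms, because $M_\phi$ is a fixed linear map whose boundedness depends only on the topology. By a standard comparison of kernels (Aronszajn), $\H=H^2(\bD)$ with equivalent norms is in turn equivalent to the existence of a constant $c$ with $c^{-2}s\preceq\td k\preceq c^2 s$ in the order of positive semidefinite kernels.

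The first and easier half is the comparison on the diagonal, $1-\|f(z)\|^2\asymp 1-|z|^2$, and this is where hypotheses (1), (3) and (4) enter. Writing $F(z)=\ip{f(z),f(z)}$, condition (3) gives $F\equiv1$ on $\partial\bD$; differentiating $F(e^{i\theta})\equiv1$ shows that $e^{i\theta}\ip{f'(e^{i\theta}),f(e^{i\theta})}$ is real, and computing the radial derivative gives $\partial_r F(re^{i\theta})\big|_{r=1}=2\,e^{i\theta}\ip{f'(e^{i\theta}),f(e^{i\theta})}$. By (4) this quantity is nonzero, and it is positive because $F<1$ inside the disc; since $f\in C^2(\ol{\bD})$ it is continuous, hence bounded above and away from $0$ on the compact circle. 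A Taylor expansion then yields $1-F(z)\asymp1-|z|^2$ near $\partial\bD$, and the same two-sided bounds hold throughout $\bD$, since $1-F$ and $1-|z|^2$ are continuous and strictly positive on compact subsets.

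The main obstacle is to upgrade this diagonal estimate to the full positive semidefinite comparison $c^{-2}s\preceq\td k\preceq c^2 s$. I would study the quotient $R(z,w)=\frac{1-z\ol w}{1-\ip{f(z),f(w)}}=\td k(z,w)/s(z,w)$, which is holomorphic in $z$, anti-holomorphic in $w$, and whose numerator and denominator vanish on the same set, namely the boundary diagonal $\{(z,z):|z|=1\}$: here injectivity on $\ol{\bD}$ from (1) together with the Cauchy--Schwarz inequality rules out any interior coincidence $\ip{f(z),f(w)}=1$. The goal is to show that $R$ extends continuously to $\ol{\bD}\times\ol{\bD}$ and is bounded above and below. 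Away from the boundary diagonal both kernels are finite, continuous and nonvanishing, so the whole difficulty is a uniform local estimate near $\{(z,z):|z|=1\}$, where one expands $1-\ip{f(z),f(w)}$ to second order using $f\in C^2(\ol{\bD})$ and compares it against $1-z\ol w$, with (4) controlling the leading normal behaviour and (2) keeping the expansion nondegenerate. The boundary values $R(e^{i\theta},e^{i\theta})=\big(e^{i\theta}\ip{f'(e^{i\theta}),f(e^{i\theta})}\big)^{-1}$ need not be constant, so I would first absorb them into a coboundary $g(z)\ol{g(w)}$, with $g$ the outer function determined by $|g|^2=R(e^{i\theta},e^{i\theta})$ on $\partial\bD$, so that $g$ and $1/g$ lie in $\Hinf$; dividing $\td k$ by $g(z)\ol{g(w)}$ leaves $\Mult(\td k)$ unchanged and normalises $R$ to have boundary values $1$. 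After this normalisation the $C^2$ comparison near the boundary should deliver the two inequalities $\td k\preceq c^2 s$ and $s\preceq c^2\td k$, completing the proof. I expect this off-diagonal, positive definite comparison near $\partial\bD$ to be the genuinely hard step; the diagonal estimate and the reduction to kernel comparison are routine once it is in place.
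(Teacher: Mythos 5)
A preliminary remark: the paper offers no proof of Theorem~\ref{T:APV}; it is quoted from \cite{APV}, with the smoothness hypothesis raised from $C^1$ to $C^2$ following \cite[2.3.6]{ARS}, so the only question is whether your argument stands on its own. Your framework is correct and is in fact the framework of the original proof: $C_f$ carries $\M_V$ isometrically onto $\Mult(\td k)$ for $\td k(z,w)=(1-\ip{f(z),f(w)})^{-1}$, and if $\H_{\td k}=H^2$ as topological vector spaces then the two multiplier algebras coincide with equivalent norms; by Aronszajn this equality of spaces is indeed equivalent to the two-sided domination $c^{-2}s\preceq\td k\preceq c^2 s$. The diagonal estimate $1-\|f(z)\|^2\asymp 1-|z|^2$ is also fine: it is Proposition~\ref{P:transversal} plus compactness of the circle, and by Section~\ref{S:transversality} hypothesis (4) is automatic, so it costs you nothing.

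The genuine gap is the step you yourself flag with ``should deliver.'' A second-order Taylor expansion of $1-\ip{f(z),f(w)}$ near the boundary diagonal yields that $R(z,w)=(1-z\ol w)/(1-\ip{f(z),f(w)})$ extends continuously to $\ol{\bD}\times\ol{\bD}$ and is bounded above and below; but a two-sided \emph{pointwise} bound on the ratio of two kernels does not imply domination in the positive-semidefinite order. (Compare $k_1\equiv 1$ with $k_2(z,w)=1+\ep z\ol w$: the ratio lies in $[1-\ep,1+\ep]$, yet no multiple of $k_1$ dominates $k_2$, since $\H_{k_1}$ is one-dimensional.) Positive semidefiniteness of $\td k-c^{-2}s$ is a global property of the matrices $\bigl[(\td k-c^{-2}s)(z_i,z_j)\bigr]$ and cannot be read off from a local expansion of the symbol; the outer-function normalisation, while legitimate (rescaling a kernel by $g(z)\ol{g(w)}$ with $g$ nonvanishing preserves the multiplier algebra), does not change this. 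This is precisely where the published proofs do their real work: the norm equivalence $\|h\|_{\H_{\td k}}\asymp\|h\|_{H^2}$ is obtained not by comparing the kernels pointwise but by comparing integral (Littlewood--Paley/Carleson-measure type) characterisations of the two norms, and it is in that estimate that the $C^2$ hypothesis is consumed --- indeed \cite{ARS} observe that the $C^1$ hypothesis originally stated in \cite{APV} is not sufficient to make this step work. As it stands, your proposal reduces the theorem correctly and settles the easy half, but leaves its entire analytic content unproved.
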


Note that (3) ensures that the map is proper, (1) implies in particular that
the image of $\ol{\bD}$ is homeomorphic to a closed disc,
while (2) ensures that the inverse map is also holomorphic---so that
this map is a biholomorphism of the two varieties.
Condition (4) is a transversality condition.
We remark that \cite{APV} only asks that $h$ be $C^1$, but in \cite[2.3.6]{ARS}
where this result is generalized to finitely connected planar domains,
they point out that $C^2$ is needed to make the proof work.
This result is further extended in \cite{KMS} to finite Riemann surfaces.

The property that the map is a biholomorphism is clearly necessary.
The kind of difficulty one encounters otherwise is illustrated by the
map $f(z) = \frac1{\sqrt2}(z^2,z^3)$. This map is a proper, bijective, and rational 
map onto a variety $V$ called the Neil parabola.
It extends to be $C^\infty$ on $\ol{\bD}$ and is transversal at the boundary.
However $f'(0) = (0,0)$, so there is a singularity that prevents the inverse
map from being analytic. The multiplier algebra $\M_V = \Hinf(V)$ is naturally identified with
the proper subalgebra of $\Hinf$ consisting of functions $h$ such that $h'(0)=0$.

We will show in Section \ref{S:transversality} that the transversality condition (4) is a consequence of being $C^1$.
A continuous example where transversality fails is presented in Section \ref{S:tangential}.

We will also show (in Section \ref{S:crossing}) that for a minor weakening of the hypotheses of Theorem~\ref{T:APV},
the conclusion is no longer valid.
More precisely, we exhibit a proper rational map $f$ of the disc into $\bB_2$
which satisfies all of the hypotheses except for the fact that the $C^\infty$
extension to $\ol{\bD}$ is not injective, as the boundary crosses itself once,
where the multiplier algebra is not $\Hinf$. 
It is worth noting that this example does not serve as a counterexample to the 
converse of Theorem~\ref{T:algiso}, since the holomorphic inverse 
$f^{-1} : f(\bD) \rightarrow \bD$ is a bounded  analytic function, but not a multiplier. 

In the above mentioned example, $f$ fails to induce an isomorphism between 
$H^\infty$ and $\M_{f(\bD)}$ because $f$ is not injective on $\partial \bD$. 
This begs the question whether the failure can be detected intrinsically in $\bD$. 
In Section \ref{S:pseudo} we show that if a biholomorphism $F: W \rightarrow V$ 
induces an isomorphism $\phi : \M_V \rightarrow \M_W$, then $F$ must be a bi-Lipschitz 
mapping with respect to the pseudohyperbolic distance. 
Re-examination of the example of the preceding paragraph shows that indeed 
$f$ fails to be bi-Lipschitz, hence cannot induce an isomorphism.
Then Theorem \ref{T:pseudo_gen} yields another proof that $\M_V$ is not isomorphic to $H^\infty$. 
However Example~\ref{E:Blaschke} shows that for Blaschke sequences, being bi-Lipschitz 
does not imply isomorphism.

It could possibly be true that a converse to Theorem~\ref{T:algiso} could hold
if the variety has only finitely many irreducible components.
The finiteness of $d$ and the finiteness of the number of components eliminates
all of the counterexamples that we know about.

\section{Multipliers on discs and automorphism invariance}  \label{S:auto_inv}

Let $f:\bD \to V = f(\bD) \subset \bB^d$ be a proper holomorphic map. 
In the case of $d<\infty$, it is well known that if $f$ is injective and $f'(z) \ne 0$ for all $z \in \bD$ then the complex structure
on $V$ as a subset of $\bC^d$ coincides with the complex structure
induced from the homeomorphism with $\bD$. 
We require the analogous result for the case $d=\infty$.

A function $f: \Omega_1 \rightarrow \Omega_2$ between two open balls of two Hilbert spaces is said to be holomorphic if it is Fr\'{e}chet differentiable at every point. Equivalently, $f$ is holomorphic if around every point in $\Omega_1$ there is some neighborhood in which $f$ is represented by a convergent (vector valued) power series.

Suppose that $V,W \subset \ltwo$.
A function $h:V\to \ltwo$ will be called holomorphic if for every $v\in V$, there is a ball
$b_r(v)$ in $\ltwo$ and a holomorphic function $g$ on $b_r(v)$ such that $g|_{V\cap b_r(v)} = h|_{V\cap b_r(v)}$. 
A bijective map $f$ between $V$ and $W$ will be called a biholomorphism provided that 
both $f$ and $f^{-1}$ are holomorphic. 

The following definition is not standard so it is singled out. 

\begin{defn}
We say that a map $f$ from the unit disc into the open unit ball of a Hilbert space is proper if $\lim_{|z|\rightarrow 1}\|f(z)\| = 1$. 
\end{defn}

When the target space is finite dimensional this definition agrees (in this setting) with the standard definition of ``proper map", which is that $f$ is proper if the preimage of every compact set is compact. We require this definition for dealing with maps into infinite dimensional balls. 

The following result is well known when the range is contained in $\bC^d$ for $d<\infty$.
It may well also be known for $d = \infty$, but we have not found this result anywhere.

\begin{prop} \label{P:holomorphic}
Let $f:\bD \to V = f(\bD) \subset \bB_\infty$ be a proper injective holomorphic function such that $f'(z) \ne 0$ for $z \in \bD$. Then $f^{-1}$ is holomorphic.
More generally, a function $h:V\to \bC$ is holomorphic if and only if $h\circ f$ is holomorphic.
\end{prop}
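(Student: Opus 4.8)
The plan is to first show that $f$ is a homeomorphism onto $V$, i.e.\ that $f^{-1}$ is continuous; this is the one step where properness is essential. Suppose $f(z_n) \to f(z_0)$ with $z_n,z_0\in\bD$. Since $\ol{\bD}$ is compact, any subsequence of $(z_n)$ has a further subsequence $(z_{n_k})$ converging to some $w\in\ol{\bD}$. If $|w|=1$, then properness gives $\|f(z_{n_k})\|\to 1$, which contradicts $\|f(z_{n_k})\|\to\|f(z_0)\|<1$. If instead $w\in\bD$, continuity of $f$ yields $f(w)=f(z_0)$, so $w=z_0$ by injectivity. Thus every convergent subsequence of $(z_n)$ has limit $z_0$, and therefore $z_n\to z_0$. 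Hence $f^{-1}\colon V\to\bD$ is continuous.

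Next I would build a local holomorphic extension of $f^{-1}$ near each $v_0=f(z_0)$. One cannot apply a Banach-space inverse function theorem directly, because $Df(z_0)$ is injective but far from surjective; instead I reduce to one complex variable. Since $f'(z_0)\ne 0$, choose a bounded linear functional $\ell$ on $\ltwo$ with $\ell(f'(z_0))\ne 0$ (Hahn--Banach). Then $\psi:=\ell\circ f$ is holomorphic on $\bD$ with $\psi'(z_0)=\ell(f'(z_0))\ne 0$, so the one-variable inverse function theorem gives neighborhoods $z_0\in U\subset\bD$ and $\psi(z_0)\in\Omega\subset\bC$ with $\psi\colon U\to\Omega$ biholomorphic. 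As $\ell$ is linear and bounded, hence holomorphic on $\ltwo$, the map $g:=\psi^{-1}\circ\ell$ is holomorphic on the open set $\ell^{-1}(\Omega)\ni v_0$, and $g(f(z))=\psi^{-1}(\psi(z))=z$ for $z\in U$. By continuity of $f^{-1}$ there is a ball $b_r(v_0)\subset\ell^{-1}(\Omega)$ with $f^{-1}(V\cap b_r(v_0))\subset U$, so $g=f^{-1}$ on $V\cap b_r(v_0)$. This realizes $f^{-1}$ as holomorphic.

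For the equivalence, the forward direction is immediate: if $h$ is holomorphic then near $v_0$ it agrees with a holomorphic $\widetilde h$ on a ball, and $h\circ f=\widetilde h\circ f$ is holomorphic as a composition. For the converse, write $h=(h\circ f)\circ f^{-1}$ and use the local extension $g$ from the previous step. Shrinking $b_r(v_0)$ so that $g(b_r(v_0))\subset\bD$ (possible since $g(v_0)=z_0\in\bD$ and $\bD$ is open), the function $(h\circ f)\circ g$ is holomorphic on $b_r(v_0)$ and restricts to $h$ on $V\cap b_r(v_0)$; hence $h$ is holomorphic.

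I expect the genuine obstacle to be the first step, the continuity of $f^{-1}$, because in infinite dimensions a continuous injection need not be a homeomorphism onto its image, and it is exactly properness at $\partial\bD$ that prevents the sequence from escaping to the boundary sphere. After that, the argument is the familiar immersion/inverse-function scheme, made to work in $\ltwo$ by pairing against a single functional $\ell$ to descend to the scalar inverse function theorem.
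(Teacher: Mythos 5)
Your proof is correct and follows essentially the same route as the paper's: both reduce to the one-variable inverse function theorem by pairing against the direction $f'(z_0)$ (your functional $\ell$ is the paper's rank-one projection $P$ onto $\spn\{f'(z_0)\}$ in disguise), and both use properness together with compactness of $\ol{\bD}$ in the same subsequence argument. The only organizational difference is that you isolate continuity of $f^{-1}$ as a standalone first step, whereas the paper runs the identical compactness argument to show instead that $P$ is injective on a relative neighbourhood of $v_0$ in $V$.
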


\begin{proof}
Fix $v_0=f(z_0) \in V$. 
As $f'(z_0) \ne 0$,  we can define the rank one projection $P$ onto $\spn\{f'(z_0)\}$. The composed function $P \circ f$ is an analytic function on the disc with nonzero derivative at $0$, hence injective in a neighborhood of $0$. 

We claim that  there is an $r>0$ so that $P$ is injective
on $b_r(v_0) \cap V$. Assume toward a contradiction that $P$ is not injective in any neighbourhood of $v_0$ in $V$. Then there are sequences $w_n$ and $\tilde w_n$ in $V$ which converge to $v_0$ with $w_n \neq \tilde w_n$ and $P w_n = P \tilde w_n$. Write $w_n = f (z_n)$ and $\tilde w_n = f(\tilde z_n)$, and note that $z_n \neq \tilde z_n$. Properness of $f$ implies that  $z_n$ and $\tilde z_n$ are contained in a disc of radius $r < 1$, so by passing to a subsequence, we may assume that $z_n \to z$ and $\tilde z_n \to \tilde z$ for points $z,\tilde z$ in the disc. Thus $f(z) = f(\tilde z) = v_0$. Since $f$ is injective, it follows that $z = \tilde z = z_0$. But $(P f)(z_n) = (P f)(\tilde z_n)$, which contradicts the fact that $P f$ is injective in a neighbourhood of $z_0$. 

Therefore, there is a neighbourhood $b_\ep(z_0) \subset f^{-1}(b_r(v_0))$
so that $Pf|_{b_\ep(z_0)}$ is an injective holomorphic function such that $(Pf)'(z) \ne 0$
for all $z \in b_\ep(z_0)$.
There is an $r_0$ with $0 < r_0 \le r$ so that 
\[ P b_{r_0}(v_0) \subset Pf(b_\ep(z_0)) .\]
Therefore $(Pf|_{b_{\ep}(z_0)})^{-1}$ is a holomorphic function.
If follows that $g = (Pf)^{-1} P$ is a holomorphic function on $b_{r_0}(v_0)$.
It is evident that
\[ g|_{V\cap b_{r_0}(v)} = f^{-1}|_{V\cap b_{r_0}(v)} .\]

Now suppose that $h:V\to\bC$. 
Since the composition of holomorphic functions is holomorphic, 
it follows that if $h$ is holomorphic, then so is $h \circ f$.
Conversely, suppose that $h \circ f$ is holomorphic on $\bD$.
Let $v_0 = f(z_0) \in V$, and let $r_0$ and $\ep$ be as in the previous paragraph.
Since $P|_{b_{r_0}(v_0) \cap V}$ is a homeomorphism onto $b_{r_0}(Pv_0) \cap \ran P$, there is a function 
\[ k: (b_{r_0}(Pv_0) \cap \ran P) \to \bC \]
such that $h = (k \circ P)|_{V\cap b_{r_0}(v)}$.
Now $k = (h \circ f) \circ (Pf)^{-1}$ is holomorphic.
Therefore $\tilde h = k \circ P$ is a holomorphic function on $b_{r_0}(v_0)$
such that $h = \tilde h|_{V\cap b_{r_0}(v)}$.
Thus $h$ is holomorphic.
\end{proof}

We let $\bB_\infty$ denote the open unit ball of $\ltwo$. 
The following result is well known if the range is contained in $\bB_d$ for $d < \infty$.

\begin{cor} \label{C:Hinf(V)}
If $f:\bD \to V = f(\bD) \subset \bB_\infty$ is a biholomorphism, then the space $\Hinf(V)$
of bounded analytic functions on $V$ coincides with $\{ h \circ f^{-1} : h \in \Hinf \}$.
\end{cor}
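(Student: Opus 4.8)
The plan is to deduce this corollary directly from Proposition \ref{P:holomorphic}, which already does the substantive work. First I would observe that $\Hinf(V)$ consists, by definition, of those holomorphic functions $h : V \to \bC$ that are bounded, and that multiplication by $f$ induces a map $h \mapsto h \circ f^{-1}$ in one direction and $g \mapsto g \circ f$ in the other. The heart of the matter is the biconditional in Proposition \ref{P:holomorphic}: a function $h : V \to \bC$ is holomorphic if and only if $h \circ f$ is holomorphic on $\bD$. This immediately sets up a bijection between holomorphic functions on $V$ and holomorphic functions on $\bD$, since $f$ is a biholomorphism and in particular a homeomorphism, so $h \mapsto h \circ f$ and $g \mapsto g \circ f^{-1}$ are mutually inverse.

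Next I would handle the boundedness constraint, which is the only thing the corollary adds beyond holomorphy. Because $f$ is a bijection, composition with $f$ (or $f^{-1}$) preserves sup-norms exactly: $\sup_{v \in V}|h(v)| = \sup_{z \in \bD}|h(f(z))| = \sup_{z \in \bD}|(h\circ f)(z)|$. Hence $h$ is a bounded holomorphic function on $V$ if and only if $h \circ f$ is a bounded holomorphic function on $\bD$, i.e. $h \circ f \in \Hinf$. Writing $g = h \circ f$, this gives $h = g \circ f^{-1}$ with $g \in \Hinf$, which is exactly the containment $\Hinf(V) \subseteq \{g \circ f^{-1} : g \in \Hinf\}$. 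The reverse containment is the same computation read backwards: given $g \in \Hinf$, the function $g \circ f^{-1}$ is holomorphic on $V$ by the ``only if'' direction of Proposition \ref{P:holomorphic} applied to $h = g \circ f^{-1}$ (noting $h \circ f = g$ is holomorphic), and it is bounded with the same sup-norm as $g$.

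I do not expect a genuine obstacle here, since all the analytic content—the local holomorphy of $f^{-1}$ and the characterization of holomorphic functions on $V$ via composition with $f$—is already established in Proposition \ref{P:holomorphic}. The only point requiring a word of care is that $f$ being a biholomorphism means $f^{-1}$ is a \emph{holomorphic} bijection, so that $g \circ f^{-1}$ is well-defined and holomorphic on all of $V$; this is precisely what the proposition guarantees. The remaining steps are the routine observation that a bijective composition is an isometry for the supremum norm, so no estimate or approximation argument is needed.
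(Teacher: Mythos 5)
Your proposal is correct and is exactly the deduction the paper intends: the corollary is stated without proof as an immediate consequence of Proposition~\ref{P:holomorphic}, namely the biconditional ``$h$ is holomorphic iff $h\circ f$ is holomorphic'' combined with the trivial observation that composition with the bijection $f$ preserves sup-norms. Your remark that boundedness is the only content beyond holomorphy, and that $f^{-1}$ being holomorphic makes $g\circ f^{-1}$ holomorphic on $V$, is precisely the point.
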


If $V$ is a variety in $\bB_d$, then the multiplier algebra $\M_V$ of $\H_V$ is a complete quotient of
$\M_d$ by the ideal of multipliers vanishing on $V$. The quotient map is just the restriction map.
Thus every multiplier on $V$ extends to a multiplier on $\bB_d$. 
In particular, they extend to bounded holomorphic functions on the whole ball.
As noted in the introduction, the point evaluation $\rho_v(f) = f(v)$ is always a character on $\M_V$ for each $v\in V$.
Moreover, these are the only point evaluations on $\M_V$, and they coincide with the weak-$*$
continuous characters. 
We will identify the set $V$ with 
\[ \{\rho_v : v \in V\} \subset M(\M_V) . \]
Recall that there is a natural map $\pi$ from the character space of $\M_V$ into $\overline{\bB_d}$ given by $\pi(\rho) = (\rho(Z_i))_{i=1}^\infty$.

An earlier version of this paper relied on \cite[Theorem 3.2]{DavPitts2}, which states that the characters 
of $\mathfrak{L}_d$ (or $\M_d$) lying in $\pi^{-1}(\bB_d)$ are precisely point evaluations. 
That theorem is not true for $d = \infty$, as the following example shows. 
So some additional care is needed.
In the example, we work with the algebra $\M_\infty$ of multipliers on Drury-Arveson space. 

\begin{eg}
Let $(v_n)$ be a sequence in $\bB_\infty$ with the property that $||v_n|| \to 1$, but $(v_n)$ converges weakly to zero. 
By passing to a subsequence, we may assume that $(v_n)$ is interpolating for $\M_\infty$  
(see Proposition~\ref{P:interpolating_sequences}). 
Thus, the unital homomorphism $\Phi: \M_\infty \to \ell^\infty$ defined by $\Phi(f)(n) = f(v_n)$ is surjective, so its adjoint 
$\Phi^*$ is an embedding of the Stone-\v Cech compactification $\beta \bN$ into the character space of $\M_\infty$. 
We claim that every point in $\beta \bN \setminus \bN$ lies in the fiber over the origin, 
i.e., $\pi( \Phi^*(\beta N \setminus N)) = \{0\}$. 
Indeed, let $\phi \in \beta \bN \setminus \bN$. Then for every $k\ge1$, we have 
\[ (\Phi^*(\phi))(M_{z_k}) = \phi(( Z_k(v_n) )) = \lim_{n\to\infty} Z_k(v_n)  = 0 .\]
This shows that there are points in $\pi^{-1}(\bB_d)$ which are not point evaluations.

We can use this construction to show that there are also algebras $\M_V$ with characters that are fibered 
over points in $\bB_d \setminus V$. Let $(v_n)$ be as above, and assume that $v_0 = 0$. 
Let $f \in \M_\infty$ satisfy $f(0)=1$ and $f(v_n)=0$ for $n \ge 1$.
Then $V = f^{-1}(0)$ is a variety such that $0 \notin V$, but the fiber $\pi^{-1}(0)$ is large. 
\end{eg}

Now consider two discs in $\bB_d$, including the case $d=\infty$.
We need a few variants of results in \cite{DRS2}.
Consider two biholomorphisms of discs 
\[ f_i : \bD \to V_i = f_i(\bD) \subset \bB_d \qfor i=1,2 \]
such that $V_i$ are varieties in the sense of \cite{DRS2}; i.e.\ they are the intersection of zero sets of multipliers.
Suppose that $\phi:\M_{V_1} \to \M_{V_2}$ is a continuous algebra homomorphism,
and let $\phi^*$ be the induced map from $M(\M_{V_2})$ to $M(\M_{V_1})$.
Composing this with the evaluation map $\pi$ at the row contraction $Z = \big[ Z_1\ \dots \ Z_d\big]$
yields a map $F_\phi = \pi \circ \phi^*: M(\M_{V_2}) \to \ol{\bB_d}$ given by
\[ F_\phi(\rho) = \phi(Z)(\rho) = \big[ \rho(\phi(Z_i)) \big]_{i=1}^d \qfor \rho \in M(\M_{V_2}) .\]
In particular, $F_\phi|_{V_2}$ maps the variety $V_2$ into $\ol{\bB_d}$.

\begin{thm} \label{T:iso_discs}
Let $V_i$ be discs in $\bB_{d_i}$ as described above.
Furthermore,  assume that
\begin{enumerate}[label=\normalfont{(\arabic*)}]
\item for every $\lambda \in V_1$, the fiber $\pi^{-1}(\lambda) = \{\rho_\lambda\}$, and
\item $\pi (M(\M_{V_1})) \cap \bB_{d_1} = V_1$. 
\end{enumerate}
Let $\phi: \M_{V_1} \to \M_{V_2}$ be a continuous algebra homomorphism.
Then $F = F_\phi|_{V_2}$ is a holomorphic map with multiplier coefficients.
If $F$ is not constant, then $F$ maps $V_2$ into $V_1$.
In this case, $\phi^*|_{V_2} = F$ and $\phi$ is given by composition with $F$, that is,
\[ \phi(h) = h \circ F \qforal h \in \M_{V_1} .\]
In particular, if $\phi$ is injective, then $F$ is not constant.
And if $\phi$ is an isomorphism, $F$ is a biholomorphism of $V_2$ onto $V_1$.
\end{thm}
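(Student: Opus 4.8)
The plan is to extract from $\phi$ a holomorphic self-map, identify it with $F$ using hypotheses (1) and (2), and then, in the isomorphism case, build its inverse out of $\phi^{-1}$ and glue the two together by an identity-theorem argument. First I would record the structural facts. The components of $F$ are exactly the multipliers $\phi(Z_i) \in \M_{V_2}$, so $F = [\phi(Z_1)\ \phi(Z_2)\ \dots]$ is holomorphic on $V_2$ with multiplier coefficients and maps $V_2$ into $\ol{\bB_{d_1}}$, since $\pi$ carries the character space into $\ol{\bB_{d_1}}$. To control the image I would pull back along the biholomorphism $f_2$ and set $g = F \circ f_2 : \bD \to \ol{\bB_{d_1}}$, which is holomorphic with $\|g\| \le 1$; if $\|g(z_0)\| = 1$ at an interior point, then applying the maximum modulus principle to $z \mapsto \ip{g(z), g(z_0)}$ and equality in Cauchy--Schwarz forces $g$ to be constant. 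Hence, \emph{if $F$ is not constant then $F(V_2) \subset \bB_{d_1}$}. I would also observe that $\phi(1)$ is a holomorphic idempotent on the connected set $V_2$, hence identically $0$ or $1$; the first case makes $\phi \equiv 0$ and $F$ constant, so a nonconstant $F$ forces $\phi$ to be unital, which is what legitimizes the character bookkeeping below.

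Next I would identify $F$ on $V_2$. For $v \in V_2$ the functional $\sigma_v := \rho_v \circ \phi$ is a character on $\M_{V_1}$ (by unitality), and $\pi(\sigma_v) = [\sigma_v(Z_i)]_i = [\phi(Z_i)(v)]_i = F(v)$. When $F$ is nonconstant, $F(v) \in \bB_{d_1}$ by the previous paragraph, so hypothesis (2) gives $F(v) \in \pi(M(\M_{V_1})) \cap \bB_{d_1} = V_1$; thus $F$ maps $V_2$ into $V_1$. Hypothesis (1) then says the fiber $\pi^{-1}(F(v)) = \{\rho_{F(v)}\}$ is a single point, and since $\sigma_v$ lies in this fiber we get $\sigma_v = \rho_{F(v)}$. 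Reading this back, $\phi(h)(v) = \rho_v(\phi(h)) = \sigma_v(h) = h(F(v))$ for every $h \in \M_{V_1}$ and every $v$, i.e. $\phi(h) = h \circ F$ and $\phi^*|_{V_2} = F$.

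For the claim that an injective $\phi$ has nonconstant $F$, I note first that injectivity forces $\phi(1) \neq 0$, hence $\phi(1) = 1$ by the idempotent argument. Suppose toward a contradiction that $F \equiv c \in \ol{\bB_{d_1}}$. If $\|c\| < 1$ then $c \in V_1$ and $\sigma_v = \rho_c$ for every $v$, so $\phi(h) = h(c)\,1$ for all $h$, and $\ker \phi$ contains every multiplier vanishing at $c$ --- impossible. If $\|c\| = 1$, the linear multiplier $\ell = \ip{\,\cdot\,, c}|_{V_1}$ satisfies $\phi(\ell) = \sum_i \ol{c_i}\,\phi(Z_i) = \|c\|^2\,1 = \phi(1)$, so $\ell - 1 \in \ker \phi$; but $|\ell(\lambda)| < 1$ on $V_1$ forces $\ell - 1 \neq 0$ --- again impossible. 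Hence $F$ is nonconstant.

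Finally, suppose $\phi$ is an isomorphism; then $F$ is nonconstant, $\phi(h) = h \circ F$, and $F(V_2) \subset V_1$. Writing $W_j$ for the coordinate multipliers on $\bB_{d_2}$, I would set $g_j = \phi^{-1}(W_j) \in \M_{V_1}$ and $G = [g_1\ g_2\ \dots] : V_1 \to \ol{\bB_{d_2}}$. The key relation is $g_j \circ F = \phi(g_j) = \phi(\phi^{-1}(W_j)) = W_j$, i.e. $G \circ F = \id_{V_2}$; in particular $G$ is nonconstant, so the maximum-modulus argument gives $G(V_1) \subset \bB_{d_2}$. The step I expect to be the main obstacle is showing that $G$ in fact lands in $V_2$: hypotheses (1) and (2) are assumed only for $V_1$, so the argument of the first two paragraphs cannot simply be rerun for the inverse map. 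I would instead argue as follows. The map $f_1^{-1} \circ F \circ f_2$ is a nonconstant, hence open, holomorphic self-map of $\bD$, so $F(V_2)$ is open in $V_1$, and $G$ sends this open set into $V_2$ by $G \circ F = \id_{V_2}$. Writing $V_2 = \bigcap_i Z(h_i) \cap \bB_{d_2}$ with $h_i$ multipliers, each $h_i \circ G$ is holomorphic on the connected variety $V_1$ and vanishes on the nonempty open set $F(V_2)$, so by the identity theorem $h_i \circ G \equiv 0$; hence $G(V_1) \subset V_2$. The same identity-theorem argument applied to $F \circ G$, which equals $\id_{V_1}$ on $F(V_2)$, yields $F \circ G = \id_{V_1}$. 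Thus $F$ and $G$ are mutually inverse bijections between $V_2$ and $V_1$, and since $G = F^{-1}$ is holomorphic (Proposition~\ref{P:holomorphic}), $F$ is a biholomorphism of $V_2$ onto $V_1$.
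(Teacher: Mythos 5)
Your proposal is correct, and its first three paragraphs track the paper's argument closely: the same identification $F=[\phi(Z_i)]$, the same maximum-modulus step to push a nonconstant $F$ into the open ball, and the same use of hypotheses (1) and (2) to turn $\phi^*(\rho_v)$ into $\rho_{F(v)}$. (Your explicit treatment of unitality via the idempotent $\phi(1)$ is a point the paper leaves implicit, and is welcome; your two-case argument for ``injective $\Rightarrow$ nonconstant'' is sound but can be compressed to the paper's one line: $\phi(\lambda_i-Z_i)=0$ forces $Z_i=\lambda_i$ in $\M_{V_1}$, impossible since $V_1$ is a disc.) Where you genuinely diverge is the final step. The paper observes that a composition operator is weak-$*$ continuous, applies Krein--Smulian to get weak-$*$ continuity of $\phi^{-1}$, and concludes that $(\phi^{-1})^*$ carries point evaluations (the weak-$*$ continuous characters) to point evaluations; this yields surjectivity of $F$ and exhibits $F^{-1}=\pi\circ(\phi^{-1})^*$ with multiplier coefficients in one stroke, and moreover hands you weak-$*$ continuity of both $\phi$ and $\phi^{-1}$ as a byproduct used later in the paper. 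You instead build $G=[\phi^{-1}(W_j)]$ by hand, get $G\circ F=\id_{V_2}$ from $\phi\circ\phi^{-1}=\id$, and use openness of $F(V_2)$ in $V_1$ plus the identity theorem on $\bD$ (applied to the defining multipliers $h_i\circ G$ and to $F\circ G-\id$) to land $G$ in $V_2$ and close the loop. This is a legitimately different, more elementary route that avoids the weak-$*$ machinery entirely and deals head-on with the asymmetry of the hypotheses, at the cost of not recovering the weak-$*$ continuity statements. Two small points to tighten: for $d_i=\infty$ you should justify that $F\circ f_2$ (and likewise $G\circ f_1$) is actually holomorphic as an $\ltwo$-valued map --- the paper does this with a Cauchy--Schwarz estimate showing uniform convergence of $\ip{F\circ f_2(z),\alpha}$ --- and your closing appeal to Proposition~\ref{P:holomorphic} for the holomorphy of $G=F^{-1}$ is redundant, since you have already shown $G$ is holomorphic directly.
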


\begin{rem}
The special hypotheses on the variety $V_1$ always hold when $d_1 < \infty$ by \cite[Proposition 3.2]{DRS2}.
Proposition~\ref{P:condition(2)} below shows that even when $d_1=\infty$, it holds in many cases of interest.
\end{rem}

\begin{proof}
Let $F_i = \phi(Z_i)$ for $1 \le i \le d_1$.  
For $v\in V_2$, let $\rho_v$ denote the character of evaluation at $v$. Then
\[ F(v) = \pi (\phi^*(\rho_v)) = \rho_v(\phi(Z)) = \big[ F_i(v) \big]_{i=1}^{d_1} .\]
Observe that the coefficients $F_i$ are all multipliers.
Since characters are completely contractive, we have
\[ \|F(v)\|^2 = \sum_i |F_i(v)|^2 \le \|Z\|^2 = 1 \qforal v \in V_2 .\]
We claim that $F \circ f_2$ is holomorphic.
If $d_1<\infty$, this is clear since the functions $h_i = F_i \circ f_2$ are.
If $d_1=\infty$, let $\alpha = (a_i)_{i=1}^\infty \in \ltwo$. Then
\[
 \ip{F \circ f_2(z), \alpha} = \sum_{i=1}^\infty \bar a_i h_i(z) .
\]
This converges uniformly on $V_2$ since by the Cauchy-Schwarz inequality,
\begin{align*}
    \sum_{n= N}^\infty |\bar a_n h_n(z)| 
    &\le \Big( \sum_{n=N}^\infty |a_n|^2 \Big)^{1/2}
     \Big( \sum_{n=N}^\infty |h_n(z)|^2 \Big)^{1/2}  \\&\le
    \Big( \sum_{n=N}^\infty |a_n|^2 \Big)^{1/2} \xrightarrow{N \to \infty} 0.
\end{align*}
Therefore $\ip{F \circ f_2(v), \alpha}$ is holomorphic for all $\alpha$, so $F \circ f_2$  is holomorphic.
By Proposition~\ref{P:holomorphic}, $F$ is holomorphic.

Now we assume that $F$ is not constant, and show that $F$ maps into $\bB_d$. 
If $\mu = F(\lambda)$ lies in the boundary $\partial \bB_d$ for some $\lambda \in V_2$,
then $\ip{F \circ f_2(z),\mu}$ is a holomorphic function into $\ol{\bD}$ which takes the value $1$ at $\lambda$. 
By the maximum modulus principle, this function is constant. 
Since the image of $F$ is contained in the closed unit ball, $F \circ f_2$ itself and thus $F$
must be constant.
This contradicts our assumption. 

Now for $v\in V_2$, $\phi^*(\rho_v)$ is fibered over the point $F(v)$, which lies in $\bB_{d_1}$.
By hypotheses (1) and (2), the characters of $\M_{V_1}$ in $\pi^{-1}(\bB_d)$ are precisely the
point evaluations at points of $V_1$. Hence $F$ maps $V_2$ into $V_1$.
Therefore
\[ \phi(h)(v) = \phi^*(\rho_v)(h) = \rho_{F(v)}(h) = h(F(v))  \]
for all $h \in \M(V_1)$ and $v \in V_2$.

If $\phi$ is injective, it follows as in \cite[Lemma 5.4(2)]{DRS2} that $F$ maps $V_2$ into $V_1$.
The argument there assumed that $\phi$ is an isomorphism, but only injectivity is required. 
To recall, suppose that $F$ maps $V_2$ to a single point  $\lambda \in \ol{\bB_d}$. Then for every $i$, we have
\begin{equation*}
  \varphi(\lambda_i - Z_i) = \lambda_i - F_i = 0,
\end{equation*}
hence $Z_i = \lambda_i \in \M_{V_1}$ by injectivity of $\varphi$. This is clearly impossible as $V_1$ consists of
more than one point.
Therefore $F$ is not constant.

Now assume that $\phi$ is an isomorphism.
By an adaptation of \cite[Section 11.3]{DRS1}, the fact that $\phi$ is implemented by composition 
implies that $\phi$ is weak-$*$ continuous. 
Since the closed unit ball $B_1$ of $\M_{V_1}$ is weak-$*$ compact, and since the weak-$*$ topology on 
$\M_{V_2}$ is Hausdorff, $\varphi \big|_{B_1}: B_1 \to \varphi(B_1)$ is a homeomorphism in the weak-$*$ topologies. 
Every bounded set in $\M_{V_2}$ is contained in $r \varphi(B_1)$ for some $r > 0$, 
hence $\varphi^{-1}$ is weak-$*$ continuous on bounded sets.
It follows from the Krein-Smulian theorem that $\phi^{-1}$ is weak-$*$ continuous. 
In particular, $(\phi^{-1})^*$ takes point evaluations to point evaluations. 

We deduce that $\varphi^*(V_2) = V_1$, hence $F$ maps $V_2$ onto $V_1$. Since $F^{-1} = \pi \circ (\varphi^{-1})^*$,
the map $F^{-1}$ is holomorphic with multiplier coefficients.
\end{proof}

\begin{rem} \label{R:extend}
Besides the special assumptions on $\M_{V_1}$, another issue that makes this a weaker result than Theorem~\ref{T:algiso} is that we do not
know if the map $F$ can be extended to the whole ball $\bB_\infty$ to be bounded, or
better yet a bounded multiplier.  Now $F$ is essentially $\phi(Z)$. So if we knew that
$\phi$ was completely bounded, then $F$ would be a bounded multiplier. 
Since $\M_V$ is a complete quotient of $\M_\infty$, we could lift this to a bounded multiplier map on the whole ball.
As it is, we only know that the coordinates are contractive multipliers---so they each extend to
contractive multipliers on the whole ball. 
When $d<\infty$, this then provides the desired extension.
But when $d=\infty$, extending each individual multiplier coefficient does not generally yield a bounded row multiplier.
\end{rem}

Recall that $\A_d$ is the closure in $\M_d$ of the polynomials.

\begin{prop} \label{P:condition(2)}
Suppose that a variety $V$ in $\bB_\infty$ is the intersection of zero sets of a family $\F \subset \A_d$.
Then $\pi (M(\M_V)) \cap \bB_d = V$. 
\end{prop}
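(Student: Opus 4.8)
The plan is to prove both inclusions, the easy one being $V \subseteq \pi(M(\M_V)) \cap \bB_d$: for each $v \in V$ the point evaluation $\rho_v$ is a character with $\pi(\rho_v) = v \in \bB_d$. For the reverse inclusion I would fix a character $\rho \in M(\M_V)$ whose image $\lambda = \pi(\rho) = (\rho(Z_i))_i$ lies in the open ball $\bB_d$, and show $\lambda \in V$. Since $V = \bigcap_{h\in\F} Z(h) \cap \bB_d$, it is enough to prove that $h(\lambda) = 0$ for every $h \in \F$.

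The heart of the argument is that $\rho$ acts like evaluation at $\lambda$ on the coordinate algebra. For any polynomial $p$ (involving only finitely many of the coordinate functions $z_i$), multiplicativity and linearity of $\rho$ give
\[ \rho(p|_V) = p\big(\rho(Z_1),\rho(Z_2),\dots\big) = p(\lambda), \]
where only the finitely many values $\rho(Z_i)$ appearing in $p$ are used. I would then extend this identity from polynomials to all of $\A_d$ by density and continuity. Two facts drive this. First, $\rho$ is a character of a Banach algebra, hence norm-continuous, and the restriction map $\M_d \to \M_V$ is contractive, so if $p_k \to h$ in $\M_d$ then $\rho(p_k|_V) \to \rho(h|_V)$. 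Second, the multiplier norm dominates the supremum norm on $\bB_d$: since $M_f^* k_\lambda = \overline{f(\lambda)}\, k_\lambda$ we have $|f(\lambda)| \le \|M_f\|$ for every $\lambda \in \bB_d$, so $p_k \to h$ uniformly on $\bB_d$ and in particular $p_k(\lambda) \to h(\lambda)$. Combining, for $h \in \F \subset \A_d$ and polynomials $p_k \to h$ in $\M_d$,
\[ \rho(h|_V) = \lim_{k} \rho(p_k|_V) = \lim_k p_k(\lambda) = h(\lambda). \]

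Finally, because each $h \in \F$ vanishes on $V$, its restriction $h|_V$ is the zero multiplier, so $\rho(h|_V) = 0$, forcing $h(\lambda) = 0$. As $h \in \F$ was arbitrary, $\lambda$ lies in $\bigcap_{h\in\F} Z(h)$, and since $\lambda \in \bB_d$ we conclude $\lambda \in V$. The main obstacle is precisely the limiting step in the middle paragraph: one must interchange the limit both with the character (using its norm-continuity together with the contractivity of restriction) and with point evaluation at $\lambda$ (using that the multiplier norm dominates the sup norm on the ball). It is here that the hypothesis $\F \subset \A_d$, rather than merely $\F \subset \M_d$, is essential, since it is what makes polynomial approximation---and hence the identity $\rho(h|_V) = h(\lambda)$---available.
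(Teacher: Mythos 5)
Your proof is correct and follows essentially the same route as the paper: both arguments show that a character fibered over $\lambda\in\bB_d$ must agree with evaluation at $\lambda$ on polynomials and hence, by the density defining $\A_d$ together with contractivity of characters and of the restriction map (equivalently, the domination of the sup norm by the multiplier norm), on all of $\A_d$, so that each $h\in\F$, being zero on $V$, forces $h(\lambda)=0$. The only cosmetic difference is that the paper phrases this by lifting the character through the quotient $\M_d/J_V\simeq\M_V$, whereas you work directly with the restrictions $h|_V$; these are the same map.
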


\begin{proof}
Since $\M_V \simeq \M_d/J_V$ where $J_V$ is the ideal of multipliers vanishing on $V$, 
every character $\phi$ of $\M_V$ lifts to a character $\psi$ of $\M_d$ that annihilates $J_V$.
Assume that $\lambda\in\bB_\infty$ and $\phi\in\pi^{-1}(\lambda)$, whence $\psi\in\pi^{-1}(\lambda)$ also.
Then $\psi(f)=f(\lambda)$ for every polynomial $f$, and hence for every $f \in \A_d$. 
In particular, as every $f\in \F$ belongs to $J_V$, we have $0 = \psi(f) = f(\lambda)$.
Therefore $\lambda$ belongs to $V$.
\end{proof}

\begin{rem} \label{R:condition(2)}
When the functions $\F$ defining $V$ belong to $\A_d$, they extend to be continuous on the closed ball.
It follows by the same argument that if $\|\lambda\|=1$ and a character $\phi\in\pi^{-1}(\lambda)$,
then $f(\lambda)=0$ for every $f\in\F$. Hence $\lambda \in \bigcap_{f\in\F} f^{-1}(0)$. Thus in the case
where $\bigcap_{f\in\F} f^{-1}(0) = \ol{V}$, we can conclude that $\pi (M(\M_V)) = \ol{V}$. 
This is of interest even when $d<\infty$ (cf. \cite[Corollary 5.4]{KMS}). 
\end{rem}

It is well known that the conformal automorphisms of the unit disc are the
M\"obius maps $\theta = \lambda \big(\frac{z-a}{1-\bar az}\big)$ for $a \in \bD$ and $|\lambda|=1$.
Moreover, the automorphisms of $\Hinf$ are precisely the maps $C_\theta h = h \circ\theta$.
This familiar result is credited to Kakutani in \cite[p.143]{Hoffman}.

If $f:\bD\to V=f(\bD) \subset \bB_d$ is a biholomorphic map onto a variety $V$,
then we can transfer the M\"obius maps to conformal  automorphisms of $V$
by sending $\theta$ to $f\circ \theta \circ f^{-1}$.
Since this can be reversed, these are precisely the conformal automorphisms of $V$.
We say that $\M_V$ is \textit{automorphism invariant} if composition with all of these
conformal maps yield automorphisms of $\M_V$.
A sufficient criterion for automorphism invariance is given in \cite[Theorem 3.5]{CM}. 
For further discussion of this property, the reader is referred to Section 8 in \cite{CM98}.

\begin{cor} \label{C:Mobius}
Let $V_i$ be discs in $\bB_d$ as described above such that $V_1$ satisfies conditions $(1)$ and $(2)$ of Theorem~$\ref{T:iso_discs}$.
Let $\phi: \M_{V_1} \to \M_{V_2}$ be an algebra isomorphism.
Then there is a M\"obius map $\theta$ of $\bD$ such that the following diagram commutes:
\[
    \xymatrix{ \M_{V_1} \ar[r]^{\phi} \ar[d]_{C_{f_1}} & \M_{V_2} \ar[d]^{C_{f_2}} \\ \Hinf \ar[r]_{C_\theta} & \Hinf }
\]
\end{cor}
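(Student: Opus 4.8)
The plan is to read the M\"obius map directly off the biholomorphism produced by Theorem~\ref{T:iso_discs}. Since $\phi$ is an isomorphism, that theorem supplies a biholomorphism $F$ of $V_2$ onto $V_1$ with $\phi(h) = h \circ F$ for all $h \in \M_{V_1}$. The natural candidate is then
\[ \theta := f_1^{-1} \circ F \circ f_2 : \bD \to \bD , \]
and the entire content of the corollary reduces to checking that $\theta$ is a conformal automorphism of $\bD$ and that the resulting square commutes.

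First I would verify that $\theta$ is a biholomorphism of $\bD$ onto itself. As a composition of the bijections $f_2 : \bD \to V_2$, $F : V_2 \to V_1$, and $f_1^{-1} : V_1 \to \bD$, the map $\theta$ is a bijection of $\bD$. For holomorphy, note that $F \circ f_2 : \bD \to V_1 \subset \ltwo$ is holomorphic (this was established inside the proof of Theorem~\ref{T:iso_discs}), and that $f_1^{-1} : V_1 \to \bC$ is holomorphic because $f_1$ is a biholomorphism. Composing the scalar holomorphic function $f_1^{-1}$ with the holomorphic $\ltwo$-valued map $F \circ f_2$ --- using the local extension of $f_1^{-1}$ to a ball in $\ltwo$ guaranteed by the definition of holomorphy on $V_1$ (cf.\ Proposition~\ref{P:holomorphic}) --- shows that $\theta$ is holomorphic on $\bD$. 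The same argument applied to $\theta^{-1} = f_2^{-1} \circ F^{-1} \circ f_1$ shows that $\theta^{-1}$ is holomorphic. Hence $\theta$ is a biholomorphic self-map of $\bD$, and so by the classical description of $\operatorname{Aut}(\bD)$ recalled above it is a M\"obius map.

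Finally I would check commutativity by a direct computation. From the definition of $\theta$ we have $f_1 \circ \theta = F \circ f_2$, and hence for any $h \in \M_{V_1}$,
\begin{align*}
 C_{f_2}(\phi(h)) &= \phi(h) \circ f_2 = (h \circ F) \circ f_2 = h \circ (F \circ f_2) \\
 &= h \circ (f_1 \circ \theta) = (h \circ f_1) \circ \theta = C_\theta(C_{f_1}(h)) .
\end{align*}
Thus $C_{f_2} \circ \phi = C_\theta \circ C_{f_1}$, which is exactly the assertion that the diagram commutes. Here one also uses that $C_{f_i}$ does land in $\Hinf$: a multiplier on $V_i$ is in particular bounded and analytic, and Corollary~\ref{C:Hinf(V)} identifies such composites $h \circ f_i$ with elements of $\Hinf$.

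I expect no serious obstacle, since the substantive work is buried in Theorem~\ref{T:iso_discs}. The only point requiring genuine care is the holomorphy of $\theta^{\pm 1}$ in the infinite-dimensional setting, which is handled by the local-extension argument behind Proposition~\ref{P:holomorphic} rather than by any new idea.
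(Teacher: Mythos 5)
Your proof is correct and follows exactly the paper's argument: define $\theta = f_1^{-1}\circ F\circ f_2$ using the biholomorphism $F$ from Theorem~\ref{T:iso_discs}, observe that $\theta$ is a biholomorphic self-map of $\bD$ and hence a M\"obius map, and check commutativity by unwinding the compositions. The extra detail you supply on the holomorphy of $\theta^{\pm1}$ via Proposition~\ref{P:holomorphic} is exactly what the paper's ``observe'' is implicitly relying on.
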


\begin{proof}
By Theorem~\ref{T:iso_discs}, $F = \phi^*|_{V_2}$ is a biholomorphism of $V_2$ onto $V_1$, and $\phi$ is implemented by composition with $F$.
We will make use of the fact that $\M_{V_i}$ can  be embedded into $\Hinf$ via
\[
  C_{f_i}h = h \circ f_i \qfor h \in \M_{V_i}.
\]
This map is contractive since the multiplier norm on $\M_{V_i}$ dominates the sup norm.
Observe that $\theta = f_1^{-1} \circ F \circ f_2$ is a biholomorphism of $\bD$ onto itself,
and thus is a M\"obius map. Clearly this makes the diagram commute.
\end{proof}

Suppose that the automorphism $\theta$ can be chosen to be the identity or, equivalently, that
$C_F$, where $F=f_1 \circ f_2^{-1}$, is an isomorphism of $\M_{V_1}$ onto $\M_{V_2}$.
Then we will say that $\M_{V_1}$ and $\M_{V_2}$ are \textit{isomorphic via the natural map}.

\begin{cor}  \label{C:mob_inv_iso}
Let $V_i$ be discs in $\bB_d$ as described above such that $V_1$ satisfies conditions $(1)$ and $(2)$ of Theorem~$\ref{T:iso_discs}$.
If $\M_{V_1}$ or $\M_{V_2}$ is automorphism invariant, then $\M_{V_1}$ and $\M_{V_2}$ 
are isomorphic if and only if they are isomorphic via the natural map $C_F$, where $F=f_1\circ f_2^{-1}$.
In particular, if $\M_{V_1}$ is isomorphic to $\Hinf$, then $C_{f_1}$ is implements the isomorphism. \qed
\end{cor}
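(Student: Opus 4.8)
The backward implication is immediate: if the natural map $C_F$ is an isomorphism, then $\M_{V_1}$ and $\M_{V_2}$ are certainly isomorphic. So the plan is to prove the forward direction, namely that \emph{any} isomorphism can be replaced by the natural one. I would begin with an arbitrary isomorphism $\phi:\M_{V_1}\to\M_{V_2}$ and feed it into Corollary~\ref{C:Mobius}. Since $V_1$ satisfies conditions $(1)$ and $(2)$, this produces a biholomorphism $F_0 = \phi^*|_{V_2}$ of $V_2$ onto $V_1$ that implements $\phi$ by composition, together with a M\"obius map $\theta$ satisfying $F_0 = f_1\circ\theta\circ f_2^{-1}$. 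The whole point is then to absorb the extra M\"obius factor $\theta$ into a conformal automorphism of whichever variety carries the automorphism-invariance property.

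The key observation is that $F_0$ factors through the natural map $F = f_1\circ f_2^{-1}$ in two different ways, depending on where one inserts an identity. Inserting $f_1^{-1}\circ f_1$ gives $F_0 = (f_1\circ\theta\circ f_1^{-1})\circ F =: \Psi\circ F$, where $\Psi$ is a conformal automorphism of $V_1$; since $C_F(C_\Psi h) = h\circ\Psi\circ F = h\circ F_0 = \phi(h)$, this reads $\phi = C_F\circ C_\Psi$. Alternatively, inserting $f_2^{-1}\circ f_2$ gives $F_0 = F\circ(f_2\circ\theta\circ f_2^{-1}) =: F\circ\Theta$, where $\Theta$ is a conformal automorphism of $V_2$, and correspondingly $\phi = C_\Theta\circ C_F$. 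These two symmetric factorizations are exactly what allow the single hypothesis---automorphism invariance of just one of the two algebras---to suffice.

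To finish: if $\M_{V_1}$ is automorphism invariant, then $C_\Psi$ is an automorphism of $\M_{V_1}$, so $C_F = \phi\circ C_\Psi^{-1}$ is an isomorphism of $\M_{V_1}$ onto $\M_{V_2}$; if instead $\M_{V_2}$ is automorphism invariant, then $C_\Theta$ is an automorphism of $\M_{V_2}$ and $C_F = C_\Theta^{-1}\circ\phi$ does the job. In either case the natural map $C_F$ is an isomorphism, as desired. For the final assertion I would apply this with $V_2 = \bD = \bB_1$ and $f_2 = \id$, so that $\M_{V_2} = \Hinf$, which is automorphism invariant by the theorem credited to Kakutani; then $F = f_1$ and the conclusion is precisely that $C_{f_1}$ implements the isomorphism. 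The one delicate point---and really the only content---is that $C_F$ is a priori merely a composition map between function spaces and is not obviously a homomorphism of multiplier algebras; the factorizations above circumvent this by exhibiting $C_F$ as a composite of maps already known to be isomorphisms, so that no independent boundedness or multiplicativity estimate for $C_F$ is needed.
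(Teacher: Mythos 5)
Your argument is correct and is precisely the reasoning the paper leaves implicit: the corollary is stated with an immediate \qed as a direct consequence of Corollary~\ref{C:Mobius}, and your two factorizations $\phi = C_F\circ C_\Psi$ and $\phi = C_\Theta\circ C_F$ (absorbing $\theta$ into a conformal automorphism of $V_1$ or $V_2$, respectively) are exactly how the single automorphism-invariance hypothesis is meant to be used. The specialization to $V_2=\bD$, $f_2=\id$ for the final assertion also matches the intended reading.
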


\section{Transversality}  \label{S:transversality}

Recall that a map of $\bD$ into a ball $\bB_d$ is proper
if $\lim_{|z|\rightarrow 1}\|f(z)\| = 1$. 
If a proper analytic map $f$ of a surface $\S$ into a ball $\bB_d$ extends
to be $C^1$ on $\ol{\S}$, we shall say that the image meets the 
boundary of $\bB_d$ \textit{transversally} at $f(z)$ for $z \in \partial \S$
provided that
\[  \ip{ f(z), f'(z) } \ne 0 .\]

As noted in the introduction, \cite{APV, ARS, KMS} make transversality
at the boundary a hypothesis needed for their results. 
In this section, we show that a proper analytic $C^1$ embedding automatically
meets the boundary transversally. We first consider maps of the unit disc.
Then we provide an extension to finite Riemann surfaces.

\begin{prop} \label{P:transversal}
  Let $f: \bD \to \bB_d$ be an analytic map which extends to be continuous at $1$ such that $||f(1)||=1$. Then
  \begin{equation*}
    \frac{\re \ip{f(1) - f(z),f(1)}}{1 - |z|} \ge  \frac{ 1- | \ip{f(z),f(1)}|}{1-|z|} \ge \frac{1 - |a|}{1 + |a|} > 0
  \end{equation*}
  for all $z \in \bD$, where $a = \ip{f(0),f(1)}$. We have
  \begin{equation*}
     L = \liminf_{z \to 1, z \in \bD} \frac{1 - | \ip{f(z), f(1)}|}{1 - |z|} < \infty
  \end{equation*}
  if and only if the non-tangential limit of
  \begin{equation*}
    \frac{\ip{f(1) - f(z),f(1)}}{1 - z}
  \end{equation*}
  as $z \to 1$ exists. In this case, this limit equals $L$. In particular, if $f$ extends to be differentiable at $1$, then
  $\ip{f(1),f'(1)} > 0$.
\end{prop}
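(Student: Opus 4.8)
The plan is to push the entire statement onto the single scalar function
\[ g(z) = \ip{f(z),f(1)} . \]
Because the inner product is linear in its first variable and $f$ is holomorphic, $g$ is holomorphic on $\bD$; moreover $|g(z)| \le \|f(z)\|\,\|f(1)\| < 1$ for $z \in \bD$, so $g$ is an analytic self-map of $\bD$, and by continuity of $f$ at $1$ it extends continuously there with $g(1) = \|f(1)\|^2 = 1$. Set $a = g(0) = \ip{f(0),f(1)}$, and note $|a| \le \|f(0)\| < 1$ since $f(0)$ lies in the open ball. The first inequality is then immediate: as $\ip{f(1)-f(z),f(1)} = 1 - g(z)$, we get $\re\ip{f(1)-f(z),f(1)} = 1 - \re g(z) \ge 1 - |g(z)|$ from $\re w \le |w|$, and dividing by $1-|z|>0$ gives the first link of the chain.

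For the second inequality I would invoke the Schwarz--Pick lemma for $g$ based at the origin, $\bigl|\frac{g(z)-a}{1-\bar a\, g(z)}\bigr| \le |z|$, which yields the modulus bound $|g(z)| \le \frac{|a|+|z|}{1+|a|\,|z|}$. Hence
\[ 1 - |g(z)| \ge \frac{(1-|a|)(1-|z|)}{1+|a|\,|z|} , \]
and dividing by $1-|z|$ and using $1+|a|\,|z| \le 1+|a|$ gives $\frac{1-|g(z)|}{1-|z|} \ge \frac{1-|a|}{1+|a|}$, which is strictly positive because $|a|<1$.

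The equivalence involving $L$ is exactly the Julia--Carath\'eodory theorem applied to $g$ at the boundary point $\zeta=1$ with boundary value $\omega = g(1) = 1$. I would observe that $\frac{\ip{f(1)-f(z),f(1)}}{1-z} = \frac{1-g(z)}{1-z} = \frac{g(z)-1}{z-1}$ is precisely the difference quotient whose nontangential limit is the angular derivative of $g$ at $1$. Since $g$ already has the (full, hence nontangential) boundary value $1$, the angular-derivative hypothesis reduces exactly to existence of the nontangential limit of this quotient; the theorem then states that this holds if and only if $L = \liminf_{z\to1}\frac{1-|g(z)|}{1-|z|} < \infty$, and that in that case the limit equals $d\,\omega\,\bar\zeta = L$. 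This gives both directions at once and identifies the limit as $L$.

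Finally, if $f$ is differentiable at $1$ then so is $g$, with $g'(1) = \ip{f'(1),f(1)}$, so the limit of $\frac{1-g(z)}{1-z}$ exists and equals $g'(1)$; by the equivalence $L = g'(1) < \infty$, while the first part gives $L \ge \frac{1-|a|}{1+|a|} > 0$. Thus $\ip{f'(1),f(1)} = L > 0$, and taking conjugates $\ip{f(1),f'(1)} = L > 0$. The step I expect to require the most care is the Julia--Carath\'eodory bookkeeping: verifying that, because $\zeta = \omega = 1$, the angular derivative is forced to be the \emph{positive real} number $L$ rather than merely a complex number of modulus $L$, since it is exactly this positivity that upgrades the transversality pairing $\ip{f(1),f'(1)}$ to a strictly positive quantity.
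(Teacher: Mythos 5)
Your proposal is correct and follows essentially the same route as the paper: both reduce everything to the scalar self-map $g(z)=\ip{f(z),f(1)}$ of the disc, obtain the lower bound from the Schwarz--Pick lemma at the origin, and get the equivalence and the identification of the limit with $L>0$ from the Julia--Carath\'eodory theorem. Your version merely spells out the Schwarz--Pick computation (via $|g(z)|\le\frac{|a|+|z|}{1+|a||z|}$) that the paper cites from \cite{CM}.
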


\begin{proof}
  Consider the holomorphic function
  \begin{equation*}
    g: \bD \to \bD, \quad z \mapsto \ip{f(z),f(1)}.
  \end{equation*}
  An application of the Schwarz-Pick lemma (compare the discussion following Corollary 2.40 in \cite{CM}) shows that
  \begin{equation*}
    \frac{1 - |g(z)|}{1-|z|} \ge \frac{1 - |g(0)|}{1 + |g(0)|} \quad \FORAL z \in \bD,
  \end{equation*}
  from which the first claim readily follows.

  The second claim is a direct consequence of the Julia-Carath\'edory theorem \cite[Theorem 2.44]{CM}.
  It follows from the first part that $L > 0$. In particular, if $f$ extends to be differentiable at $1$, then
  \begin{equation*}
    \ip{f'(1), f(1)} = \lim_{z \to 1} \frac{ \ip{f(1) - f(z),f(1)}}{1 - z} = L > 0,
  \end{equation*}
  so that $f$ meets the boundary transversally at $f(1)$.
\end{proof}

The following consequence is immediate.

\begin{cor} \label{C:transversal}
If $f:\bD\to\bB_d$ is a proper analytic map which extends to be $C^1$ on $\ol{\bD}$,
then $f(\bD)$ meets the boundary transversally. 
Indeed, $\ip{ f(z), f'(z)z } > 0 $ for all $z \in \partial \bD$. 
\end{cor}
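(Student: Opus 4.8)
The plan is to deduce the corollary directly from Proposition~\ref{P:transversal} by exploiting the rotational symmetry of the disc. The key observation is that Proposition~\ref{P:transversal} is stated at the boundary point $1$, but any boundary point $z_0 \in \partial\bD$ can be moved to $1$ by a rotation. So I would fix $z_0 = e^{i\psi} \in \partial\bD$ and define the rotated map $f_{z_0}(z) := f(z_0 z)$, which is again an analytic map of $\bD$ into $\bB_d$. Since $f$ is proper and extends to be $C^1$ on $\ol{\bD}$, so does $f_{z_0}$, and in particular $f_{z_0}$ extends continuously to $1$ with $\|f_{z_0}(1)\| = \|f(z_0)\| = 1$; the fact that $\|f(z_0)\| = 1$ for boundary points follows from properness together with the $C^1$ (hence continuous) extension.

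Next I would apply Proposition~\ref{P:transversal} to $f_{z_0}$. Since $f_{z_0}$ extends to be differentiable at $1$ (as $f$ is $C^1$ on $\ol{\bD}$), the final assertion of the proposition gives
\[
  \ip{f_{z_0}(1), f_{z_0}'(1)} > 0.
\]
The remaining step is just to translate this back in terms of $f$ via the chain rule. We have $f_{z_0}(1) = f(z_0)$ and, by the chain rule, $f_{z_0}'(z) = z_0 f'(z_0 z)$, so that $f_{z_0}'(1) = z_0 f'(z_0)$. Substituting gives
\[
  \ip{f(z_0), z_0 f'(z_0)} > 0,
\]
which, after renaming $z_0$ as $z$, is exactly the claimed inequality $\ip{f(z), f'(z)z} > 0$ for all $z \in \partial\bD$. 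Since the real part of this quantity is positive, it is in particular nonzero, and hence $\ip{f(z), f'(z)} \ne 0$ as well, giving transversality at every boundary point.

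I do not expect any serious obstacle here, as the corollary is genuinely immediate once one notices that rotating the boundary point to $1$ reduces everything to the proposition. The only mild subtlety worth a sentence is verifying that $\|f(z)\| = 1$ on $\partial\bD$: this is where both hypotheses are used together, properness giving $\|f(z)\| \to 1$ as $|z| \to 1$ and the $C^1$ extension ensuring $\|f\|$ is continuous up to the boundary, so the limit is attained. I would also note that the conjugating factor is a \emph{rotation} rather than a general M\"obius map precisely because we need the derivative to pick up the simple factor $z_0$; a general automorphism would complicate the chain-rule computation unnecessarily.
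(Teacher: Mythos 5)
Your proof is correct and is precisely the argument the paper intends: the paper simply declares the corollary ``immediate'' from Proposition~\ref{P:transversal}, and your rotation $f_{z_0}(z)=f(z_0z)$ together with the chain rule is the standard way to make that explicit, including the correct observation that properness plus continuity on $\ol{\bD}$ forces $\|f(z_0)\|=1$ at boundary points.
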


Here is a generalization of Corollary~\ref{C:transversal} to finite Riemann surfaces. It will not be used in the sequel, but has consequences in the general theory. It shows that the transversality assumptions in \cite{APV,ARS,KMS} are redundant. 

\begin{prop}
Let $\S$ be a finite Riemann surface, and let $f: \S \rightarrow \bB_d$ be a holomorphic map. 
Fix a point $x_0 \in \partial \S$, and assume that $f$ extends to be $C^1$ on $\S \cup \{x_0\}$ and that $f(x_0) \in \partial \bB_d$. 
Then $f(\S)$ meets $\partial \bB_d$ transversally at $f(x_0)$. 
\end{prop}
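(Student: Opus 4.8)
The plan is to reduce the statement to the disc case already established in Proposition~\ref{P:transversal}. First note that transversality is the non-vanishing of $\ip{f(x_0),f'(x_0)}$ computed in any local holomorphic coordinate at $x_0$: changing the coordinate multiplies this quantity by the non-zero derivative of the coordinate change, so it suffices to verify non-vanishing in one convenient chart. Accordingly, I would choose a holomorphic boundary chart $\tau$ for $\S$ near $x_0$, that is, a homeomorphism of a relative neighbourhood $N$ of $x_0$ in $\S\cup\{x_0\}$ onto a relatively open subset $V\subseteq\ol{\mathbb{H}}$ of the closed upper half-plane, holomorphic on $N\cap\S$, with $\tau(x_0)=0$ and $\tau(N\cap\partial\S)\subseteq\bR$; such a chart is part of the structure of a finite (bordered) Riemann surface, so no analyticity of $\partial\S$ need be assumed. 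Setting $F=f\circ\tau^{-1}$, the map $F$ is holomorphic on $N'=V\cap\mathbb{H}$, extends to be $C^1$ up to $0$, and satisfies $\|F(0)\|=\|f(x_0)\|=1$. Since $F$ is holomorphic and $C^1$ up to the boundary, its complex derivative extends continuously to $0$, and transversality of $f$ at $x_0$ is precisely the assertion $\ip{F(0),F'(0)}\neq 0$.

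Next I would manufacture a genuine disc on which to invoke Proposition~\ref{P:transversal}. For small $\ep>0$ the closed disc $\ol{D'}=\{w:|w-i\ep|\le\ep\}$ is internally tangent to $\bR$ at the single point $0$, so that $\ol{D'}\setminus\{0\}\subset N'$. The affine map $\kappa(z)=i\ep(1-z)$ is a biholomorphism of $\bD$ onto the interior $D'$ with $\kappa(1)=0$ and $\kappa'\equiv-i\ep\neq0$. Then $g:=F\circ\kappa$ is holomorphic on $\bD$ with values in $\bB_d$, extends continuously to $z=1$ with $\|g(1)\|=\|F(0)\|=1$, and is differentiable at $1$ with $g'(1)=F'(0)\kappa'(1)=-i\ep\,F'(0)$, the composition being differentiable at $1$ because $\kappa$ is affine and $F$ is $C^1$ up to $0$. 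Thus $g$ satisfies the hypotheses of Proposition~\ref{P:transversal} at the boundary point $1$.

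Finally, Proposition~\ref{P:transversal} yields $\ip{g(1),g'(1)}>0$, and unwinding the inner product gives
\[ 0<\ip{g(1),g'(1)}=\ip{F(0),-i\ep\,F'(0)}=i\ep\,\ip{F(0),F'(0)}, \]
so $\ip{F(0),F'(0)}$ is a non-zero (purely imaginary) number. In particular $\ip{F(0),F'(0)}\neq0$, which is transversality of $f$ at $x_0$. The only genuine input to this reduction is the construction of the boundary chart $\tau$, immediate from the bordered-surface structure, together with the verification that $g=F\circ\kappa$ inherits differentiability at $1$ from the $C^1$ extension of $f$; I expect this last bookkeeping—the continuous extension of $F'$ to the boundary and the chain rule there—to be the main, though routine, obstacle, after which all of the geometry is supplied by the disc case.
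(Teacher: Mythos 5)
Your proof is correct, and it takes a genuinely different route from the paper's. Both arguments begin the same way, by invoking the bordered-surface structure to get a holomorphic boundary chart at $x_0$ (the paper cites Cohn for a parametrization by $\bD\cap U$ with $1\mapsto x_0$; you use a half-plane chart), and both correctly rely on the fact that non-vanishing of $\ip{f(x_0),f'(x_0)}$ is unaffected by the non-zero multiplicative factor coming from a holomorphic change of coordinate. After that the two proofs diverge. The paper rotates so that $f(x_0)=e_1$, looks at the real part $u$ of the first component of $h=f\circ g$, and applies Hopf's lemma to the harmonic function $u<1$ with $u(1)=1$ to get a strictly positive outward normal derivative, hence $\re\ip{h'(1),h(1)}>0$. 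You instead inscribe a disc $D'$ internally tangent to $\bR$ at $0$ inside the half-plane chart, reparametrize it by the affine map $\kappa(z)=i\ep(1-z)$, and feed $g=F\circ\kappa$ back into Proposition~\ref{P:transversal}; the conclusion $\ip{g(1),g'(1)}>0$ then unwinds (via conjugate-linearity in the second slot) to $\ip{F(0),F'(0)}\neq 0$, a non-zero purely imaginary number, which is exactly the half-plane incarnation of transversality. What your approach buys is economy of tools: it exhibits the Riemann-surface statement as a formal consequence of the already-proved disc case, with the inscribed tangent disc playing the role of the interior ball in the classical proof of Hopf's lemma, so no separate appeal to Hopf's lemma (or to harmonic function theory beyond what is already inside the Julia--Carath\'eodory argument of Proposition~\ref{P:transversal}) is needed. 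What the paper's approach buys is brevity at the chart level --- Hopf's lemma applies directly on the partial disc $\bD\cap U$ without any further reparametrization. The only points in your write-up that deserve the care you already flag are routine: that the $C^1$ extension of $f$ gives one-sided differentiability of $F$ at $0$ with a complex-linear differential (so that $F'(0)$ makes sense and the chain rule at the boundary point is valid), and that $\ol{D'}\setminus\{0\}$ sits inside the chart domain for small $\ep$. Both check out.
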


\begin{proof}
Let $U$ be a neighbourhood of $1 \in \bC$, and let $g$ be a biholomorphism 
from $U$ onto a neighbourhood $V$ of $x_0$ in the double of $\S$ that 
takes $1$ to $x_0$, $\bD \cap U$ to $\S \cap V$ and $\bT \cap U$ to $\partial \S \cap V$.
That such a local parametrization exists follows from Sections 11.2 and 11.3 in \cite{Cohn}. 
Assume without loss of generality that $f(x_0) = e_1 = (1,0 , \ldots, 0)$. 
Denote $W = \bD \cap U$. 
We now consider the map $h = f \circ g: W \to \bB_d$, and our goal is to prove that 
$\re \langle h'(1), h(1) \rangle = \re \langle h'(1), e_1 \rangle \neq 0$. 

To this end, write the first component of $h = (h_1, \ldots, h_d)$ as $h_1 = u + i v$, 
with $u$ and $v$ real and harmonic. 
Now $u$ is harmonic and strictly less than $1$ in $W$, while $u(1) = 1$. 
By Hopf's lemma (see \cite[Lemma 3.4]{GT} or \cite[Lemma 4.3.7]{Han}), 
the outward pointing directional derivative of $u$ at $1$ is positive: meaning simply that 
$\frac{\partial u}{\partial x}(1) > 0$. Thus 
\[
 \re \langle h'(1), h(1) \rangle = 
 \re (\frac{\partial u}{\partial x}(1) + i \frac{\partial v}{\partial x}(1)) = \frac{\partial u}{\partial x}(1) 
 > 0 ,
\]
as required. 
\end{proof}

Let us examine the geometric meaning of Corollary \ref{C:transversal}. For every $n$ (including $n=1$) the space $\bC^n$ carries the structure of a $2n$-dimensional real Hilbert space with inner product 
\[
\ip{u,v}_\bR = \re \ip{u,v} . 
\]

Let $f$ be as in the corollary, and let us assume for brevity that $f$ extends analytically to a disc $(1+\epsilon)\bD$. The derivative $f'(z)$ is a linear map from the complex tangent space of $\bC$ at $z$ (which can be identified with $\bC$) into the complex tangent space of $f((1+\epsilon)\bD)$ at $f(z)$ (which can be identified with a subspace of $\bC^d$ of complex dimension $1$). Every $z \in \partial \bD$ also serves as the outward pointing normal vector of the real submanifold $\partial \bD$ at the point $z$. The derivative $f'(z)$ maps $z$ to the vector $f'(z)z$. 

Intuitively, a curve $f(\ol{\bD})$ is transversal to $\partial \bB_d$ at $f(z)$ (for $z \in \partial \bD$) if the real valued inner product of the tangent vector to the curve at $f(z)$ with the outward pointing normal vector at $f(z)$ is positive. But since the outward pointing normal of $\partial \bB_d$ at $f(z)$ is (colinear with) $f(z)$, this boils down to the condition $\ip{f(z), f'(z)z}_\bR = \re \ip{f(z), f'(z)z} > 0$. Corollary \ref{C:transversal} gives slightly more information. 

The following proposition and corollary clarify further the geometric meaning of Proposition~\ref{P:transversal} and Corollary~\ref{C:transversal}. 

\begin{prop}\label{P:non-tangential}
Let $\phi$ be a differentiable map from the interval $[0,1]$ into the closed unit ball $B$ of a real Hilbert space  such that $\|\phi(1)\| = 1$ and $\ip{\phi'(1), \phi(1)} > 0$. Then for $x$ near $1$ 
\[
\|\phi(1) - \phi(x)\| \sim 1 - \|\phi(x)\| \sim 1 - x. 
\]
\end{prop}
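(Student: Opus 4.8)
The plan is to reduce everything to the first-order expansion of $\phi$ at the endpoint $x=1$ and then read off the three comparisons from it. I would write $p = \phi(1)$ and $v = \phi'(1)$, so that $\|p\| = 1$ and, by hypothesis, $\ip{v,p} > 0$. The first thing to record is that the transversality hypothesis already forces $v \neq 0$: by Cauchy-Schwarz, $\|v\| \ge \ip{v,p} > 0$. This is the only place where nondegeneracy of the tangent vector enters, and it is exactly what makes the comparison constants below strictly positive.

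I read $A \sim B$ here as \emph{comparability}: there are constants $0 < c \le C$ with $cB \le A \le CB$ for $x$ near $1$. (Asymptotic equality cannot be intended, since the three leading coefficients differ in general.) So the goal is to show that each of $\|\phi(1) - \phi(x)\|$ and $1 - \|\phi(x)\|$, divided by $1-x$, tends to a finite \emph{positive} limit as $x \to 1^-$ (the relevant one-sided limit, since $x=1$ is the right endpoint of the interval).

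For the first quantity I would use differentiability at $1$ directly: $\phi(x) = p + (x-1)v + o(1-x)$, hence $\phi(1) - \phi(x) = (1-x)v + o(1-x)$, and the triangle inequality gives
\[
 \lim_{x \to 1^-} \frac{\|\phi(1) - \phi(x)\|}{1-x} = \|v\| > 0 .
\]
For the second quantity I would expand the squared norm, noting that the cross term contributes $2(x-1)\ip{v,p}$ while the term $\|(x-1)v\|^2$ is $o(1-x)$:
\[
 \|\phi(x)\|^2 = 1 - 2(1-x)\ip{v,p} + o(1-x).
\]
Factoring $1 - \|\phi(x)\| = \dfrac{1 - \|\phi(x)\|^2}{1 + \|\phi(x)\|}$ and using $\|\phi(x)\| \to 1$ then yields
\[
 \lim_{x \to 1^-} \frac{1 - \|\phi(x)\|}{1-x} = \ip{v,p} > 0 .
\]
Both limits are finite and strictly positive, so all three of $\|\phi(1)-\phi(x)\|$, $1-\|\phi(x)\|$ and $1-x$ are pairwise comparable near $1$, which is the assertion.

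I do not expect a serious obstacle; the content is entirely in the first-order expansion. The one point that must not be skipped is the strict positivity of the two limiting ratios: $\|v\| > 0$ prevents the lower comparison for $\|\phi(1)-\phi(x)\|$ from failing, while $\ip{v,p} > 0$ (the transversality hypothesis itself) is precisely what keeps $1 - \|\phi(x)\|$ from being $o(1-x)$. If transversality were dropped, $\ip{v,p}$ could vanish and the middle comparison would break down, so this hypothesis is doing the real work.
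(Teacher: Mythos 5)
Your proof is correct and takes essentially the same route as the paper's: a first-order expansion of $\phi$ at $1$, comparison of $1-\|\phi(x)\|$ with $1-\|\phi(x)\|^2$, and the observation that the limiting ratios are $\|\phi'(1)\|$ and $\ip{\phi'(1),\phi(1)}$ respectively. You are in fact slightly more explicit than the paper in noting that $\phi'(1)\neq 0$ is forced by $\ip{\phi'(1),\phi(1)}>0$ via Cauchy--Schwarz.
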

Here we use the notation $a(x) \sim b(x)$ to mean $\lim_{x \rightarrow 1} \frac{a(x)}{b(x)} = c \in (0,\infty)$. 
\begin{proof}
By differentiability 
\[
\|\phi(1) - \phi(x)\| = \|\phi'(1)(x-1) + o(1-x)\| \sim 1-x, 
\]
since $\phi'(1) \neq 0$. Moreover $1-\|\phi(x)\| \sim 1 - \|\phi(x)\|^2$ and 
\[
1 - \|\phi(x)\|^2 = 1 - \|\phi(1)+\phi'(1)(x-1) + o(x-1)\|^2 = 2\ip{\phi'(1),\phi(1)}(1-x) + o(1-x), 
\]
and the latter is $\sim 1-x$. 
\end{proof}

\begin{cor} \label{C:non-tangential}
Suppose that $f$ is a proper analytic map of $\bD$ into a ball $\bB_d$,
and that $f$ extends to $\bD \cup \{1\}$ and is differentiable at $1$.
Then there exist $c > 0$ such that for all $x \in (0,1)$, 
\[
c \leq \frac{\dist(f(x), \partial \bB_n)}{\| f(1) - f(x) \|} = \frac{1 - \|f(x)\|}{\| f(1) - f(x) \|} \leq 1. 
\]
\end{cor}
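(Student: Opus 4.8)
The plan is to treat the two bounds separately: the upper bound $\le 1$ is elementary, while the lower bound rests on Proposition~\ref{P:non-tangential} together with the transversality inequality from Proposition~\ref{P:transversal}, supplemented by a compactness argument.

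First I would record the basic facts. Since $f$ is proper and extends continuously to $1$, we have $\|f(1)\| = 1$; and since $f$ maps $\bD$ into the open ball, $\|f(x)\| < 1$ for $x \in [0,1)$. The distance from a point of norm strictly less than $1$ to the unit sphere $\partial \bB_d$ is exactly $1 - \|f(x)\|$, which justifies the displayed equality. The upper bound is then immediate from the reverse triangle inequality,
\[ \dist(f(x), \partial \bB_d) = 1 - \|f(x)\| = \|f(1)\| - \|f(x)\| \le \|f(1) - f(x)\| , \]
so the quotient is at most $1$. Note also that $\|f(x)\| < 1 = \|f(1)\|$ forces $f(x) \ne f(1)$, so the denominator never vanishes and the quotient is a continuous, strictly positive function on $(0,1)$.

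For the lower bound I would first analyze the behaviour as $x \to 1$. Regard $\bC^d$ (or $\ltwo$) as a real Hilbert space under $\ip{u,v}_\bR = \re\ip{u,v}$, and consider the path $\phi(x) = f(x)$ on $[0,1]$, which is differentiable at $1$ with $\phi'(1) = f'(1)$. Since $f$ is proper, extends to $\bD \cup \{1\}$, and is differentiable at $1$, the final assertion of Proposition~\ref{P:transversal} gives $\ip{f(1), f'(1)} > 0$, whence $\ip{\phi'(1), \phi(1)}_\bR = \re \ip{f'(1), f(1)} > 0$. Proposition~\ref{P:non-tangential} then yields $\|f(1) - f(x)\| \sim 1 - \|f(x)\| \sim 1 - x$ as $x \to 1$; in particular the quotient $\tfrac{1 - \|f(x)\|}{\|f(1)-f(x)\|}$ converges to some $c_0 \in (0,\infty)$, so there is a $\delta > 0$ for which it is bounded below by $c_0/2$ on $(1-\delta, 1)$.

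It then remains to bound the quotient away from zero on $[0, 1-\delta]$. This set is compact, and the quotient is continuous and strictly positive there by the remarks above, so it attains a positive minimum $c_1$. Taking $c = \min(c_0/2, c_1)$ produces the uniform lower bound on all of $(0,1)$. The only genuinely substantial step is pinning down the positive limit at $x = 1$, and this has already been reduced to the transversality inequality of Proposition~\ref{P:transversal}; everything else is the reverse triangle inequality and a routine compactness argument.
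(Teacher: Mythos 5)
Your proof is correct and follows exactly the route the paper intends: the upper bound is the reverse triangle inequality, the behaviour near $x=1$ comes from Proposition~\ref{P:transversal} (giving $\ip{f'(1),f(1)}>0$) fed into Proposition~\ref{P:non-tangential}, and the rest is continuity and compactness on $[0,1-\delta]$. The paper states the corollary without proof as an immediate consequence of those two propositions, and your write-up supplies precisely the omitted details.
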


\section{Tangential embedding}  \label{S:tangential}

Following the discussion in the previous section we ask: can a proper biholomorphic embedding of the disc into the ball that extends continuously to the boundary meet the sphere tangentially? 
Proposition~\ref{P:transversal} shows that $\ip{f'(1),f(1)}$ is always bounded away from $0$, 
when $f$ extends to be differentiable at $1$. And the Julia-Caratheodory Theorem shows that 
differentiability (at least in the direction of $f(1)$) 
is equivalent to having a bounded differential quotient along some approach to the boundary point.
So a possible reformulation of a tangential condition might be that
\[ \lim_{x \to 1, x \in (0,1)} \frac{\re\ip{f(1) - f(x),f(1)}}{1-x} = +\infty .\] 

A different formulation is used in \cite{ARS}.
They suggest that the tangential condition should be 
\[
 \liminf_{x\to 1,\,x\in(0,1)} \frac{\dist(f(x), \partial \bB_n)}{\| f(1) - f(x) \|} = 
 \liminf_{x\to 1,\,x\in(0,1)} \frac{1 - \|f(x)\|}{\| f(1) - f(x) \|} = 0 .
\]
If this is an actual limit, this intuitively says that as $x$ approaches $1$ along the real axis, 
the curve $f(x)$ approaches the boundary much more quickly than it approaches $f(1)$, and hence
must approach $f(1)$ along a curve tangent to the boundary. 

Corollary~\ref{C:non-tangential} shows that if $f$ is holomorphic and differentiable at $1$,
then the curve $f(x)$ cannot approach $\partial \bB_d$ tangentially in either of these senses. 
We have been unable to determine a relationship between these two tangential conditions.

We now construct an example of a continuous proper embedding of a disc
into $\bB_2$ which meets the boundary tangentially in both of these senses.
Unfortunately we have been unable to determine whether the multiplier algebra is isomorphic to $\Hinf$.

\begin{eg}
The following is a modification of an example shown to us by Josip Globevnik.
There is a proper embedding $F$ of $\bD$ into $\bB_2$ which extends to be continuous
on $\ol{\bD}$ such that
\[ \lim_{x\to 1,\,x\in(0,1)} \frac{1 - \|F(x)\|}{\| F(1) - F(x) \|} = 0 ,\]
and
\[ \lim_{x\to 1,\,x\in(0,1)} \frac{\re \ip{F(1)-F(x), F(1)}} {1-x} = +\infty .\]

Let $A$ be the region in the upper half plane bounded by two semicircles 
in the upper half of the unit disc which are tangent at $1$, 
and have radii $r_1 = \frac12$ and $r_2 =\frac34$ 
together with the line segment $[-\frac12,0]$.
Let $f$ be a conformal map of $\bD$ onto $A$ such that $f(1)=1$. For definiteness, we may assume that 
$f(-i)=0$ and $f(i) = -\frac12$ . 

The map $f$ can be achieved by the following sequence of conformal maps. 
First apply the M\"obius map $w \to \frac{w+i}{iw+1}$ which takes $-i$ to 0, $i$ to $\infty$,
carries $\bD$ onto the upper half plane, takes $1$ to $1$, and is analytic in a neighbourhood of $1$.
Then take the square root map onto the first quadrant, followed by the M\"obius map
$w \to \frac{w-1}{w+1}$ which carries the quadrant onto the upper half disc. 
Call the composition of these maps $g$. 
Then $g$ maps the disc onto the upper half disc, $g$ takes $1$ to $0$, 
and is still analytic in a neighbourhood of $1$; and $g(\pm i) = \pm 1$. 
Now the standard branch of $\log$ (with cut along the negative imaginary axis) 
carries the region onto the half strip bounded by the negative real axis $(-\infty,0]$, 
the line segment $[0,\pi i]$ and half line $(-\infty, \pi i]$ parallel to the real line. 
Then take a final M\"obius map $w \to \frac{w-\pi i}{w+ 2\pi i}$. 
The composition of all these maps is the desired map $f$.

Observe that $f$ extends to a homeomorphism of $\ol{\bD}$ onto $\ol{A}$ and satisfies $f(1)=1$.
The map $g$ from $\bD$ to the half circle is conformal in a neighbourhood of $1$, so 
$g(e^{it}) \approx at$ where $g'(1) = -ia \ne 0$; in fact, $a = \frac14$.
Hence $\log g(e^{it}) \approx \log(at)$ for $t>0$ and $\log g(e^{it}) \approx \log(a|t|) + \pi i$ for $t<0$.
So we obtain that 
\[
 f(e^{it}) \approx \begin{cases}
                               \frac{ \log(at) - \pi i}{\log(at) + 2\pi i} &\FOR t>0\\[1ex]
                               \frac{ \log(a|t|) }{\log(a|t|) + 3\pi i} &\FOR t < 0 
                           \end{cases}
\]
Hence we may compute that
\[
 u(e^{it}) := \frac12 \log \big(1 - |f(e^{it})|^2 \big) \approx  -\log \log |t|^{-1} .
\]
In particular, $u$ is in $L^1(\bT)$. 

Fix $2/3 < r < 1$, and define $\rho(z) = rz + 1-r$. 
This maps $\bD$  onto a disc of radius $r$ tangent to $\bD$ at $1$.
Therefore $f_1(z) = f(\rho(z))$ maps $\bD$ conformally onto a region contained in $A$
which extends to be analytic on a neighbourhood of $\ol{\bD}\bsl\{1\}$.
It is still true that 
\[
 u_1(e^{it}) := \frac12 \log \big(1 - |f_1(e^{it})|^2 \big)
\]
belongs to $L^1$, but now it is $C^\infty$ except at $1$, where it goes to $-\infty$.
Hence $u_1$ extends to a real harmonic function on $\ol{\bD}$
which is smooth except at $1$, where it goes to $-\infty$. 
Let $\tilde u_1$ be its harmonic conjugate.
This is also smooth except at $1$.
Let $f_2(z) = e^{u_1 + i \tilde u_1}$.
Then $f_2$ extends to be continuous on $\ol{\bD}$ with $f_2(1) = 0$,
and $f_2$ is smooth except at $1$.

Now $|f_1(e^{it})|^2 + |f_2(e^{it})|^2 = 1$ on $\bT$.
It follows that $F(z) = (f_1(z), f_2(z))$ is a proper map of $\bD$ into $\bB_2$
that extends to be continuous on $\ol{\bD}$, and smooth except at $1$.
Since $f_1$ is conformal, $F$ is a biholomorphism of $\bD$ onto its image.

It is easy to see that as $z$ approaches $1$, $F(z)$ approaches $(1,0)$ tangentially.
Thus it follows that
\[
 \lim_{x\to 1,\,x\in(0,1)} \frac{1 - \|F(x)\|}{\| F(1) - F(x) \|} = 0 .
\]

A careful look at the estimates above shows that for $x \in (0,1)$,
\[
 f(1-x) \sim \frac{\log(ax) - \frac\pi 2 i}{\log(ax) + \frac{3\pi}2 i} 
 \sim (1 - \frac{c_1}{\log^2 x}) + i \frac{c_2}{\log x} .
\]
Hence
\[ \re \ip{ f(1) - f(x), f(1) } \sim \frac{c_1}{\log^2(1-x)} ,\]
so that
\[
  \lim_{x \to 1,\, x\in(0,1)}  \frac{\re \ip{F(1)-F(x), F(1)}} {1-x}= +\infty .
\]  
\end{eg}
\medskip

\section{Crossing on the boundary}  \label{S:crossing}

In this section, we will provide a method for constructing a smooth proper embedding of a disc into a ball
such that the multiplier algebra is not all of $\Hinf$.
The idea is to have the boundary cross itself.

\begin{thm} \label{T:notHinf}
Suppose that $f:\bD \to \bB_d$ is a proper analytic map which satisfies
\begin{enumerate}[label=\normalfont{(\arabic*)}]
 \item $f|_\bD$ is injective,
 \item $f$ extends to a differentiable map on $\bD \cup \{\pm1\}$, and
 \item $f(1)=f(-1)$.
\end{enumerate}
Suppose that $V = f(\bD)$ is a variety $($in the sense of \cite{DRS2}$)$.
Then $f^{-1} \not \in \M_V$. In particular, the embedding
\begin{equation*}
  \M_V \to H^\infty, \quad h \mapsto h \circ f,
\end{equation*}
is not surjective.
\end{thm}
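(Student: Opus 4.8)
The plan is to show that $f^{-1}$, viewed as a function on $V$, cannot be a multiplier by deriving a contradiction from the hypothesis that $f(1) = f(-1)$ together with the differentiability of $f$ at both $\pm 1$. The key observation is that if $f^{-1}$ were in $\M_V$, then it would extend to a bounded multiplier $h$ on the whole ball $\bB_d$ (since $\M_V$ is a complete quotient of $\M_d$ by the ideal of multipliers vanishing on $V$). Then $h \circ f = f^{-1} \circ f = \id_\bD$, the identity function $z$ on $\bD$. Now $h \circ f$ is a bounded analytic function on $\bD$ that equals $z$, so in particular it extends to be differentiable at $\pm 1$ with boundary values $h(f(1)) = 1$ and $h(f(-1)) = -1$. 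But $f(1) = f(-1)$, so $h$ would have to take two different values at the single point $f(1) = f(-1)$, which is the desired contradiction --- \emph{provided} one can make sense of evaluating the multiplier $h$ at the boundary point $f(1)$ and show the two one-sided boundary limits of $h \circ f$ at $\pm 1$ must both be genuine values of $h$ at $f(1)$.

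The first step I would carry out is to reduce to the statement about $f^{-1}$ itself: if the embedding $h \mapsto h \circ f$ were surjective, then the function $z \in \Hinf$ would have a preimage, i.e. there would be $g \in \M_V$ with $g \circ f(z) = z$ for all $z \in \bD$; this forces $g = f^{-1}$ on $V$. So it suffices to prove $f^{-1} \notin \M_V$. Next I would suppose for contradiction that $f^{-1} \in \M_V$ and lift it to a bounded analytic function $h$ on $\bB_d$ with $h|_V = f^{-1}$; then $h \circ f = \id_\bD$ on $\bD$.

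The heart of the argument, and the step I expect to be the main obstacle, is controlling the boundary behaviour: I need to show that the composite $h \circ f$, which extends continuously (indeed equals $z$) up to $\pm 1$, forces $h$ to be discontinuous at the common image point $p := f(1) = f(-1) \in \partial \bB_d$. Because $f$ is differentiable at $\pm 1$ and meets the boundary transversally there (by Proposition~\ref{P:transversal}, $\ip{f(1),f'(1)} > 0$, and similarly at $-1$), the curves $x \mapsto f(x)$ as $x \to 1^-$ and $x \mapsto f(-x)$ as $x \to 1^-$ both approach $p$ non-tangentially, along two transversal directions. A bounded analytic function $h$ on the ball has, by a Julia--Carath\'eodory / Lindel\"of-type argument, a well-defined non-tangential boundary limit at $p$ along such approaches, and that limit should be independent of the (non-tangential) direction of approach. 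But the first approach gives $\lim_{x \to 1^-} h(f(x)) = \lim_{x\to 1^-} x = 1$, whereas the second gives $\lim_{x \to 1^-} h(f(-x)) = \lim_{x \to 1^-} (-x) = -1$. These two limits disagree, contradicting the existence of a single non-tangential boundary value for the bounded analytic function $h$ at $p$.

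To make the Lindel\"of step rigorous I would invoke Corollary~\ref{C:non-tangential}, which guarantees that along each of the two real-axis approaches the distance $\dist(f(x),\partial \bB_d)$ is comparable to $\|f(1) - f(x)\|$ (and likewise at $-1$); this is precisely the statement that $f$ approaches $p$ non-tangentially from within the ball along both curves, which is what a Lindel\"of theorem for $\Hinf(\bB_d)$ requires in order to conclude that the radial/non-tangential cluster value is unambiguous. The transversality furnished by Corollary~\ref{C:transversal} is what rules out a degenerate tangential approach that could otherwise allow the two limits to differ. Assembling these pieces yields the contradiction and hence $f^{-1} \notin \M_V$, from which the non-surjectivity of $h \mapsto h \circ f$ follows immediately.
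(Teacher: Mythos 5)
Your proposal takes a genuinely different route from the paper, and it also contains a misstated key step that needs repair. The paper's proof never leaves $V$ and uses no boundary-value theory: it expands $f$ to first order at $\pm 1$, invokes the automatic transversality $\ip{f'(1),f(1)}>0$ and $\ip{f'(-1),f(-1)}<0$ from Proposition~\ref{P:transversal} to choose a scale $s>0$ balancing the two derivatives, and shows that the points $f(1-x)$ and $f(-1+sx)$ coalesce in the pseudohyperbolic metric of the ball as $x\to 0^+$, while their preimages $1-x$ and $-1+sx$ stay pseudohyperbolically separated. Writing $h=f^{-1}/C$, where $C$ is the multiplier norm of $f^{-1}$, and imposing positivity of the $2\times 2$ Pick matrix at these two points then yields $(C^2+1)^2\le (C^2-1)^2$ in the limit, a contradiction. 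This argument is quantitative, works verbatim for $d=\infty$, and is reused in Section~\ref{S:pseudo}.

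Your route --- lift $f^{-1}$ to $h\in\Hinf(\bB_d)$ and show that $h$ would need two distinct boundary values at $p=f(1)=f(-1)$ --- is conceptually attractive, but the assertion that ``a bounded analytic function $h$ on the ball has \ldots a well-defined non-tangential boundary limit at $p$'' is false: bounded analytic functions need not have any limit at a prescribed boundary point. What your argument actually requires is the Lindel\"of--\v{C}irka theorem (see \cite[Chapter 8]{Rudin}): if $h\in\Hinf(\bB_d)$ with $d<\infty$ has a limit along one restricted (special) curve ending at $p$, then it has the same limit along every such curve. Since $h\circ f(1-x)=1-x$ and $h\circ f(-1+y)=-1+y$ do converge, and since differentiability plus transversality (Corollary~\ref{C:non-tangential}) makes both image curves non-tangential --- which in turn forces them to be special with non-tangential projection onto the slice $\bC p$ --- the theorem applies and produces the contradiction $1=-1$. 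So your argument can be completed for finite $d$, provided you quote the uniqueness statement in its correct conditional form and verify the admissibility of the two curves. Two further caveats: the lifting to the whole ball is unnecessary (the Pick property of $\M_V$ itself suffices, as the paper's proof shows), and \v{C}irka's theorem is not available off the shelf for $d=\infty$, so as written your proof does not cover the infinite-dimensional case that the paper's does.
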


\begin{proof}
We first make some first order estimates in order to approximate
the kernel functions near $\pm1$.
By Proposition~\ref{P:transversal}, we have $\ip{f'(1),f(1)} > 0$.
Furthermore, differentiability of $f$ at $1$ implies that for small $x > 0$, we have
\[  f(1-x) = f(1) - x f'(1) + o(|x|) .\]
Hence
\begin{align*}
 1-\| f(1-x)\|^2 &= \|f(1)\|^2 - \|f(1-x)\|^2 \\
 &= \ip{f(1), f(1) - f(1-x)} + \ip{x f'(1) + o(|x|), f(1) - x f'(1) + o(|x|) }\\
 &= 2  x\ip{f'(1), f(1)} + o(|x|) .
\end{align*}
Similarly, $\ip{f'(-1),f(-1)} < 0$; and for small $y$ with $y > 0$,
\[
 f(-1+y) = f(-1) + y f'(-1) + o(y) 
\]
and
\[ 
 1-\| f(-1+y)\|^2 = -2 y\ip{f'(-1), f(-1)} + o(y) .
\]

Likewise, for small positive values of $x$ and $y$, we obtain (using $f(1) = f(-1)$)
\begin{align*}		
 1 &- \ip{f(1-x), f(-1+y)} \\
 &= 1 - \ip{f(1) - x f'(1) + o(x), f(-1) + y f'(-1) +o(y)}  \\
 &= 1 - \ip{f(1), f(-1)} - \ip{ f(1), yf'(-1)} + \ip{xf'(1) , f(-1) } + o(x+y) \\
 &= x\ip{f'(1), f(1)} - y\ip{f'(-1),f(-1)} + o(x+y) .
\end{align*}

Choose the positive scalar $s$ so that 
\[ 0 < a := \ip{f'(1),f(1)} = -s \ip{f'(-1),f(-1)} .\]
Then set $y=sx$.
We have that
\begin{align*}\label{Eq1} \tag{\dag}
 \frac{(1-\|f(1-x)\|^2)(1-\|f(-1+sx)\|^2)}{\big| 1- \ip{f(1-x), f(-1+sx)} \big|^2} 
 &= \frac{(2ax + o(x))(2ax + o(x))}{(2ax + o(x))^2} 
 = 1 + o(1) .
\end{align*}

Let $\|h\|_M$ denote the multiplier norm in $\M_V$. 
Assume for a contradiction that $f^{-1}$ is a multiplier. Set $C = ||f^{-1}||_M$ and
$h = f^{-1} / C$, so that
$\|h\|_M = 1$.

We apply the Pick condition to this $h$ at the points $\{f(1-x),f(-1+sx)\}$:
\begin{align*}
 0 &\le 
 \begin{bmatrix}
  \frac{1 - |h(f(1-x))|^2}{1-\| f(1-x)\|^2} & \frac{1 - h(f(1-x)) \ol{h(f(-1+sx))}}{1 - \ip{f(1-x), f(-1+sx)}} \\[2ex]
  \frac{1 - h(f(-1+sx)) \ol{h(f(1-x))}}{1 - \ip{f(-1+sx), f(1-x)}} & \frac{1 - |h(f(-1+sx))|^2}{1-\| f(-1+sx)\|^2}
 \end{bmatrix}
 \\[1ex] &=
 \begin{bmatrix}
  \frac{1 - C^{-2}(1-x)^2}{1-\| f(1-x)\|^2} & \frac{1 + C^{-2} (1-x)(1-sx)}{1 - \ip{f(1-x), f(-1+sx)}} \\[2ex]
  \frac{1 + C^{-2} (1-x)(1-sx)}{1 - \ip{f(-1+sx), f(1-x)}} & \frac{1 - C^{-2} (1-sx)^2}{1-\| f(-1+sx)\|^2}
 \end{bmatrix}.
\end{align*}
Hence the determinant is positive. Clearing the denominators yields
\begin{align*} 
 \big(C^2 +&\, (1 - x)(1 - sx) \big)^2  \big(1 - \| f(1-x)\|^2\big) \big( 1 - \| f(-1+sx)\|^2\big) \\&\le
 \big(C^2 - (1-x)^2)(C^2 - (1-sx)^2\big) \big|1 - \ip{f(-1+sx), f(1-x)} \big|^2 . 
\end{align*}
Using the estimate from (\ref{Eq1})  and letting $x$ decrease to $0$, we obtain
\[
 (C^2 + 1)^2  \le (C^2 - 1)^2   . 
\]
As this is false, we deduce that $f^{-1} \not \in \M_V$.
\end{proof}

Now we show that a map with these properties can be obtained.

\begin{thm} \label{T:embed}
There is a a rational function $f$ with poles off $\ol{\bD}$ and values in $\bC^2$  which
satisfies the conditions of Theorem~$\ref{T:notHinf}$, meets $\partial \bB_2$
transversally, and is one-to-one except for the fact that $f(-1)=f(1)$, and
so that $f$ is a biholomorphism.
Then $V = f(\bD)$ is a variety $($in the sense of \cite{DRS2}$)$ such that 
$\M_V \subsetneqq \Hinf(V)$.
In particular, $f^{-1}$ is not a multiplier.
\end{thm}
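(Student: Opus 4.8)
The plan is to construct a rational map $f$ into $\bC^2$ that satisfies all the hypotheses of Theorem~\ref{T:notHinf} (proper, analytic, injective on $\bD$, differentiable at $\pm1$, with $f(1)=f(-1)$), together with the additional properties that $V=f(\bD)$ is a variety and that $f$ is a biholomorphism onto its image. Once such an $f$ is exhibited, the conclusion $\M_V \subsetneqq \Hinf(V)$ is almost immediate: by Theorem~\ref{T:notHinf}, $f^{-1}\notin\M_V$, whereas by Corollary~\ref{C:Hinf(V)} (applied to the biholomorphism $f:\bD\to V$), the algebra $\Hinf(V)$ equals $\{h\circ f^{-1}:h\in\Hinf\}$, which manifestly contains $f^{-1}$ (taking $h=\id_\bD$). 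Thus the embedding $\M_V\hookrightarrow\Hinf$ is proper. So the real content is the construction.

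For the construction, I would look for $f=(f_1,f_2)$ where each $f_i$ is rational with poles off $\ol{\bD}$, so that $f$ is automatically analytic on a neighborhood of $\ol{\bD}$ (hence $C^\infty$ up to the boundary, giving differentiability at $\pm1$ and transversality via Corollary~\ref{C:transversal}). The properness condition $\|f(z)\|=1$ on $\bT$ is the key analytic constraint: I would seek $f_1,f_2$ so that $|f_1(z)|^2+|f_2(z)|^2\equiv 1$ on $\partial\bD$. A natural device is to take $f_1$ to be a Blaschke-type or low-degree rational inner-like expression and solve for $f_2$ so that $|f_2|^2=1-|f_1|^2$ on the circle; writing $1-|f_1(z)|^2$ as $|g(z)|^2$ on $\bT$ for a suitable outer rational $g$ (a factorization problem on the circle, solvable because $1-|f_1|^2$ is a nonnegative trigonometric rational) produces the second coordinate. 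To force $f(1)=f(-1)$ while keeping $f$ injective on the open disc and on the rest of the boundary, I would arrange the geometry so that the boundary curve $f(\bT)$ crosses itself exactly once, precisely at the common value $f(1)=f(-1)$; this is a finite set of algebraic conditions on the coefficients that can be met by direct choice.

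To verify that $V=f(\bD)$ is a variety in the sense of \cite{DRS2}, I would use that $f$ is rational and proper: the image of a proper rational map of the disc into $\bB_d$ is cut out by polynomial (hence multiplier) relations, since eliminating the parameter $z$ from the rational parametrization $w=f(z)$ yields polynomial equations $p(w)=0$ satisfied exactly on $V$ together with its boundary. Concretely, $V$ is the intersection with $\bB_2$ of the zero set of the ideal generated by the polynomials obtained from the resultant of the defining rational relations, so $V=V(\{p_i\})\cap\bB_2$ is a variety. That $f$ is a biholomorphism of $\bD$ onto $V$ follows from injectivity on $\bD$ together with $f'(z)\neq 0$ on $\bD$ (which I would check directly for the chosen rational map, or arrange by construction) via Proposition~\ref{P:holomorphic}.

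I expect the main obstacle to be the simultaneous satisfaction of the properness identity $\|f\|\equiv 1$ on $\bT$ and the self-crossing condition $f(1)=f(-1)$ while retaining injectivity everywhere else and $f'\neq0$ on $\bD$; these pull in competing directions, since forcing the boundary to return to a single point typically threatens injectivity on a full neighborhood of that return. The cleanest route is likely to start from the tangential example of Section~\ref{S:tangential} or a Globevnik-style construction, modify it to be rational, and then introduce a controlled single transversal crossing by a small rational perturbation, checking that the perturbation preserves $\|f\|=1$ on $\bT$ exactly (not merely approximately) — this exactness is what the $|f_1|^2+|f_2|^2=1$ factorization buys. Verifying that only the points $\pm1$ are identified, and that the crossing is transversal rather than tangential, will require a careful but finite local analysis near the crossing point.
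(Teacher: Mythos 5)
There is a genuine gap: the theorem is an existence statement, and your proposal never actually produces the rational map. You correctly reduce the conclusion to Theorem~\ref{T:notHinf} (and Corollary~\ref{C:Hinf(V)}), and you correctly identify the constraints a witness must satisfy, but everything from ``solve a factorization problem on the circle'' through ``introduce a controlled single transversal crossing by a small rational perturbation'' is a program, not a proof. Indeed you flag the central difficulty yourself --- that forcing $\|f\|\equiv 1$ on $\bT$ exactly while creating exactly one boundary self-intersection and preserving injectivity elsewhere ``pull in competing directions'' --- and you leave that difficulty unresolved. A small perturbation of a proper map will generically destroy the identity $|f_1|^2+|f_2|^2=1$ on $\bT$, and re-running the spectral factorization after the perturbation changes $f_2$ globally, so there is no reason the crossing survives or that injectivity off $\{\pm 1\}$ is retained. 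Without a concrete $f$ and a verification of each hypothesis of Theorem~\ref{T:notHinf}, the theorem is not proved.

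The paper resolves this with one explicit formula that makes all the competing constraints trivial simultaneously: with $b(z)=\frac{z-r}{1-rz}$ for a fixed $0<r<1$, set $f(z)=\frac1{\sqrt2}\bigl(z^2,\,b(z)^2\bigr)$. Properness is automatic because each coordinate is ($\frac{1}{\sqrt2}$ times) a finite Blaschke product, so $\|f\|^2=\frac12(1+1)=1$ on $\bT$ --- no factorization step is needed. The first coordinate forces any identification $f(w)=f(z)$ to have $w=\pm z$, and the second coordinate then rules out $w=-z$ except at $z\in\{0,\pm1\}$, using $b(\pm1)=\pm1$; so $f(1)=f(-1)$ is the only failure of injectivity. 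Nonvanishing of $f'$ is a one-line check ($2z$ vanishes only at $0$, where $2b(0)b'(0)=-2r(1-r^2)\neq 0$), transversality follows from Corollary~\ref{C:transversal} since $f$ is analytic across $\ol{\bD}$, and $V=f((1+\ep)\bD)\cap\bB_2$ is a variety by \cite{KMS} --- which is also a cleaner route than your elimination-theoretic argument, since the zero set of the eliminated polynomials could a priori contain extra branches inside the ball beyond $f(\bD)$.
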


\begin{proof}
Fix $0 < r < 1$, and let 
\[ b(z) = \frac{z-r}{1-rz} .\]
Note that $b(\pm 1) = \pm 1$. Define
\[
 f(z) = \frac1{\sqrt2} \big( z^2, b(z)^2 \big) .
\]
Then it is clear that $f$ is a rational function with poles off of $\ol{\bD}$.
Since $z$ and $b(z)$ are automorphisms of the disc, it is easy to see that 
$\|f(z)\|=1$ when $|z|=1$.
So $f(\bD)$ is contained in the ball $\bB_2$.

Since $f$ is analytic on a disc $(1+\ep)\bD$ for some $\ep>0$ and 
\[ V = f((1+\ep)\bD) \cap \bB_2 ,\]
it follows that $V$ is a variety \cite{KMS}.
By Proposition~\ref{P:transversal}, $V$ meets the boundary transversally at every point.

Note that the first coordinate of $f(z)$ is $z^2/\sqrt2$.
Hence if $f(w)=f(z)$, we have $w=\pm z$. 
So equality implies that $b(-z)^2 = b(z)^2$, which
is easily seen to have solutions $z \in\{0,\pm1\}$.
Thus $f(-1)=f(1)$ is the only failure to be one-to-one.
Moreover, 
\[ f'(z) = (2z,2 b(z) b'(z)) \]
is never zero since the first coordinate vanishes only at $z=0$, 
while
\[ 2b(0)b'(0) = -2r(1-r^2) \ne 0 .\]
So this map is a biholomorphism.
It is now clear that the hypotheses of Theorem~\ref{T:notHinf} are satisfied.
Therefore, $\M_V \subsetneqq \Hinf(V)$ and indeed, $f^{-1}$ is not a multiplier.
\end{proof}

\begin{rem}
The fact that $f^{-1}$ is not a multiplier means that this approach will not yield
counterexamples to the converse of Theorem~\ref{T:algiso}. 
\end{rem}

Corollary~\ref{C:mob_inv_iso} and the automorphism invariance of $\Hinf$
yields the following consequence.

\begin{cor}
For $V$ given in Theorem~$\ref{T:embed}$, $\M_V$ is not isomorphic to $\Hinf$.
\end{cor}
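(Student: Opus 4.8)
The plan is to argue by contradiction, using Corollary~\ref{C:mob_inv_iso} to pin down the \emph{only} isomorphism that could exist, and then to invoke Theorem~\ref{T:embed} to rule it out. The key observation is that all of the real work has already been done: Theorem~\ref{T:embed} exhibits $V$ as a disc variety in $\bB_2$ for which the natural composition embedding $h \mapsto h \circ f$ fails to be surjective, and Corollary~\ref{C:mob_inv_iso} says that when one of the two algebras is automorphism invariant, any isomorphism must in fact be implemented by precisely this natural map.

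Concretely, I would apply Corollary~\ref{C:mob_inv_iso} with $V_1 = V$ and $f_1 = f$ (so that $\M_{V_1} = \M_V$), comparing against $\Hinf = \M_{\bD}$. Since $V \subset \bB_2$ with $d = 2 < \infty$, conditions (1) and (2) of Theorem~\ref{T:iso_discs} hold automatically for $V_1$ by the Remark following that theorem. Moreover $\Hinf$ is automorphism invariant, its automorphisms being exactly composition with the M\"obius maps of $\bD$ (Kakutani's theorem). Thus the hypotheses of Corollary~\ref{C:mob_inv_iso} are satisfied. Suppose now, toward a contradiction, that $\M_V$ is isomorphic to $\Hinf$. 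The final clause of the corollary forces the natural map $C_f : \M_V \to \Hinf$, $h \mapsto h \circ f$, to implement the isomorphism, and in particular to be surjective. But Theorem~\ref{T:embed} (through Theorem~\ref{T:notHinf}) asserts exactly that this embedding is not surjective, because $f^{-1} \notin \M_V$ while the coordinate function $z \in \Hinf$ has no preimage in $\M_V$. This contradiction yields the result.

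Since the two hard inputs---non-surjectivity of $C_f$ and automorphism invariance of $\Hinf$---are already available, there is essentially no new obstacle to overcome. The only point demanding care is to orient the application of Corollary~\ref{C:mob_inv_iso} correctly: $V$ (rather than $\bD$) must play the role of $V_1$ satisfying conditions (1) and (2), so that it is the automorphism invariance of the \emph{target} $\Hinf$ that is used, and the forced isomorphism is the non-surjective map $C_f$ itself rather than its putative inverse.
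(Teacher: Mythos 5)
Your proof is correct and follows exactly the route the paper intends: the paper derives this corollary in one line from Corollary~\ref{C:mob_inv_iso} together with the automorphism invariance of $\Hinf$, which forces any isomorphism to be the natural map $C_f$, contradicting the non-surjectivity established in Theorem~\ref{T:embed}. Your additional care about which algebra plays the role of $\M_{V_1}$ (so that conditions (1) and (2) hold automatically since $d=2<\infty$) is a correct and worthwhile clarification of a detail the paper leaves implicit.
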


\section{Pseudohyperbolic distance}  \label{S:pseudo}

Recall that in the ball $\bB_d$, we can define the pseudohyperbolic distance
\[ d(z,w) = \| \phi_w(z) \| = \| \phi_z(w) \| \]
where $\phi_w$ is the conformal automorphism of $\bB_d$ onto itself interchanging
the points $w$ and $0$ given by
\[ \phi_w(z) = \frac{ w - P_w z - (1-\|w\|^2)^{1/2} P_w^\perp z}{1-\ip{z,w}} \]
where $P_w$ is the orthogonal projection of $\bC^d$ onto $\bC w$.
By \cite[Theorem 2.2.2(iv)]{Rudin}, 
\[
 d(z,w)^2 = 1 - \frac{(1-\|w\|^2)(1-\|z\|^2)}{|1-\ip{w,z}|^2} = 1 - |\ip{\nu_w,\nu_z}|^2
\]
where $\nu_w = (1-\|w\|^2)^{1/2} k_w$ is the normalized reproducing kernel for Drury-Arveson space.

The Schwarz lemma \cite[Theorem 8.1.4]{Rudin} in this context states that if $F$ is a holomorphic map of
$\bB_d$ into $\bB_e$, then
\[ d(F(z), F(w)) \le d(z,w) .\]

\begin{lem}
For every $\lambda, \mu \in V$, 
\[
 d(\lambda,\mu) \leq \|\rho_\lambda - \rho_\mu\| \leq 2 d(\lambda,\mu) .
\]
\end{lem}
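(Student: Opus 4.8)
The plan is to prove the two inequalities separately, using the fact that $\rho_\lambda, \rho_\mu$ are characters (hence point evaluations) on $\M_V$, and relating the norm of their difference to the action on the reproducing kernels $k_\lambda, k_\mu$ in $\H_V$.

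For the \textbf{lower bound} $d(\lambda,\mu) \le \|\rho_\lambda - \rho_\mu\|$, I would exploit the universal multiplier $f^{-1}$ or, more robustly, directly test $\rho_\lambda - \rho_\mu$ against a well-chosen contractive multiplier. The natural candidate is a multiplier $h$ with $\|h\|_M \le 1$ that separates $\lambda$ and $\mu$ as much as the pseudohyperbolic distance allows. Since $\M_V$ is a quotient of $\M_d$ by the ideal vanishing on $V$, and since $\M_d$ is a complete Nevanlinna-Pick algebra, the Pick interpolation theorem guarantees a contractive multiplier $h$ with $h(\lambda)=0$ and $h(\mu)=d(\lambda,\mu)$: indeed the two-point Pick matrix with these values is precisely the one whose positivity is equivalent to $|h(\mu)|\le d(\lambda,\mu)$ via the identity $d(\lambda,\mu)^2 = 1 - |\ip{\nu_\lambda,\nu_\mu}|^2$ recorded just above. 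Then $\|\rho_\lambda - \rho_\mu\| \ge |(\rho_\lambda - \rho_\mu)(h)| = |h(\lambda) - h(\mu)| = d(\lambda,\mu)$.

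For the \textbf{upper bound} $\|\rho_\lambda - \rho_\mu\| \le 2 d(\lambda,\mu)$, I would pass to the predual picture: each $\rho_v$ corresponds, via the weak-$*$ continuity of point evaluations, to the normalized kernel $\nu_v = (1-\|v\|^2)^{1/2} k_v$, which is a unit vector in $\H_V$ (hence in $\H_d$). The point evaluation functional on $\M_V$ factors through the vector functional $\phi \mapsto \ip{\phi\, k_v, k_v}/\|k_v\|^2 = \ip{\phi\,\nu_v, \nu_v}$. Writing $(\rho_\lambda - \rho_\mu)(\phi) = \ip{\phi\,\nu_\lambda,\nu_\lambda} - \ip{\phi\,\nu_\mu,\nu_\mu}$, I would bound this by the difference of the rank-one states $\nu_\lambda \otimes \nu_\lambda$ and $\nu_\mu \otimes \nu_\mu$ in trace norm, or more simply estimate directly: the norm of the difference of two vector states given by unit vectors $u,w$ is at most $2\|u-w\|$, and $\|\nu_\lambda - \nu_\mu\|$ can be controlled by $d(\lambda,\mu)$ using $|\ip{\nu_\lambda,\nu_\mu}|^2 = 1 - d(\lambda,\mu)^2$. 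After adjusting $\nu_\mu$ by a unimodular phase to maximize $\re\ip{\nu_\lambda,\nu_\mu}$ (which does not change the vector state), one gets $\|\nu_\lambda - \nu_\mu\|^2 = 2(1-|\ip{\nu_\lambda,\nu_\mu}|) \le 2 d(\lambda,\mu)^2$ since $1 - \sqrt{1-t^2} \le t^2$ for $t \in [0,1]$ would need checking; more safely $1-\sqrt{1-t^2}\le t$, giving the factor $2$.

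The main obstacle I anticipate is the \textbf{upper bound constant}, specifically squeezing out exactly the factor of $2$ rather than something larger. The clean route is to observe that for unit vectors with inner product of modulus $c = |\ip{\nu_\lambda,\nu_\mu}| = \sqrt{1-d(\lambda,\mu)^2}$, the operator-norm distance between the vector states equals $\sqrt{1-c^2} = d(\lambda,\mu)$ (this is the standard formula for the trace-norm/fidelity relation between pure states, giving distance $2\sqrt{1-c^2}$ in trace norm but the functional norm on the multiplier algebra only sees a subspace). I would therefore verify carefully whether the correct elementary bound $\|\rho_\lambda-\rho_\mu\| \le 2\|\nu_\lambda - \nu_\mu\|$ combined with $\|\nu_\lambda-\nu_\mu\| \le d(\lambda,\mu)$ suffices; the phase-optimization step and the inequality $\sqrt{1-c^2} \le \sqrt{2}\sqrt{1-c}$ are the delicate pieces, and getting the constant to be exactly $2$ rather than $2\sqrt2$ is where the argument must be done with care.
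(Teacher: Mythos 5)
Your proposal is correct in substance but takes a genuinely different route from the paper on both halves, so let me compare. For the lower bound the paper applies a ball automorphism to move $\mu$ to $0$ and then tests against the coordinate functional $f(z)=\ip{z,\lambda/|\lambda|}$; your two-point Pick interpolation argument producing a contractive $h$ with $h(\lambda)=0$, $h(\mu)=d(\lambda,\mu)$ is valid (the determinant condition on the $2\times 2$ Pick matrix is exactly $|h(\mu)|^2\le d(\lambda,\mu)^2$) and in fact avoids the automorphism-invariance step entirely; the paper's coordinate functional is just the extremal Pick solution after the change of variables. For the upper bound the paper argues much more elementarily: by the Schwarz--Pick lemma, any multiplier $f$ with $\|f\|_{\M_V}\le 1$ satisfies $|f(\lambda)-f(\mu)|\le d(\lambda,\mu)\,|1-f(\lambda)\ol{f(\mu)}|\le 2\,d(\lambda,\mu)$, and one takes the supremum over the unit ball. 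Your vector-state route also works, and your worry about the constant is resolvable: since $\rho_v(\phi)=\ip{M_\phi\nu_v,\nu_v}$, the functional $\rho_\lambda-\rho_\mu$ is the restriction to $\M_V$ of the trace-class functional associated with $\nu_\lambda\otimes\nu_\lambda^*-\nu_\mu\otimes\nu_\mu^*$, a rank-two trace-zero self-adjoint operator with eigenvalues $\pm\sqrt{1-|\ip{\nu_\lambda,\nu_\mu}|^2}$, hence of trace norm exactly $2\sqrt{1-|\ip{\nu_\lambda,\nu_\mu}|^2}=2\,d(\lambda,\mu)$; restriction to a subalgebra only decreases the norm, so the factor $2$ comes out on the nose. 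You should commit to that formula rather than to the fallback estimate $2\|\nu_\lambda-\nu_\mu\|$, which (after phase optimization) only yields $2\sqrt{2(1-|\ip{\nu_\lambda,\nu_\mu}|)}\le 2\sqrt2\,d(\lambda,\mu)$ and loses the stated constant. What each approach buys: the paper's Schwarz--Pick argument is shorter and needs nothing beyond the one-variable Schwarz--Pick inequality for bounded multipliers, while your argument isolates the exact predual/trace-norm mechanism behind the constant $2$ and generalizes verbatim to any reproducing kernel multiplier algebra with the Pick property.
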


\begin{proof}
The inequality $\|\rho_\lambda - \rho_\mu\| \leq 2 d(\lambda,\mu)$ was observed in \cite[Lemma 5.3]{DRS2}. 
For completeness, by the Schwarz lemma 
\[
\left|\frac{f(\lambda) - f(\mu)}{1 - f(\lambda)\ol{f(\mu)}} \right| \leq d(\lambda, \mu) ,
\]
for all $f$ with $\|f\| \leq 1$ and it follows that 
\[
 \|\rho_\lambda - \rho_\mu\| \leq 
 d(\lambda,\mu) \sup_{\|f\| \leq 1} |1 - f(\lambda) \ol{f(\mu)}| 
 \leq 2 d(\lambda, \mu) . 
\]
For the lower bound we may assume (applying an automorphism $\phi$ 
and replacing $V$  with $\phi(V)$, this induces an isometry on the characters 
as well as on the points) that $\mu = 0$. 
But then clearly $f(z) = \langle z, \lambda/|\lambda| \rangle$ is a multiplier of norm $1$ 
such that 
\[ |\rho_\lambda(f) - \rho_0(f)| = |\lambda| = d(\lambda,\mu) . \qedhere \]
\end{proof}

Note that a biholomorphism $F: W \to V$, being a homeomorphism, is automatically proper.

\begin{thm}\label{T:pseudo_gen}
Suppose that $\phi : \M_V \rightarrow \M_W$ is an isomorphism induced by 
a biholomorphism $F : W \rightarrow V$. 
Then there are constants $c,C>0$ such that 
\[
 c\,d(\lambda, \mu) \leq d(F(\lambda), F(\mu)) \leq C\,d(\lambda, \mu)
 \qforal \lambda, \mu \in W. 
\]
\end{thm}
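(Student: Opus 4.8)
The plan is to use the isomorphism $\phi$ together with the lemma just proved, which sandwiches the pseudohyperbolic distance between the character distance $\|\rho_\lambda - \rho_\mu\|$ up to a factor of $2$. Since $\phi$ induces the map $\phi^*$ on characters and $F = \phi^*|_W$ by Theorem~\ref{T:iso_discs}, the key observation is that point evaluations transport correctly: for $\lambda \in W$, the character $\rho_\lambda$ on $\M_W$ satisfies $\phi^*(\rho_\lambda) = \rho_{F(\lambda)}$ as a character on $\M_V$. The whole argument then reduces to comparing the norm $\|\rho_{F(\lambda)} - \rho_{F(\mu)}\|$ on $\M_V$ with the norm $\|\rho_\lambda - \rho_\mu\|$ on $\M_W$, and the bridge between these two is the bounded linear map $\phi$ and its inverse.

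First I would record that $\phi^*(\rho_\lambda) = \rho_{F(\lambda)}$. This is essentially the content of the formula $\phi(h) = h \circ F$ from Theorem~\ref{T:iso_discs}: for any $h \in \M_V$,
\[
 \rho_{F(\lambda)}(h) = h(F(\lambda)) = (\phi(h))(\lambda) = \rho_\lambda(\phi(h)) = (\phi^*(\rho_\lambda))(h).
\]
Next I would use that $\phi$ is a bounded isomorphism, so both $\|\phi\|$ and $\|\phi^{-1}\|$ are finite. For any two characters $\rho, \sigma$ on $\M_V$, the dual map satisfies
\[
 \|\phi^*(\rho) - \phi^*(\sigma)\| = \sup_{\|h\|_{\M_V} \le 1} |(\rho-\sigma)(\phi(h))| \le \|\phi\|\, \|\rho - \sigma\|,
\]
and symmetrically $\|(\phi^{-1})^*(\tau) - (\phi^{-1})^*(\omega)\| \le \|\phi^{-1}\|\,\|\tau - \omega\|$. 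Applying the first inequality with $\rho = \rho_\lambda$, $\sigma = \rho_\mu$ gives $\|\rho_{F(\lambda)} - \rho_{F(\mu)}\| \le \|\phi^{-1}\|\,\|\rho_\lambda - \rho_\mu\|$ after being careful about which algebra carries which map; concretely, since $\phi : \M_V \to \M_W$, its adjoint runs $M(\M_W) \to M(\M_V)$, and $F = (\phi)^*|_W$ maps $W$ into $V$, so I would apply the displayed norm bound to $\phi$ to control the $V$-side distance by the $W$-side distance, and to $\phi^{-1}$ for the reverse.

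Finally I would assemble the inequalities. Using the lemma on $V$ and on $W$ respectively,
\[
 d(F(\lambda), F(\mu)) \le \|\rho_{F(\lambda)} - \rho_{F(\mu)}\| \le \|\phi^{-1}\|\,\|\rho_\lambda - \rho_\mu\| \le 2\|\phi^{-1}\|\, d(\lambda,\mu),
\]
which gives the upper bound with $C = 2\|\phi^{-1}\|$, and the analogous chain through $\phi$ gives the lower bound with $c = (2\|\phi\|)^{-1}$. I do not expect a serious obstacle here; the only point requiring genuine care is bookkeeping the direction of the dual maps and confirming that $\phi^*$ really sends the point evaluation $\rho_\lambda$ to $\rho_{F(\lambda)}$ rather than to some non-point-evaluation character. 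That identification is guaranteed by Theorem~\ref{T:iso_discs}, which establishes that $\phi$ is implemented by composition with the biholomorphism $F$ and that $F$ maps $W$ into $V$, so every point evaluation pulls back to a point evaluation. Once that is in hand, the estimate is a routine two-sided application of operator-norm bounds combined with the comparison lemma.
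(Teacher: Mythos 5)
Your proof is correct and follows essentially the same route as the paper: both reduce the claim to the two-sided comparison $d(\lambda,\mu)\le\|\rho_\lambda-\rho_\mu\|\le 2d(\lambda,\mu)$ from the preceding lemma and then transfer the character-norm distance through the bounded maps $\phi$ and $\phi^{-1}$ (you via the operator norm of the adjoint $\phi^*$, the paper via the inclusion $t\,(\M_W)_1\subseteq\phi((\M_V)_1)$ with $t=\|\phi^{-1}\|^{-1}$, which is the same estimate). The only slip is bookkeeping: your displayed inequality correctly carries $\|\phi\|$, so the upper constant should be $C=2\|\phi\|$ and the lower one $c=(2\|\phi^{-1}\|)^{-1}$, the reverse of what you wrote, though this does not affect the conclusion.
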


\begin{proof}
Put $t = \|\phi^{-1}\|^{-1}$, and denote by $(\M_V)_1$ and $(\M_W)_1$ 
the unit balls of $\M_V$ and $\M_W$. 
Then $t \cdot (\M_W)_1 \subseteq \phi((\M_V)_1)$, so  
\begin{align*}
\|\rho_{F(\lambda)} - \rho_{F(\mu)}\| &= \sup_{f \in (\M_V)_1} |\phi(f)(\lambda) - \phi(f)(\mu)| \\
& = \sup_{g \in \phi((\M_V)_1)}|g(\lambda) - g(\mu)| \\
& \geq \sup_{g \in (\M_W)_1}|tg(\lambda) - tg(\mu)| \\
& =t \|\rho_\lambda - \rho_\mu\|.
\end{align*}
Applying the lemma gives 
\[
 t\cdot d(\lambda,\mu) \leq t\cdot \|\rho_\lambda - \rho_\mu\| 
 \leq  \|\rho_{F(\lambda)} - \rho_{F(\mu)}\| \leq 2  d(F(\lambda), F(\mu)).
\]
This gives one inequality with $c = t/2$. The other inequality follows by symmetry. 
\end{proof}

\begin{rem}
The proof of Theorem~\ref{T:notHinf} shows that
\[
 d(f(1-x), f(-1+sx))^2 = 1 - \frac{(1-\|f(1-x)\|^2)(1-\|f(-1+sx)\|^2)}{|1-\ip{f(1-x), f(-1+sx)}|} = o(1) .
\]
That is, we have
\[
 \lim_{x\to 0^+} d(f(1-x), f(-1+sx)) = 0.
\]
It follows that $f$ does not induce an isomorphism between $\M_V$ and $\Hinf$. 

Moreover in the example in Theorem~\ref{T:embed}, an easy estimate shows that
$\|f'(z)\| \ge \sqrt2$ on $\partial \bD$. Since $f'$ never vanishes, we have that
$\inf_{z\in\bD} \|f'(z)\| > 0$. Nevertheless, because of the crossing on the boundary,
the previous paragraph shows that the pseudohyperbolic distance is not preserved
up to a constant. Thus this property is not just a local condition.
\end{rem} 

\begin{eg}
Consider the sequences 
\[ v_n = 1-1/n^2 \qand  w_n = 1-e^{-n^2} \qfor n \ge 1 ,\]
and set $V = \{v_n\}_{n=1}^\infty$ and $W = \{w_n\}_{n=1}^\infty$. 
In \cite[Example 6.2]{DRS2} these two varieties were examined, 
and it was shown that there exist $g,h \in \Hinf$ such that 
\[ h \circ g|_V = \id_V \qand g \circ h |_W = \id_W ,\]
while at the same time, since $W$ is interpolating and $V$ is not, $\M_V$ and $\M_W$ are not isomorphic. 
Theorem \ref{T:pseudo_gen} sheds new light on this example. Indeed, we can check that 
\[ d(v_n, v_{n+1}) = \frac{2n + 1}{2n^2 + 2n} \rightarrow 0, \]
while
\[ d(w_n, w_{n+1}) = \frac{1-e^{-2n-1}}{1+e^{-2n-1}-e^{-n^2-2n-1}} \rightarrow 1 .\]
Thus the biholomorphisms $g$ and $h$ are not bi-Lipschitz on the varieties, hence they cannot induce an isomorphism. 
\end{eg}

The following result generalizes this example significantly.

\begin{prop} \label{P:Blaschke}
Let $V = \{v_n\}$ be a Blaschke sequence in $\bD$.
Then there is an interpolating sequence $W = \{w_n\}$ and functions $g,h \in \Hinf$
such that
\[ g(v_n) = w_n  \qand h(w_n) = v_n \qforal n\ge 1 .\]
\end{prop}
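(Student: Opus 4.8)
The plan is to construct the interpolating sequence $W = \{w_n\}$ by taking a fixed interpolating sequence in $\bD$ and then pulling it back and pushing it forward through suitable bounded analytic functions. The key structural fact I would exploit is that any Blaschke sequence $\{v_n\}$ decomposes into finitely or countably many interpolating subsequences (this is the classical result that a sequence is Blaschke if and only if it is a finite union of interpolating sequences, due to work in the spirit of Vinogradov-Hru\v{s}\v{c}ev; alternatively, one may split by Carleson's theorem). However, the cleaner route is probably to separate $\{v_n\}$ by how much consecutive points cluster.

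First I would recall Carleson's interpolation theorem: a sequence $\{w_n\}$ is interpolating for $\Hinf$ if and only if it satisfies the uniform separation (Carleson) condition $\inf_n \prod_{m \neq n} d(w_n, w_m) > 0$. So the goal reduces to producing $\{w_n\}$ satisfying this condition together with two bounded analytic functions $g, h$ effecting the bijections $g(v_n) = w_n$ and $h(w_n) = v_n$. The natural choice is to spread the $v_n$ apart in the pseudohyperbolic metric. Concretely, I would take a standard interpolating sequence, say $w_n = 1 - 2^{-n}$ (a classical interpolating sequence in $(0,1)$, with $d(w_n, w_{n+1})$ bounded below), and then try to build $g$ and $h$ matching these to the given $v_n$. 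The function $h$ must carry the well-separated $w_n$ back onto the (possibly clustered) Blaschke sequence $v_n$; since $W$ is interpolating, such an $h \in \Hinf$ exists immediately by definition of interpolation, because the target values $v_n \in \bD$ are bounded. That direction is free.

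The real work is constructing $g \in \Hinf$ with $g(v_n) = w_n$, because $\{v_n\}$ need not be interpolating, so one cannot simply invoke Carleson's theorem to prescribe arbitrary bounded values on $\{v_n\}$. Here I would use that $\{v_n\}$ is Blaschke: the Blaschke product $B$ with zeros $\{v_n\}$ is a well-defined element of $\Hinf$, and more importantly one has good control on the derivatives $B'(v_n)$, with $\prod_{m\neq n} d(v_n, v_m) = |B'(v_n)|(1 - |v_n|^2)$ governing the separation. The strategy is to choose the target interpolating sequence $W$ adapted to $V$ rather than fixed in advance: set $w_n$ so that the required interpolation data for $g$ lies in the range where a bounded solution exists. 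A robust way to guarantee this is to make $g$ a bounded function whose values on $\{v_n\}$ we design to be an interpolating sequence automatically — for instance by first mapping through $B$ itself or a related function that uniformizes the separation. I expect the main obstacle to be precisely this: arranging $g$ to be bounded while forcing the images $g(v_n)$ to be uniformly separated, since clustering points $v_n$ with $d(v_n, v_{n+1}) \to 0$ must be sent to points with separation bounded below, which requires $g$ to have large derivative near the clusters without violating the $\Hinf$ bound.

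The way I would overcome this obstacle is to decompose $\{v_n\}$ into a bounded number (or a summably controlled family) of interpolating subsequences, handle each subsequence separately where Carleson's theorem applies directly, and then amalgamate. On each interpolating piece one can freely prescribe $g$ to hit a chosen well-separated target, and the union of the targets can be arranged to remain interpolating by placing the pieces in pseudohyperbolically distant regions of $\bD$. Once both $g$ and $h$ are built piecewise and shown to be bounded, verifying $g(v_n) = w_n$, $h(w_n) = v_n$, and that $W$ is interpolating is routine. The delicate bookkeeping is ensuring the assembled $g$ remains bounded globally and that the assembled $W$ still satisfies the uniform Carleson condition rather than just separation on each block; this is where I would spend the most care, likely using that finitely many interpolating sequences whose mutual separation is bounded below union to an interpolating sequence.
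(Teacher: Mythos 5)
Your reduction is set up correctly---the direction $h(w_n)=v_n$ is free once $W$ is interpolating, and all the work is in producing a single $g\in\Hinf$ with $g(v_n)=w_n$---but the mechanism you propose for $g$ does not work. The structural claim it rests on, that every Blaschke sequence is a finite union of interpolating sequences, is false: by Carleson's theorem any interpolating sequence generates a Carleson measure $\sum_n(1-|z_n|)\delta_{z_n}$, hence so does a finite union of them, whereas the Blaschke sequence $v_n=1-1/n^2$ does not (the mass over a Carleson box of side $\ell$ at the point $1$ is $\approx \ell^{1/2}$, not $O(\ell)$); Example~\ref{E:Blaschke} in the paper is another counterexample. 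Falling back on a countable decomposition does not help, because $g$ must be one globally bounded analytic function: having a Carleson interpolant on each piece separately gives no way to amalgamate, and the natural gluing $\sum_k B_kg_k$ (with $B_k$ Blaschke products vanishing on the other pieces) requires exactly the quantitative control of the separation constants that you are trying to avoid, and with infinitely many pieces it need not converge to a bounded function.

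The missing ingredient is a quantitative interpolation theorem valid on an arbitrary Blaschke sequence. The paper uses a result of Garnett: with $\delta_n=\prod_{i\ne n}|b_{v_i}(v_n)|>0$ (positive precisely because $V$ is Blaschke), any target values satisfying $|a_n|\le\delta_n(1+\log\delta_n^{-1})^{-2}$ are interpolated by some $f\in\Hinf$. One then \emph{chooses} $W$ adapted to $V$, as you anticipated, but concretely: pick $a_n>0$ obeying Garnett's bound and tending to $0$ fast enough that $w_n=1-a_n$ is interpolating, and set $g=1-f$. This realizes your intuition that the clustering of the $v_n$ constrains the admissible targets---the constraint is that $w_n$ must approach the boundary at a rate dictated by $\delta_n$---without any decomposition or gluing.
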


\begin{proof}
Let $b_a(z) = \frac{\bar a}{|a|} \frac{a-z}{1-\bar a z}$ for $a \in \bD$.
Define
\[ \delta_n := \prod_{i \ne n} |b_{v_i}(v_n)| .\]
These values are positive because $V$ is a Blaschke sequence.
(Carleson's interpolation theorem shows that $V$ is an interpolating sequence
if and only if it is strongly separated, i.e.\ $\inf_{n\ge1} \delta_n > 0$.) 
A result of Garnett \cite[Theorem 4]{Gar77} (see \cite[ch.VII, Exercise 9]{Garnett})
shows that if 
\[ |a_n| \le \delta_n (1 + \log \delta_n^{-1})^{-2} , \]
there is an $f\in \Hinf$ such that $f(v_n) = a_n$ for $n \ge 1$.
Choose $(a_n)$ with $a_n>0$ satisfying these inequalities, and tending to $0$ sufficiently fast that
$w_n = 1- a_n$ is an interpolating sequence.
Then $g = 1-f$ is the desired map.
Since $W$ is an interpolating sequence, there is is an $h \in \Hinf$ such that $h(w_n) = v_n$ for all $n\ge1$.  
\end{proof}

It is tempting to conjecture that a biholomorphism with multiplier coordinates between two varieties, 
which is also bi-Lipschitz with respect to the pseudohyperbolic distance $d$, induces an isomorphism. 
The following example shows that this fails.  

\begin{eg} \label{E:Blaschke}
A Blaschke sequence $V = \{v_n\}$ is \textit{separated} if 
\[\inf_{m\neq n} d(v_m,v_n) > 0 .\]
Interpolating sequences are separated, and are characterized by being strongly separated.
However there are Blaschke sequences which are separated but not strongly separated,
and thus are not interpolating.
For such a sequence $V$, the maps constructed in Proposition~\ref{P:Blaschke}
will be bi-Lipschitz in the pseudohyperbolic metric but the multiplier algebras
are not isomorphic.

An explicit example of a separated but not interpolating sequence is given in \cite{DSV}.
Here is a related example which has the additional virtue of having $1$ as the only limit point
of the sequence. Let
\[ v_{n,k} = (1-2^{-n}) e^{ik2^{-n}} \qfor n \ge 1 \AND 0 \le k < 2^{n/2} .\]
Then set $V = \{v_{n,k}: n \ge 1,\ 0 \le k < 2^{n/2} \}$.
It is routine to verify that this satisfies the Blaschke condition and is separated.
In order for the sequence to be interpolating, it is necessary that the measure
$\mu = \sum_{n,k} (1-|v_{n,k}|) \delta_{v_{n,k}}$ be a Carleson measure \cite[Theorem VII.1.1]{Garnett}.   
This means that there is a constant $C$ so that $\mu(S(I)) \le C |I|$ for every arc $I \subset \bT$,
where
\[ S(I) = \{ re^{i\theta}: 1-|I| \le r < 1,\ e^{i\theta} \in I \} .\]
But $\mu$ is not a Carleson measure: for $p\ge1$, let 
\[ S_p = S([0,2^{-p})) = \{ re^{i\theta}: 1-2^{-p} \le r < 1,\ 0 \le \theta < 2^{-p} \} .\]
Then
\begin{align*}
 \frac1{2^{-p}} \sum_{v_{n,k}\in S_p} 1-|v_{n,k}| &= 2^p \sum_{n\ge p} 2^{-n} \min\{2^{n-p}, 2^{n/2} \} \\
 &\ge 2^p \sum_{n=p}^{2p} 2^{-n} 2^{n-p} = p+1 .
\end{align*}
This is not bounded.
\end{eg}

\begin{rem} \label{R:not equivalence}
Proposition~\ref{P:Blaschke} raises a fundamental issue in finding a converse to Theorem~\ref{T:algiso}.
The property of having a multiplier biholomorphism between two varieties $V$ and $W$ 
\textit{is not an equivalence relation}!
The proposition shows that every Blaschke is equivalent to some interpolating sequence.
Moreover, examination of the proof shows that if $V = \{ v_n\}$ and $X = \{ x_n\}$ are Blaschke sequences,
there is a common interpolating sequence $W$ that works for both.

However in general, there is no $h\in \Hinf$ such that $h(V) = X$.
To see this, let 
\[ v_n = 1 - n^{-2} \qand  x_n = (-1)^n v_n \qfor n \ge 2 .\]
Suppose that $h$ exists. Let $C=\|h\|_\infty$ and $g = C^{-1}h$.
Then there is an increasing sequence $n_i$ so that
$h(v_{n_i}) > 1/2$ and $h(v_{n_i + 1}) < -1/2$. 
Then $d(v_{n_i} ,v_{n_i + 1}) \approx 1/n_i$ but 
\[ d(g(v_{n_i}), g(v_{n_i + 1}) ) > d(\tfrac1{2C}, \tfrac{-1}{2C} ) =: \delta > 0 .\]
This contradicts the Schwarz inequality.

The problem is that we cannot compose these maps \textit{on the whole ball}.
The extensions of $g$ and $h$ to the ball do not have norm 1, and thus do not map the ball into
the ball. So composition is not possible on some points off of the variety.
Again the issue with this example is that the varieties have infinitely many irreducible components.
We do not know of any examples with finitely many irreducible components in a finite dimensional ball
where multiplier biholomorphism does not imply isomorphism of the multiplier algebras.
Of course, isomorphism is an equivalence relation.
To show that multiplier biholomorphism is not an equivalence relation in this setting requires 
a counterexample to the hoped-for converse of Theorem~\ref{T:algiso}.
\end{rem}

\section{A class of discs in $\bB_\infty$}  \label{S:Binfty}

We consider a class of embeddings of $\bD$ into $\bB_\infty$, which
were studied in \cite{DRS2}.
Let $(b_n)_{n=1}^\infty \in \ltwo$ with $||(b_n)||_2=1$ and $b_1 \neq 0$. 
Define $f:\bD \to \bB_\infty$ by
\[
 f(z) = (b_1 z, b_2 z^2, b_3 z^3, \ldots) \qfor z \in \bD.
\]
Then $f$ is a biholomorphism with inverse $g = b_1^{-1}Z_1$, and these maps are multipliers.
Moreover the range $V=f(\bD)$ is a variety in the sense of \cite{DRS2} because 
\[
 V = V(\{ b_n z_1^n - b_1^n z_n : n \ge 2\}) = \bigcap_{n\ge2} Z(b_n z_1^n - b_1^n z_n) \cap \bB_\infty .
\]
It is easy to see that any two varieties of this type are multiplier biholomorphic.

Define a kernel on $\bD$ by
\[
  K(z,w) = \frac{1}{1- \langle f(z), f(w) \rangle} \qfor z,w \in \bD,
\]
and let $\H_f$ be the Hilbert function space on $\bD$ with kernel $K$.
Then we can define a linear map 
$U:\H_V \to \H_f$ by $Uh = h\circ f$.
Since
\[
 \ip{k_{f(x)}, k_{f(y)}}_{\H_V} = \frac1{1-\ip{f(x),f(y)}} = \ip{k_x,k_y}_{\H_f} \qforal x,y \in \bD ,
\]
it follows that $U k_{f(x)} = k_x$ extends to a unitary map of $\H_V$ onto $\H_f$.
Hence composition with $f$ determines a unitarily implemented 
completely isometric isomorphism $C_f: \M_V \to \Mult(\H_f)$.
This observation allows us to work with multiplier algebras of Hilbert function spaces
on the disc instead of the algebras $\M_V$. 

Thanks to the special form of $f$, we can write
\[
  K(z,w) = \frac1{1- \sum_{n=1}^\infty |b_n|^2 (z\ol w)^n} =:  \sum_{n=0}^\infty a_n (z \ol w)^n
\]
for a suitable sequence $(a_n)$.
Hence $\H_f$ is a weighted Hardy space.
Set $c_n = |b_n|^2$. 
It was established in \cite{DRS2} that the sequence $(a_n)$ satisfies the recursion
\begin{equation} \label{eqn:a_n_recursion} \tag{3}
  a_0 = 1 \quad \text{ and } \quad
  a_n = \sum_{k=1}^n c_k a_{n-k} \quad \text{ for } n \ge 1.
\end{equation}
Moreover, $a_n \in (0,1]$ for all $n \in \bN$.

\begin{rem}  \label{R:a_n_gen_function}
The coefficients $(a_n)$ can also be determined in the following way. First, note that
as $||(b_n)||_2 = 1$, the function $g$ defined by
\[
  g(z) = \sum_{n=1}^\infty c_n z^n
\]
is holomorphic on $\bD$ and does not take the value $1$ there. Evidently,
\begin{equation} \label{eqn:a_n_gen_function} \tag{4}
  \frac{1}{1 - g(z)} = \sum_{n=0}^\infty a_n z^n \qforal z \in \bD.
\end{equation}
That is, $(a_n)$ is the sequence of Taylor coefficients of $(1-g)^{-1}$ at the origin. 
\end{rem}

The special form of the kernel $K$ allows us to explicitly compute the multiplier norm
of monomials in $\H_f$.

\begin{lem}  \label{L:mult_norm}
Suppose that $\H_f$ is a reproducing kernel Hilbert space on $\bD$ with kernel
\[
  K(z,w) = \sum_{n=0}^\infty a_n (z \ol{w})^n,
\]
where the sequence $(a_n)$ satisfies a recursion as in \eqref{eqn:a_n_recursion} for some sequence
of nonnegative numbers $(c_n)$ with $c_1 \neq 0$. Then
\[
  ||z^n||_{\Mult(\H_f)}^2  = ||z^n||^2_{\H_f}   = \frac{1}{a_n} \qforal n \in \bN.
\]
\end{lem}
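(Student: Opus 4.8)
The plan is to exploit the diagonal form of the kernel, which makes the monomials an orthogonal basis, and then to recognize $M_{z^n}$ as a weighted shift whose norm can be read off directly. First I would observe that the expansion $K(z,w) = \sum_{k} a_k (z\ol w)^k$ says precisely that the vectors $e_k := \sqrt{a_k}\, z^k$ form an orthonormal basis of $\H_f$, since $K(z,w) = \sum_k e_k(z)\ol{e_k(w)}$. Hence $\ip{z^m, z^n}_{\H_f} = \delta_{mn}/a_n$, and in particular $\|z^n\|_{\H_f}^2 = 1/a_n$, which disposes of the Hilbert-space norm. It also yields one half of the multiplier statement: because $a_0 = 1$ forces $\|1\|_{\H_f} = 1$, we get the lower bound
\[
 \|z^n\|_{\Mult(\H_f)} \ge \frac{\|M_{z^n} 1\|_{\H_f}}{\|1\|_{\H_f}} = \|z^n\|_{\H_f} = a_n^{-1/2}.
\]

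For the matching upper bound I would compute $M_{z^n}$ in the basis $\{e_k\}$. From $M_{z^n} e_k = \sqrt{a_k}\, z^{k+n} = \sqrt{a_k/a_{k+n}}\, e_{k+n}$ one sees that $M_{z^n}$ is a weighted shift, so that $M_{z^n}^* M_{z^n}$ is diagonal with entries $a_k/a_{k+n}$, and therefore
\[
 \|z^n\|_{\Mult(\H_f)}^2 = \|M_{z^n}^* M_{z^n}\| = \sup_{k \in \bN} \frac{a_k}{a_{k+n}}.
\]
The $k=0$ term equals $a_0/a_n = 1/a_n$, so the whole computation reduces to showing that this supremum is attained at $k=0$; that is, to the supermultiplicativity estimate $a_{k+n} \ge a_k a_n$ for all $k,n$.

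Supermultiplicativity is the crux, and I would prove $a_{m+n} \ge a_m a_n$ by induction on $n$, uniformly in $m$, using only the recursion \eqref{eqn:a_n_recursion} and the nonnegativity of the $c_k$. Granting $a_{m+j} \ge a_m a_j$ for all $j < n$ and all $m$, the recursion gives
\[
 a_m a_n = \sum_{k=1}^n c_k\, a_m a_{n-k} \le \sum_{k=1}^n c_k\, a_{m+n-k} \le \sum_{k=1}^{m+n} c_k\, a_{m+n-k} = a_{m+n},
\]
where the first inequality is the inductive hypothesis applied with index $n-k < n$, and the second merely adds nonnegative terms. It follows that $a_k/a_{k+n} \le 1/a_n$ for every $k$, so the supremum above equals $1/a_n$ and the two norms agree. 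I would also note the transparent combinatorial reading behind this: expanding $(1-g)^{-1} = \sum_j g^j$ as in Remark~\ref{R:a_n_gen_function} exhibits $a_n$ as the total $(c_k)$-weight of all compositions of $n$ into positive parts, and concatenation embeds the compositions of $m$ paired with those of $n$ injectively into the compositions of $m+n$ (the parts being positive, the partial sums strictly increase, so the cut point is unique), giving $a_m a_n \le a_{m+n}$ at once. The only genuine obstacle is this supermultiplicativity; once it is in hand, the weighted-shift computation makes the equality of the two norms automatic.
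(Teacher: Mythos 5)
Your proposal is correct and follows essentially the same route as the paper: compute $\|z^n\|_{\H_f}^2 = 1/a_n$ from the diagonal kernel, reduce the multiplier norm to $\sup_k a_k/a_{n+k}$ via the weighted-shift structure, and settle it by proving the supermultiplicativity $a_k a_n \le a_{n+k}$ by induction using the recursion \eqref{eqn:a_n_recursion} and nonnegativity of the $c_k$ (the paper inducts on the first index rather than the second, but the computation is identical). The combinatorial reading via compositions is a nice extra but not needed.
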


\begin{proof}
The assumptions imply that $a_n \neq 0$ for all $n \in \bN$, so by a general result about weighted
Hardy spaces, we have
\[
  ||z^n||^2_{\H_f} = \frac{1}{a_n} .
\]
Therefore for $n \in \bN$,
\[
  ||z^n||^2_{\Mult(\H_h)} = \sup_{k\ge0} \frac{\|z^{n+k}\|_{\H_f}}{\|z^{k}\|_{\H_f}} = \sup_{k \ge0} \frac{a_k}{a_{n+k}} .
\]
Since $a_0 = 1$, it  suffices to show that
\[
  a_k a_n \le a_{n + k} \qforal k,n \in \bN .
\]

The proof of this claim proceeds by induction on $k$. 
The base case holds since $a_0 = 1$.
Assume that $k \ge 1$, and that the assertion has been established for
natural numbers smaller than $k$. Then
\[
  a_k a_n = \sum_{i=1}^k a_{k-i} a_n c_i \le \sum_{i=1}^{n+k} a_{n+k-i} c_i = a_{n+k}
\]
as asserted.
\end{proof}

The results of Section~\ref{S:auto_inv} suggest that we should attempt to verify the properties
\begin{enumerate}
\item for every $\lambda \in V$, the fiber $\pi^{-1}(\lambda) = \{\rho_\lambda\}$, and
\item $\pi (M(\M_V)) \cap \bB_d = V$. 
\end{enumerate} 
We first observe that Proposition~\ref{P:condition(2)} shows that (2) always holds because
the functions $\{b_n z_1^n - b_1^n z_n : n \ge 2 \}$ are polynomials.
In fact, Remark~\ref{R:condition(2)} shows that  $\pi (M(\M_V)) = \ol{V}$. 

We do not know if (1) holds in general.
It does hold for a large class of examples.
In particular, if the ideal of multipliers which vanish at $\lambda$ coincides with $\ol{(z-\lambda) \Mult(\H_f)}$,
then it is clear that any character $\phi\in\pi^{-1}(\lambda)$ must be point evaluation.
We do not have a characterization of when this occurs.
The following result, without the norm closure, will suffice for our current needs.

\begin{lem} \label{L:condition(1)}
Let $ f(z) = (b_1 z, b_2 z^2, b_3 z^3, \ldots)$ for $z \in \bD$, where $\|(b_i)\|_2 \le 1$.
The following assertions are equivalent:
  \begin{enumerate}[label=\normalfont{(\roman*)}]
    \item For every $g \in \M_V$ with $g(0) = 0$, there is $\widetilde g \in \M_V$ such that $g = z_1 \widetilde g$.
    \item For every $g \in \Mult(\H_f)$ with $g(0) = 0$, we have $g /z \in \Mult(\H_f)$.
    \item The sequence $\big(\frac{a_n}{a_{n-1}} \big)_{n \ge 1}$ is bounded.
  \end{enumerate}
\end{lem}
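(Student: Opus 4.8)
The plan is to prove the cycle by first reducing (i) to (ii) through the completely isometric algebra isomorphism $C_f : \M_V \to \Mult(\H_f)$, $h \mapsto h \circ f$, and then to establish (ii) $\Leftrightarrow$ (iii) by viewing both as statements about dividing a function by $z$ in the weighted Hardy space $\H_f$.

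\emph{Reduction (i) $\Leftrightarrow$ (ii).} Under $C_f$ the coordinate multiplier $z_1 \in \M_V$ is carried to $b_1 z$, since the first component of $f$ is $b_1 z$; as $b_1 \neq 0$, multiplication by $z_1$ on $\M_V$ corresponds to multiplication by $z$ on $\Mult(\H_f)$ up to the nonzero scalar $b_1$. Since $f(0)=0$, we have $g(0)=0$ for $g \in \M_V$ if and only if $(C_f g)(0)=0$, and $C_f$ maps $\{g \in \M_V : g(0)=0\}$ bijectively onto $\{G \in \Mult(\H_f) : G(0)=0\}$. The factorization $g = z_1 \widetilde g$ in $\M_V$ then corresponds exactly to $C_f g = z\,(b_1 C_f \widetilde g)$, i.e.\ to $C_f g / z \in \Mult(\H_f)$, so (i) and (ii) are equivalent statements.

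\emph{Direction (ii) $\Rightarrow$ (iii).} I would consider the division map $T : \{G \in \Mult(\H_f) : G(0)=0\} \to \Mult(\H_f)$, $TG = G/z$, which (ii) asserts to be everywhere defined on the closed subspace $\{G(0)=0\}$ of $\Mult(\H_f)$. Using that the multiplier norm dominates the supremum norm on $\bD$ (so that norm convergence forces pointwise convergence, whence any graph limit must coincide with $G/z$), the map $T$ is closed, and the closed graph theorem gives that $T$ is bounded. Applying the bound $\|TG\|_{\Mult} \le \|T\|\,\|G\|_{\Mult}$ to the monomials $G = z^n$ and invoking Lemma~\ref{L:mult_norm}, namely $\|z^n\|^2_{\Mult(\H_f)} = 1/a_n$, yields $1/a_{n-1} \le \|T\|^2 / a_n$, that is $a_n/a_{n-1} \le \|T\|^2$ for all $n$, which is (iii).

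\emph{Direction (iii) $\Rightarrow$ (ii).} Set $C^2 = \sup_n a_n/a_{n-1} < \infty$. Given $g \in \Mult(\H_f)$ with $g(0)=0$ and any $h \in \H_f$, the product $gh$ lies in $\H_f$ and vanishes at $0$, so writing $gh = \sum_{n\ge1} w_n z^n$ I can estimate $\widetilde g h = gh/z = \sum_{m\ge0} w_{m+1} z^m$ termwise using $\|z^m\|^2_{\H_f} = 1/a_m$: one finds $\|\widetilde g h\|^2_{\H_f} = \sum_m |w_{m+1}|^2/a_m = \sum_m (a_{m+1}/a_m)\,|w_{m+1}|^2/a_{m+1} \le C^2 \|gh\|^2_{\H_f} \le C^2 \|g\|_{\Mult}^2 \|h\|^2_{\H_f}$. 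Hence $g/z \in \Mult(\H_f)$, which is (ii). Equivalently, condition (iii) is exactly the statement that the weighted shift $M_z$ on $\H_f$ is bounded below, so that $M_z^{-1}$ is bounded on the range $\{G(0)=0\}$.

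The only genuinely non-computational point — and the step I expect to be the main obstacle — is the passage in (ii) $\Rightarrow$ (iii) from the bare well-definedness asserted in (ii) to the \emph{uniform} norm bound needed to extract (iii). This is precisely where the closed graph theorem is essential, together with the domination $\|\cdot\|_\infty \le \|\cdot\|_{\Mult}$ that makes the division map closed; all remaining steps are routine bookkeeping with the weights $(a_n)$.
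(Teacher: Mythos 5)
Your proposal is correct and follows essentially the same route as the paper: (i)$\Leftrightarrow$(ii) via the composition isomorphism $C_f$, (ii)$\Rightarrow$(iii) by applying the closed graph theorem to the division-by-$z$ map on $\Mult(\H_f)$ (closedness coming from the domination of the sup norm by the multiplier norm) and then testing on the monomials $z^n$ via Lemma~\ref{L:mult_norm}, and (iii)$\Rightarrow$(ii) by showing that division by $z$ is bounded on the subspace of $\H_f$ of functions vanishing at the origin. Your explicit coefficient estimate in the last step is just an unwound version of the paper's observation that the backward-shift operator $D$ sends $z^n$ to $z^{n-1}$ with norm ratio controlled by $\sup_n a_n/a_{n-1}$, and that $DM_g = M_{g/z}$.
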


\begin{proof}
  The equivalence of (i) and (ii) follows by an application of the isomorphism
  \begin{equation*}
    \M_V \to \Mult(\H_f), \quad g \mapsto g \circ f.
  \end{equation*}
  Suppose  that (iii) holds. Then
  \begin{equation*}
    D: \H_f \to \H_f, \quad h \mapsto \frac{h - h(0)}{z},
  \end{equation*}
  is a bounded linear map. Indeed $D$ maps $z^n$ to $z^{n-1}$, and $||z^n||^2 = \frac{1}{a_n}$.
  Let $ g \in \Mult(\H_f)$ with $g(0) = 0$. Then for every $h \in \H_f$, we have
  \begin{equation*}
    D M_g h = D (g h) = \frac{g}{z} h.
  \end{equation*}
  This shows that $g/z \in \Mult(\H_f)$ and that $D M_g = M_{g /z}$. Hence, (ii) holds.

  Conversely, suppose that (ii) is satisfied. Then
  \begin{equation*}
    \widetilde D: \Mult(\H_f) \to \Mult(\H_f), \quad g \mapsto \frac{g - g(0)}{z},
  \end{equation*}
  is defined and clearly linear. Since convergence in $\Mult(\H_f)$ implies pointwise
  convergence on $\bD$, we conclude with the help of the closed graph theorem that
  $\widetilde D$ is bounded. In particular,
  \begin{equation*}
    \frac{1}{a_{n-1}} = ||z^{n-1}||^2_{\Mult(\H_f)} = 
    ||\widetilde D z^{n}||^2_{\Mult(\H_f)} \le
    ||\widetilde D||^2 \, ||z^n||^2_{\Mult(\H_f)}
    = ||\widetilde D||^2 \, \frac{1}{a_n}.
  \end{equation*}
  Thus, (iii) holds.
\end{proof}

\begin{cor} \label{C:condition(1)}
Let $ f(z) = (b_1 z, b_2 z^2, b_3 z^3, \ldots)$ for $z \in \bD$, where $\|(b_i)\|_2 \le 1$.
If $\M_V$ is automorphism invariant and $\sup_{n\ge1} \frac{a_n}{a_{n-1}}  < \infty$,
then $\pi^{-1}(\lambda) = \{\rho_\lambda\}$ for every $\lambda\in V$.
\end{cor}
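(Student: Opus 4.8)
The plan is to transport the entire question to the multiplier algebra $\Mult(\H_f)$ on the disc via the completely isometric isomorphism $C_f$ of Section~\ref{S:Binfty}, and thereby reduce the identification of the fiber to a statement about point evaluations on a weighted Hardy space. First I would observe that a character $\phi \in \pi^{-1}(\lambda)$ with $\lambda = f(w_0) \in V$ corresponds under $C_f$ to a character $\psi = \phi \circ C_f^{-1}$ of $\Mult(\H_f)$. Since $C_f(Z_i) = b_i z^i$, the condition $\phi(Z_1) = \lambda_1 = b_1 w_0$ forces $\psi(z) = \lambda_1/b_1 = w_0$, using $b_1 \neq 0$; because the first coordinate of $f$ is injective, $w_0$ is the unique preimage and lies in $\bD$. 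Thus it suffices to prove that every character $\psi$ of $\Mult(\H_f)$ with $\psi(z) = w_0 \in \bD$ is the evaluation $\rho_{w_0}$.

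Next I would dispose of the base point $w_0 = 0$, which is where the boundedness hypothesis enters. A character $\psi$ with $\psi(z) = 0$ annihilates $z$, hence annihilates the ideal $z\,\Mult(\H_f)$. By Lemma~\ref{L:condition(1)} (the implication (iii)$\Rightarrow$(ii), available because $\sup_{n} a_n/a_{n-1} < \infty$), every $g \in \Mult(\H_f)$ with $g(0)=0$ satisfies $g/z \in \Mult(\H_f)$, so $g = z\,(g/z) \in z\,\Mult(\H_f)$. Therefore the maximal ideal $\ker \rho_0 = \{g : g(0) = 0\}$ equals $z\,\Mult(\H_f)$ exactly, with no closure required. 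Since $\psi$ is unital and vanishes on $\ker \rho_0$, while $\Mult(\H_f) = \bC\,1 \oplus \ker \rho_0$, we conclude $\psi = \rho_0$.

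To reach a general $w_0 \in \bD$ I would invoke the automorphism invariance hypothesis. Transported through $C_f$ as in Corollary~\ref{C:Mobius} and the discussion of Section~\ref{S:auto_inv}, automorphism invariance of $\M_V$ says precisely that $C_\theta : g \mapsto g \circ \theta$ is an algebra automorphism of $\Mult(\H_f)$ for every M\"obius map $\theta$. Taking $\theta$ with $\theta(w_0) = 0$, so that $\theta = (z - w_0)(1 - \bar w_0 z)^{-1}$ as functions, I would show that $\psi' := \psi \circ C_\theta$ satisfies $\psi'(z) = \psi(\theta) = 0$. Indeed, the factorization $\theta\,(1 - \bar w_0 z) = z - w_0$ holds as a genuine multiplier identity (all three factors being multipliers), and applying $\psi$ gives $\psi(\theta)\,(1 - |w_0|^2) = \psi(z) - w_0 = 0$, whence $\psi(\theta) = 0$ since $w_0 \in \bD$. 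By the base case, $\psi' = \rho_0$, and consequently $\psi = \psi' \circ C_{\theta^{-1}} = \rho_0 \circ C_{\theta^{-1}} = \rho_{\theta^{-1}(0)} = \rho_{w_0}$. Transporting back through $C_f$ yields $\phi = \rho_\lambda$, so $\pi^{-1}(\lambda) = \{\rho_\lambda\}$, as required.

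The main obstacle I anticipate is the bookkeeping in the automorphism reduction: confirming that automorphism invariance of $\M_V$ really delivers every $C_\theta$ as an automorphism of $\Mult(\H_f)$ under the correspondence of Section~\ref{S:auto_inv}, and that the factorization $\theta\,(1 - \bar w_0 z) = z - w_0$ is legitimate at the level of multipliers so that the computation $\psi(\theta) = 0$ is valid. The crucial point making this work is that $\psi(1 - \bar w_0 z) = 1 - |w_0|^2 \neq 0$ because $w_0 \in \bD$. Once the base case is established, the remainder is formal.
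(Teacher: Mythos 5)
Your proposal is correct and follows essentially the same route as the paper's (very terse) proof: handle $\lambda=0$ by using Lemma~\ref{L:condition(1)} to factor every multiplier vanishing at the origin through the coordinate function, so that a character in the fiber over $0$ kills the whole maximal ideal and must be $\rho_0$, and then transfer to general $\lambda$ by automorphism invariance. Your version merely transports everything to $\Mult(\H_f)$ via $C_f$ and spells out the M\"obius-factorization computation $\psi(\theta)(1-|w_0|^2)=\psi(z)-w_0$ that the paper compresses into ``automorphism invariance readily shows.''
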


\begin{proof}
The result is immediate for $\lambda = 0$ since every $g\in\M_V$ such that $g(0)=0$
factors as $g=z_1h$ for some $h\in\M_V$.
Thus if $\phi\in\pi^{-1}(0)$, we have $\phi(g) = \phi(z_1)\phi(h) = 0 = \rho_0(g)$.
Hence $\phi=\rho_0$.

Automorphism invariance readily shows that the same holds for every $\lambda\in V$. 
\end{proof}

Suppose now that
\[
  \tilde{f}(z) = (\tilde{b}_1 z, \tilde{b}_2 z^2, \tilde{b}_3z^3, \ldots) \qfor z \in \bD
\]
is another embedding of the disc into $\bB_\infty$ as above, and set $\td{V} = \tilde{f}(\bD)$.
We may define a sequence $(\tilde{a}_n)$ using \eqref{eqn:a_n_recursion} or Remark \ref{R:a_n_gen_function}.
We ask: when are $\M_V$ and $\M_{\td{V}}$ isomorphic?

\begin{prop} \label{P:M_V_iso}
The algebras $\M_V$ and $\M_{\td{V}}$ are isomorphic via the natural
map of composition with $f \circ {\tilde{f}}^{-1}$ if and only if the sequences $(a_n)$ and $(\tilde{a}_n)$ are comparable. 

Suppose that $\M_{\widetilde V}$ satisfies $(1)$\ $\pi^{-1}(\lambda) = \{\rho_\lambda\}$ for every $\lambda\in \widetilde V$
and is automorphism invariant.
Then $\M_V$ is isomorphic to $\M_{\td{V}}$ if and only if the sequences $(a_n)$ and $(\tilde{a}_n)$ are comparable.
In particular, $\M_V$ is isomorphic to $\Hinf$ if and only if the sequence $(a_n)$ is bounded below.
\end{prop}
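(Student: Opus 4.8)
The plan is to transport the entire question to the weighted Hardy space models $\Mult(\H_f)$ and $\Mult(\H_{\tilde f})$, where comparability of the weights can be read directly off the monomial multiplier norms from Lemma~\ref{L:mult_norm}. First I would use the unitarily implemented completely isometric isomorphisms $C_f : \M_V \to \Mult(\H_f)$ and $C_{\tilde f} : \M_{\td V} \to \Mult(\H_{\tilde f})$ recorded above. The key observation is that, after conjugating by these maps, the natural map $C_{f \circ \tilde f^{-1}} : \M_V \to \M_{\td V}$ becomes the identity inclusion of function spaces: for $p \in \Mult(\H_f)$ one checks directly that $C_{\tilde f}\, C_{f\circ\tilde f^{-1}}\, C_f^{-1}\, p = p$. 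Hence the natural map is a topological algebra isomorphism if and only if the identity map $\Mult(\H_f) \to \Mult(\H_{\tilde f})$ is a bounded bijection with bounded inverse, that is, if and only if $\Mult(\H_f) = \Mult(\H_{\tilde f})$ with equivalent multiplier norms.

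To finish the first assertion I would show this coincidence is equivalent to comparability of $(a_n)$ and $(\tilde a_n)$. If there are constants $c, C > 0$ with $c\,\tilde a_n \le a_n \le C\,\tilde a_n$ for all $n$, then since $\|z^n\|^2_{\H_f} = 1/a_n$ and the monomials are orthogonal, $\H_f$ and $\H_{\tilde f}$ are the same space of functions with equivalent Hilbert space norms; consequently a function multiplies one into itself exactly when it multiplies the other, with comparable multiplier norms. Conversely, if the multiplier norms are equivalent, I evaluate on monomials: by Lemma~\ref{L:mult_norm}, $\|z^n\|^2_{\Mult(\H_f)} = 1/a_n$ and $\|z^n\|^2_{\Mult(\H_{\tilde f})} = 1/\tilde a_n$, so equivalence of these norms forces $(a_n)$ and $(\tilde a_n)$ to be comparable. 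This settles the first statement.

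For the second statement I would invoke Corollary~\ref{C:mob_inv_iso} with $V_1 = \td V$ and $V_2 = V$. Condition (2) of Theorem~\ref{T:iso_discs} holds automatically for $\td V$ by Proposition~\ref{P:condition(2)}, since $\td V$ is the intersection of zero sets of the polynomials $\tilde b_n z_1^n - \tilde b_1^n z_n$; condition (1) and automorphism invariance of $\M_{\td V}$ are hypotheses. The corollary then says $\M_V$ and $\M_{\td V}$ are isomorphic if and only if they are isomorphic via the natural map (the direction appearing in the corollary is the inverse of the natural map of the first statement, but this is immaterial, as a map is a topological isomorphism exactly when its inverse is). Combining with the first statement yields the stated equivalence. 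For the ``in particular'', take $\tilde f(z) = (z,0,0,\dots)$, so that $\tilde a_n = 1$ for all $n$, $\H_{\tilde f} = H^2$ and $\M_{\td V} \cong \Hinf$; here $\M_{\td V}$ is automorphism invariant by Kakutani's theorem, and since $\tilde a_n / \tilde a_{n-1} = 1$ is bounded, Corollary~\ref{C:condition(1)} gives condition (1) for $\td V$. Applying the second statement, $\M_V \cong \Hinf$ if and only if $(a_n)$ is comparable to the constant sequence $1$; since $0 < a_n \le 1$ for all $n$, this reduces to $(a_n)$ being bounded below.

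The main obstacle is conceptual rather than computational: one must correctly identify the natural map with the identity on the weighted Hardy space models and keep track of which variety plays the role of $V_1$ in Corollary~\ref{C:mob_inv_iso}. Once this is in place, Lemma~\ref{L:mult_norm} supplies the heart of the argument, turning an abstract comparison of multiplier algebras into an elementary comparison of the weight sequences, and the remaining steps are routine.
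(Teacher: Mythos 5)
Your proposal is correct and follows essentially the same route as the paper: reduce to the weighted Hardy space models via $C_f$ and $C_{\tilde f}$, observe that the natural map becomes the identity so that isomorphism via the natural map is equivalent to $\Mult(\H_f)=\Mult(\H_{\tilde f})$ with equivalent norms (automatic topological equivalence coming from semisimplicity), read off comparability of $(a_n)$ and $(\tilde a_n)$ from the monomial norms in Lemma~\ref{L:mult_norm}, and then apply Corollary~\ref{C:mob_inv_iso} and the $H^2$ model $\tilde f(z)=(z,0,0,\dots)$ for the remaining claims. Your explicit verification of condition (1) for the $H^2$ variety via Corollary~\ref{C:condition(1)} is slightly more careful than the paper's, but the argument is the same.
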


\begin{proof}
Suppose that $(a_n)$ and $(\tilde{a}_n)$ are comparable. 
The sequence $\{z^n\}$ is a spanning orthogonal sequence,
and Lemma~\ref{L:mult_norm} shows that their norms in $\H_f$ and $\H_{\tilde{f}}$ are comparable.
Thus the identity map is an invertible diagonal operator between $\H_f$ and $\H_{\tilde{f}}$. 
Therefore, $\Mult(\H_f) = \Mult(\H_{\tilde{f}})$, so that
$\M_V$ and $\M_{\td{V}}$ are isomorphic via the natural map.

Conversely, if $\M_{V}$ and $\M_{\td{V}}$ are isomorphic via the natural map, then
$\Mult(\H_f) = \Mult(\H_{\tilde{f}})$.
Therefore the identity map is an isomorphism between these
two semisimple Banach algebras.
Consequently, it is a topological isomorphism. 
So by Lemma \ref{L:mult_norm}, the sequences $(a_n)$ and $(\tilde{a}_n)$ are comparable.

If $\M_V$ is automorphism invariant and satisfies (1), Corollary \ref{C:mob_inv_iso} applies.
So this is equivalent to $\M_V$ being isomorphic to $\M_{\td{V}}$ via any isomorphism.

Note that $H^2$ corresponds to the map $f(z) = (z,0,0,\dots)$; and $a_n = 1$ for all $n\ge1$ because 
\[ \frac1{1-z} = \sum_{n\ge0} z^n .\]
In general, $0<\tilde{a}_n \le 1$, so $(\tilde{a}_n)$ is comparable to $(a_n)$ if and only if it is bounded below.
The last claim now follows from the previous paragraph and the automorphism invariance of $\Hinf = \Mult(H^2)$. 
\end{proof}

In \cite[Example 6.12]{DRS2}, an example was given of a variety $V = f(\bD)$ 
as above such that $\H_f$ is not isomorphic to $H^2$ via the identity map 
(so that $\M_V$ is not similar to $\Hinf$ in the obvious way), 
and the question was raised whether or not $\M_V$ is isomorphic to $\Hinf$. 
The above proposition answers this question, showing that those algebras are not isomorphic. 

The following result gives a criterion for $\M_V$ being isomorphic to $\Hinf$ in terms
of the sequence $(b_n)$ in the definition of the map $f$.

\begin{cor} \label{C:iso_Hinf}
Let $V = f(\bD)$ where $f(z) = (b_1 z, b_2 z^2, b_3 z^3, \ldots)$, $\|(b_n)\|_2 = 1$ and $b_1 \ne 0$.
Then $\M_V$ is isomorphic to $\Hinf$ if and only if
\[
  \sum_{n=1}^\infty n |b_n|^2 < \infty.
\]
\end{cor}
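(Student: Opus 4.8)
The plan is to combine Proposition~\ref{P:M_V_iso} with a careful analysis of the generating function identity \eqref{eqn:a_n_gen_function}. By Proposition~\ref{P:M_V_iso}, $\M_V$ is isomorphic to $\Hinf$ precisely when the sequence $(a_n)$ is bounded below, so the entire task reduces to showing that $(a_n)$ is bounded below if and only if $\sum_{n=1}^\infty n|b_n|^2 < \infty$. (One must first check the hypotheses of the proposition: automorphism invariance and condition (1). Here $\widetilde V$ should be taken to be the model case $\Hinf = \Mult(H^2)$, for which both properties are standard, so the proposition's second clause applies with the roles arranged so that the reference algebra is the one known to be well-behaved.)

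The core analytic statement concerns the Taylor coefficients $(a_n)$ of $(1-g)^{-1}$, where $g(z) = \sum_{n=1}^\infty c_n z^n$ with $c_n = |b_n|^2 \ge 0$ and $\sum c_n = 1$. First I would observe that since $c_n \ge 0$ and $g(1^-) = \sum c_n = 1$, the function $1-g$ has a boundary value $0$ at $z=1$, so $(1-g)^{-1}$ has a singularity approaching $z=1$ from inside the disc; the behaviour of $a_n$ is governed by how fast $g(x) \to 1$ as $x \to 1^-$. The key quantity is $\sum n c_n = \sum n |b_n|^2 = g'(1^-)$ (a limit in $[0,\infty]$ by monotone convergence). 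When this sum is finite, $g'(1) < \infty$, so $1 - g(x) \sim g'(1)(1-x)$ near $x=1$, and I expect this to force $a_n$ to converge to a positive limit $1/g'(1)$; when the sum diverges, $1-g(x) = o(1-x)$, which should drive $a_n \to 0$.

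The cleanest route to the dichotomy is a Tauberian/Abelian argument. Since $(a_n)$ is positive and (by the monotonicity $a_k a_n \le a_{n+k}$ established in Lemma~\ref{L:mult_norm}, applied with one index equal to $1$) the ratios $a_n/a_{n-1}$ are at least $a_1 > 0$, the sequence $(a_n)$ has controlled multiplicative structure. Abel summation gives $\sum a_n x^n = (1-g(x))^{-1}$, so $(1-x)\sum a_n x^n = (1-x)/(1-g(x)) \to 1/g'(1)$ as $x \to 1^-$ whenever $g'(1) \in (0,\infty]$ (interpreting $1/\infty = 0$). By the Hardy–Littlewood Tauberian theorem, if $a_n$ were to converge it would have to converge to $1/g'(1)$; conversely one shows the Cesàro means $\frac1N\sum_{n<N} a_n$ tend to $1/g'(1)$, which is $0$ exactly when $g'(1) = \sum n|b_n|^2 = \infty$. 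Since $(a_n)$ is bounded above by $1$, the Cesàro limit being $0$ forces $\liminf a_n = 0$, i.e.\ $(a_n)$ is not bounded below; while if $\sum n|b_n|^2 < \infty$ one argues directly from $1-g(x) \sim g'(1)(1-x)$ that $a_n$ stays bounded away from $0$.

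The main obstacle will be the direction $\sum n|b_n|^2 < \infty \implies (a_n)$ bounded below: an Abelian limit of the generating function controls averages of $a_n$, not the individual terms, so I would need to upgrade this to a genuine lower bound on every $a_n$. The natural way is to exploit the recursion $a_n = \sum_{k=1}^n c_k a_{n-k}$ together with monotonicity-type estimates on $(a_n)$ to rule out oscillation; concretely, I expect $(a_n)$ to be eventually monotone (or at least comparable to its averages) once $g'(1) < \infty$, so that the Cesàro limit $1/g'(1) > 0$ transfers to a pointwise lower bound. Verifying this regularity of $(a_n)$ from the nonnegativity of the $c_n$ is the delicate step; everything else is bookkeeping with the generating function.
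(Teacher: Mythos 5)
Your reduction is the same as the paper's: Proposition~\ref{P:M_V_iso} reduces everything to showing that $(a_n)$ is bounded below if and only if $\mu := \sum_{n\ge1} n|b_n|^2 < \infty$, and your handling of the hypotheses of that proposition is fine. The direction you treat rigorously --- that $\mu=\infty$ forces $\liminf_n a_n = 0$ --- is correct, and in fact needs less than Hardy--Littlewood: if $a_n \ge \delta > 0$ for all $n$, then $(1-g(x))/(1-x) = \bigl((1-x)\sum_n a_n x^n\bigr)^{-1} \le \delta^{-1}$, and since $(1-g(x))/(1-x) = \sum_n c_n(1+x+\dots+x^{n-1})$ increases to $\mu$ as $x\to 1^-$, this gives $\mu\le\delta^{-1}<\infty$ with no Tauberian input at all.

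The genuine gap is the converse, which you yourself flag as ``the delicate step'' and do not prove. What is needed is exactly the Erd\Humlaut{o}s--Feller--Pollard renewal theorem: for an aperiodic probability distribution $(c_n)$ on the positive integers (aperiodicity supplied here by $c_1=|b_1|^2>0$), the renewal sequence of \eqref{eqn:a_n_recursion} satisfies $\lim_n a_n = 1/\mu$. This is the one citation the paper's proof rests on, and it cannot be recovered from the Abelian limit of the generating function plus soft regularity: the Tauberian theorem controls Ces\`aro means only, and your proposed upgrade --- that $(a_n)$ is eventually monotone once $\mu<\infty$ --- is false. Take $c_1=\ep$ and $c_2=1-\ep$ with $\ep$ small: then $a_0=1$, $a_1=\ep$, $a_2=1-\ep+\ep^2$, $a_3\approx 2\ep$, and the even- and odd-indexed subsequences approach the limit $1/(2-\ep)$ from opposite sides, so $(a_n)$ oscillates forever. (The conclusion $\inf_n a_n>0$ still holds in that example, but not by your route; and ``comparable to its averages'' is, in effect, a restatement of the renewal theorem rather than a step toward it.) To close the argument you must either cite the renewal theorem, as the paper does, or reprove it, and the latter is a substantial piece of classical analysis rather than bookkeeping.
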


\begin{proof}
We know that $\M_V$ is isomorphic to $\Hinf$ if and only if the sequence $(a_n)$ is bounded below. 
Define
\[
  \mu = \sum_{n=1}^\infty n |b_n|^2 \in (0, \infty].
\]
By the Erd\Humlaut{o}s-Feller-Pollard theorem (see \cite[Chapter XIII, Section 11]{Feller}),
\[
  \lim_{n \to \infty} a_n = \frac{1}{\mu},
\]
where $\infty^{-1} = 0$. 
The theorem is applicable since $|b_1|^2 > 0$. 
Hence, $(a_n)$ is bounded below if and only if this series converges.
\end{proof}

\begin{cor} \label{C:radius}
Let $V$ and $(b_n)$ be as in the previous corollary.
Then if $\M_V$ is not isomorphic to $\Hinf$, the series 
\[
 g(z) = \sum_{n\ge1} |b_n|^2 z^n
 \qand 
 (1-g(z))^{-1} = \sum_{n\ge0} a_n z^n
\]
both have radius of convergence 1.
\end{cor}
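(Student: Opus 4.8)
The plan is to treat the two power series separately, invoking Corollary~\ref{C:iso_Hinf} for $g$ and a boundary-value argument for $(1-g)^{-1}$. Throughout, I would write $c_n = |b_n|^2 \ge 0$, so that $\sum_{n\ge 1} c_n = 1$ and $c_1 > 0$, and recall from Remark~\ref{R:a_n_gen_function} that $(1-g(z))^{-1} = \sum_{n\ge0} a_n z^n$ on $\bD$ with $a_n \in (0,1]$.

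First I would show that $g(z) = \sum_{n\ge1} c_n z^n$ has radius of convergence $R_g = 1$. Since $\sum_n c_n = 1 < \infty$, the series converges absolutely on $\ol{\bD}$, so $R_g \ge 1$. For the reverse inequality I argue by contradiction: if $R_g > 1$, pick $r$ with $1 < r < R_g$; convergence of $\sum_n c_n r^n$ forces $c_n \le M r^{-n}$ for some constant $M$, whence $\sum_n n c_n \le M \sum_n n r^{-n} < \infty$ because $r > 1$. Thus $\sum_n n |b_n|^2 < \infty$, and Corollary~\ref{C:iso_Hinf} then says $\M_V$ is isomorphic to $\Hinf$, contradicting the hypothesis. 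Hence $R_g = 1$. This is the only place where the standing assumption enters.

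Next I would show that $\sum_{n\ge0} a_n z^n$ has radius of convergence $R_a = 1$. Boundedness of $(a_n)$ in $(0,1]$ gives $\limsup_n a_n^{1/n} \le 1$, so $R_a \ge 1$. For $R_a \le 1$ I examine $z \to 1^-$ along the reals: since $c_n \ge 0$ and $\sum_n c_n = 1$, Abel's theorem gives $g(z) \to 1$, so $1 - g(z) \to 0^+$ and $(1-g(z))^{-1} \to +\infty$. Because $a_n > 0$, the increasing function $\sum_n a_n z^n$ tends to $\sum_n a_n$ as $z \to 1^-$; this limit being $+\infty$ shows that $\sum_n a_n$ diverges, so $R_a \le 1$, giving $R_a = 1$.

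All the estimates are elementary, so I do not expect a genuine obstacle; the one point needing care is to invoke Corollary~\ref{C:iso_Hinf} in the correct logical direction, namely that geometric decay of $c_n$ would force summability of $\sum_n n c_n$ and hence isomorphism with $\Hinf$. I would also note that the argument for $(1-g)^{-1}$ in fact does not use the standing hypothesis at all: its radius of convergence equals $1$ regardless, since $g(1^-)=1$ always produces a singularity at $z=1$.
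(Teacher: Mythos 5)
Your proof is correct. For the series $g$ you argue exactly as the paper does: radius at least $1$ from summability of $(|b_n|^2)$, and radius exactly $1$ because a larger radius would force $\sum_n n|b_n|^2<\infty$ and hence, by Corollary~\ref{C:iso_Hinf}, an isomorphism of $\M_V$ with $\Hinf$. For the series $\sum_n a_n z^n$, however, you take a genuinely different and more elementary route. The paper first notes that $(1-g)^{-1}$ is defined on $\bD$, so the radius is at least $1$, and then argues that a radius greater than $1$ would permit an analytic continuation of $g = 1 - \big(\sum_n a_n z^n\big)^{-1}$ past the unit circle (after ruling out zeros of $\sum_n a_n z^n$ on $\partial\bD$ via the boundedness of $g$), contradicting the first half. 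You instead let $x\to 1^-$ along the reals: positivity of the $c_n$ and $\sum_n c_n = 1$ give $g(x)\to 1$, hence $\sum_n a_n x^n = (1-g(x))^{-1}\to+\infty$, and positivity of the $a_n$ then forces $\sum_n a_n=\infty$, which already caps the radius at $1$. This avoids the analytic-continuation step entirely and, as you correctly observe, shows that the conclusion for $(1-g)^{-1}$ is unconditional --- it uses only $\sum_n|b_n|^2=1$, not the hypothesis that $\M_V$ is not isomorphic to $\Hinf$ --- whereas the paper's argument routes through the first half and hence through that hypothesis. (The unconditional divergence of $\sum_n a_n$ in this setting is in fact recorded separately in Proposition~\ref{P:sequence_algebras}.) Both arguments are sound; yours is shorter and isolates precisely where the hypothesis is used.
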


\begin{proof}
Since $(b_n)$ is in $\ltwo$, the sequence is bounded, and hence the series for $g$
has radius of convergence at least 1.
If this radius of convergence is $R>1$, then the series $\sum_{n=1}^\infty n |b_n|^2$
converges. So Corollary~\ref{C:iso_Hinf} shows that $\M_V$ is isomorphic to $\Hinf$.
Observe that $g$ is bounded on $\bD$ by $\|(b_n)\|_2^2 =1$. 
In particular, $g(z) \ne 1$ for $z \in \bD$, and thus $(1-g(z))^{-1}$ is defined on $\bD$.
Hence the series for $(1-g(z))^{-1}$ had radius of convergence at least 1.
If this radius of convergence were greater than 1, then the only obstruction to
\[ g(z) = 1 - \frac1{\sum_{n\ge0} a_nz_n} \]
being defined on a disc of radius $R>1$ is that $(1-g(z))^{-1}$ has a zero on $\partial\bD$.
This however implies that $g$ has a pole on the circle, which is impossible because
$g$ is bounded on $\bD$. 
Therefore $\sum_{n\ge0} a_n z^n$ has radius of convergence exactly 1.
\end{proof}

We have seen that not all algebras $\M_V$ are isomorphic to $\Hinf$.
In fact, we will now exhibit a whole scale of mutually non-isomorphic algebras
of this type. To this end, it is again more convenient to work with the algebras
$\Mult(\H_f)$, which are subalgebras of $\Hinf$. 
We ask: which algebras of functions on $\bD$ arise in this way?

\begin{prop}  \label{P:sequence_algebras}
An algebra $\M$ of functions on $\bD$ arises in the way described above
if and only if $\M$ is the multiplier algebra of a Hilbert function space on $\bD$
with kernel $K$ of the form
\[
  K(z,w) = \sum_{n=0}^\infty a_n (z \ol w)^n,
\]
where $a_0 = 1$ and $a_1 \neq 0$, which satisfies the following two properties:
\begin{enumerate}[label=\normalfont{(\arabic*)}]
\item $K$ is a complete Nevan\-linna-Pick kernel.
\item $\dsum_{n=0}^\infty a_n = \infty$.
\end{enumerate}
\end{prop}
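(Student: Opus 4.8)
The plan is to prove both implications through the generating-function relation between $(a_n)$ and the sequence $(c_n)$, together with the standard characterization of complete Nevanlinna--Pick kernels. Throughout, let $g(t) = \sum_{n\ge1} c_n t^n$ be the function determined by $1 - \tfrac{1}{K(z,w)} = g(z\ol w)$; equivalently $(c_n)$ is the sequence for which $(1-g)^{-1} = \sum_{n\ge0} a_n t^n$, which is exactly \eqref{eqn:a_n_gen_function} (the recursion \eqref{eqn:a_n_recursion}). In particular $c_1 = a_1$, and $K(z,0) = a_0 = 1$, so $K$ is automatically normalized at the origin.

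For the forward implication, suppose $\M = \Mult(\H_f)$ with $f(z) = (b_1 z, b_2 z^2, \dots)$, $\|(b_n)\|_2 = 1$, $b_1 \ne 0$. Then $c_n = |b_n|^2 \ge 0$, $\sum_n c_n = \|(b_n)\|_2^2 = 1$, and $K = (1 - g(z\ol w))^{-1}$. Since $1 - \tfrac1K = \sum_n c_n (z \ol w)^n$ is a positive kernel (a pointwise-convergent sum of the positive rank-one kernels $c_n z^n \ol{w^n}$) and $K$ is normalized at $0$, the characterization of complete Nevanlinna--Pick kernels (see \cite{AM}) gives (1). For (2), monotone convergence yields $g(t) \to \sum_n c_n = 1$ as $t \to 1^-$, so $(1-g(t))^{-1} \to \infty$, and a second application of monotone convergence gives $\sum_n a_n = \lim_{t\to1^-}(1-g(t))^{-1} = \infty$.

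For the converse I would translate the hypotheses into statements about $(c_n)$. As $K$ is normalized at $0$, the complete Pick property (1) is equivalent to $g(z\ol w) = 1 - \tfrac1K$ being a positive kernel; and a kernel of the form $\sum_{n\ge1} c_n (z\ol w)^n$ is positive if and only if every $c_n \ge 0$, the nontrivial direction following by evaluating the defining quadratic form on scaled roots of unity $z_j = r\zeta^j$ (with $\zeta = e^{2\pi i/N}$) and letting $r \to 0^+$ to isolate each $c_k$. Thus (1) gives $c_n \ge 0$ for all $n$, with $c_1 = a_1 \ne 0$. Finiteness of $K$ on $\bD$ forces $g(t) < 1$ on $[0,1)$, hence $\sum_n c_n \le 1$; and condition (2), via $\infty = \sum_n a_n = \lim_{t\to1^-}(1-g(t))^{-1}$ together with $g(t) \to \sum_n c_n$, forces $\sum_n c_n = 1$.

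With $(c_n)$ in hand the construction is immediate: set $b_n = \sqrt{c_n}$, so $(b_n) \in \ltwo$ with $\|(b_n)\|_2^2 = \sum_n c_n = 1$ and $b_1 = \sqrt{c_1} \ne 0$, and let $f(z) = (b_1 z, b_2 z^2, \dots)$. Then $\ip{f(z),f(w)} = \sum_n c_n (z\ol w)^n = g(z\ol w)$, so the kernel of $\H_f$ is $(1 - g(z\ol w))^{-1}$, whose Taylor coefficients are $(a_n)$ by \eqref{eqn:a_n_gen_function}; hence it equals $K$, $\H_f = \H$, and $\M = \Mult(\H_f)$ arises as required. The main point --- and the only place where condition (2) is used --- is securing the normalization $\sum_n c_n = 1$, which is precisely what guarantees $\|(b_n)\|_2 = 1$ and hence that $f$ maps $\bD$ into the open unit ball of $\ltwo$; everything else is bookkeeping with the generating-function identity and the Pick characterization.
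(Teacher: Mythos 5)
Your proof is correct and follows essentially the same route as the paper: both directions hinge on the generating-function identity $\sum a_n t^n = (1-\sum c_n t^n)^{-1}$, the Agler--McCarthy characterization of normalized complete Pick kernels via positivity of $1-1/K$ (equivalently, nonnegativity of the $c_n$), and a monotone-convergence/Abel argument showing that condition (2) is equivalent to $\sum_n c_n = 1$, after which one sets $b_n = \sqrt{c_n}$. The only cosmetic difference is that you re-derive the nonnegativity of the $c_n$ from kernel positivity via roots of unity, where the paper simply cites \cite[Theorem 7.33]{AM} for that statement.
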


\begin{proof}
Suppose that $K$ satisfies the conditions above. Since $K$ is a complete
Nevan\-linna-Pick kernel, the sequence $(c_n)$ defined by
\[
  \sum_{n=1}^\infty c_n z^n = 1 - \frac{1}{\sum_{n=0}^\infty a_n z^n}
\]
is positive by \cite[Theorem 7.33.]{AM}. The last condition guarantees that
\[
  \sum_{n=1}^\infty c_n = \sup_{0 < t < 1} \sum_{n=1}^\infty c_n t^n = 1.
\]
As $a_1 \neq 0$, also $c_1 \neq 0$. Defining $b_n = \sqrt{c_n}$, we see that
$\M$ arises as above (compare Remark \ref{R:a_n_gen_function}).

Conversely, suppose that $\M$ arises as above. Then $\M$ is the multiplier algebra
of a reproducing kernel Hilbert space on the disc whose kernel is of the desired form. 
By \cite[Theorem 7.33.]{AM}, $K$ is a complete Nevan\-linna-Pick kernel. 
Finally,
\[
  \sum_{n=0}^\infty a_n = \sup_{0 < t < 1} \sum_{n=0}^\infty a_n t^n
   = \sup_{0 < t < 1} \frac{1}{1 - \sum_{n=1}^\infty c_n t^n} = \infty
\]
because $\sum_{n=1}^\infty c_n = 1$.
\end{proof}

\begin{eg} \label{E:H_s}
For $s \in \bR$, let $\H_s$ be the reproducing kernel Hilbert space of functions on $\bD$
with kernel
\[
  K(z,w) = \sum_{n=0}^\infty (n+1)^s (z \ol w)^n.
\]
Note that $\H_0$ is the Hardy space, and that $\H_{-1}$ is the Dirichlet space. It is known
that for $s \le 0$, the kernels are complete Nevan\-linna-Pick kernels (see \cite[Corollary 7.41]{AM}).
If $-1 \le s \le 0$, they satisfy the hypotheses of the last proposition (see also \cite[Example 8.8]{AM}).
Consequently, every multiplier algebra $\Mult(\H_s)$ is isomorphic to an algebra $\M_{V_s}$
where $V_s = f_s(\bD)$ is a variety, and 
\[
 f_s(z) = (b_{s,1}z, b_{s,2}z^2, \dots) \qfor z \in \bD
\]
extends to a homeomorphism of $\ol{\bD}$ onto $\ol{V_s}$.
Moreover, each $\H_s$ and thus each $\Mult(\H_s)$ is automorphism invariant (see \cite[Theorem 3.5]{CM}). 
Condition (iii) of Lemma~\ref{L:condition(1)} holds: $\sup_{n\ge1} \frac{(n+1)^s}{n^s} = 2^s < \infty$.
Thus by Corollary~\ref{C:condition(1)}, $\M_{V_s}$ satisfies condition (1).
As we observed, condition (2) always holds.

Therefore Proposition \ref{P:M_V_iso} applies.
Since the sequences $\big( (n+1)^s \big)_{n\ge1}$ are not comparable for distinct values of $s$,
the multiplier algebras $\M_{V_s}$ for $-1 \le s \le 0$ are mutually non-isomorphic. 
In this way, we obtain uncountably many isomorphism classes of algebras $\M_V$.

Consider 
\[
 \ip{ f_s(z), zf_s'(z) } = \sum_{n=1}^\infty n |b_{s,n}|^2 |z|^{2n} .
\]
This converges to a finite limit as $|z|$ tends to 1 if and only if $\sum_{n=1}^\infty n |b_{s,n}|^2 < \infty$, 
which by Corollary~\ref{C:iso_Hinf} holds precisely when 
$\M_{V_s}$ is isomorphic to $\Hinf$, namely when $s=0$.
Moreover, when $s<0$, $f_s$ is not $C^1$ because
\[
 \lim_{|z|\to 1} \|f_s'(z)\|^2 = \lim_{r\to 1} \sum_{n=1}^\infty n^2 r^{2n} |b_{s,n}|^2 = +\infty .
\]

A closely related class of examples are considered in \cite[p.1128--30]{ARS} called 
Besov spaces $B^\sigma_2(\bD)$ for $0 < \sigma < 1/2$. These spaces coincide
as spaces of functions with $H_s$ for $s = -1 + 2\sigma$, although the kernels
are somewhat different.
Not surprisingly, just as for their embeddings, our embeddings are tangential in the sense that
\[ \lim_{x\to 1,\,x\in(0,1)} \frac{1 - \|f_s(xe^{it})\|}{\| f_s(e^{it}) - f_s(xe^{it})\|} = 0 \]
as well. Indeed, using that
\[
  \sum_{n=0}^\infty (n+1)^s x^n \approx \Gamma(1+s) (1-x)^{-1-s}
\]
as $x \to 1$ from below (see \cite[Chap. XIII, p.280, ex. 7]{WW}), we see that
\begin{align*}
  1 - ||f_s(x e^{i t})|| &\sim 1 - ||f_s(x e^{ it})||^2  \\&
  = 1 - \sum_{n=1}^\infty |b_{s,n}|^2 x^{2 n}
  = \Big( \sum_{n=0}^\infty (n+1)^s x^{2 n} \Big)^{-1} \\
  &\sim (1 - x^2)^{1 +s}
  \sim (1-x)^{1 + s}.
\end{align*}
Here, we used the notation $f(x) \sim g(x)$ if $\lim_{x \to 1} f(x) g(x)^{-1} \in (0,\infty)$.
On the other hand,
\begin{align*}
  ||f_s(e^{i t}) - f_s(x e^{i t})||^2 &= 
  \sum_{n=1}^\infty |b_{s,n}|^2 (1 - x^n)^2  \\&
  = 1 - 2 \sum_{n=1}^\infty |b_{s,n}|^2 x^n + \sum_{n=1}^\infty |b_{s,n}|^2 x^{ 2n} \\
  &= 2 \Big( \sum_{n=0}^\infty (n+1)^s x^n \Big)^{-1}
  - \Big( \sum_{n=1}^\infty (n+1)^s x^{2 n} \Big)^{-1}.
\end{align*}
Since
\[
  2 \Big( \sum_{n=0}^\infty (n+1)^s x^n \Big)^{-1} \approx 2 \Gamma(1+s)^{-1} (1-x)^{1 +s}
\]
and
\[
  \Big( \sum_{n=1}^\infty (n+1)^s x^{2 n} \Big)^{-1} \approx \Gamma(1+s)^{-1} (1-x^2)^{1+s}
  \approx \Gamma(1+s)^{-1} 2^{1+s} (1-x)^{1+s},
\]
we have
\[
  ||f_s(e^{i t}) - f_s(x e^{i t})|| \sim (1-x)^{(1+s)/2}.
\]
Thus,
\[ \lim_{x\to 1,\,x\in(0,1)} \frac{1 - \|f_s(xe^{it})\|}{\| f_s(e^{it}) - f_s(xe^{it})\|} = 0. \]
Similarly, for $s= -1$, obtain the same tangential property because
\[
  1 - ||f_s(x e^{i t})|| \sim  
  \Big( \sum_{n=0}^\infty (n+1)^s x^{2 n} \Big)^{-1} \sim -  \log(1-x)^{-1}
\]
and
\begin{align*}
  ||f_s(e^{i t}) - f_s(x e^{i t}) ||^2 &= 
  2 \Big( \sum_{n=0}^\infty (n+1)^s x^n \Big)^{-1}
  - \Big( \sum_{n=1}^\infty (n+1)^s x^{2 n} \Big)^{-1}  \\&
  \sim -  \log(1-x)^{-1}.
\end{align*}

It also follows for $-1 \le s < 0$,
\begin{align*}
 \lim_{x\to 1,\,x\in(0,1)} \frac{\re\ip{f_s(1)-f_s(x),f_s(1)}}{1-x} &= 
 \lim_{x\to 1-} \sum_{n\ge1} |b_{s,n}|^2 \frac{1-x^n}{1-x} \\&= 
 \sum_{n\ge1} n |b_{s,n}|^2 = +\infty
\end{align*}
by Corollary~\ref{C:iso_Hinf}.
\end{eg}

\section{Embedding closed discs}  \label{S:compact}

Next we will consider a class of varieties in $\bB_\infty$ which 
includes the spaces $\H_s$ for $s < -1$.  
Again we define $f:\bD \to \bB_\infty$ by
\[
 f(z) = (b_1 z, b_2 z^2, b_3 z^3, \ldots) \qfor z \in \bD,
\]
with $(b_n)_{n=1}^\infty \in \ltwo$ and  $b_1 \neq 0$.
Here, however, we assume that 
\begin{enumerate}
\renewcommand{\itemsep}{1ex}
\item $||(b_n)||_2 = r<1$, and 
\item $\sum_{n\ge1} |b_n|^2z^n$ has radius of convergence 1. 
\end{enumerate}
Let $V = f(\bD)$. As observed in the previous section, $f$ extends to 
a continuous injection of $\ol{\bD}$ onto $\ol{V}$. 
But because $r<1$, $\ol{V}$ is a compact subset of $\ol{r\bB_\infty} \subset \bB_\infty$.

As we observed in the previous section, $\H_V$ is unitarily equivalent to a
reproducing kernel Hilbert space $\H_f$ on $\bD$ with kernel
\[
  K(z,w) = \frac1{1- \sum_{n=1}^\infty |b_n|^2 (z\ol w)^n} =:  \sum_{n=0}^\infty a_n (z \ol w)^n .
\]
Setting $c_n = |b_n|^2$, we see as in Remark~\ref{R:a_n_gen_function} that
$g(z) = \sum_{n\ge1} c_nz^n$ determines $(a_n)$ by 
\[
  \frac{1}{1 - g(z)} = \sum_{n=0}^\infty a_n z^n \quad \FOR z \in \bD.
\]
Now $(c_n)$ is summable and $\|g\|_\infty = r^2 < 1$, 
so $(1-g)^{-1}$ extends to be continuous on $\ol{\bD}$. 
By hypothesis, the power series for $g(z)$ has radius of convergence 1.
Thus $g$ does not analytically continue across the unit circle---so neither does $(1-g)^{-1}$.
Therefore $\sum_{n=0}^\infty a_n z^n$ also has radius of convergence 1. \vspace{.3ex}
Note that this argument can be reversed: if $\sum_{n=0}^\infty a_n z^n$ has radius of convergence 1,
then so does $\sum_{n\ge1} c_nz^n$. This is much like the proof of Corollary~\ref{C:radius}.

An example of this are the spaces $\H_s$ of Example~\ref{E:H_s} for $s < -1$. 
This space has kernel
\[
  K(z,w) = \sum_{n=0}^\infty (n+1)^s (z \ol w)^n \qfor z,w \in \bD.
\]
Since 
\[
 \sum_{n\ge0} a_n = \sum_{n\ge0} (n+1)^s < \infty,
\]
this doesn't fit into Proposition~\ref{P:sequence_algebras}.
However, this series has radius of convergence 1, so by the previous paragraph,
it fits into the framework of this section.

The fact that $f$ has radius of convergence 1 means that there is no natural extension 
of $V$ beyond $\ol{V}$ to something that looks like a variety in the classical sense. 
In finite dimensions, no variety in $\bB_d$ can be compact \cite[Theorem 14.3.1]{Rudin}. 
Nevertheless, it turns out that while $V$ is not a variety, its compact closure $\ol{V}$ is a variety!

\begin{lem} \label{L:compact_variety}
If $(b_n)$ and $f$ are defined as above, then $\ol{V} = f(\ol{\bD})$ is the common zero locus
of the polynomials $\{ b_n z_1^n - b_1^n z_n : n \ge 2 \}$; that is,
\[
  \ol{V} = V(\{ b_n z_1^n - b_1^n z_n : n \ge 2 \}) .
\]
\end{lem}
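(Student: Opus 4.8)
The plan is to show that the compact set $\ol{V}=f(\ol{\bD})$ coincides with $\Lambda := V(\{b_n z_1^n - b_1^n z_n : n\ge2\})$, which by definition is the set of $w\in\bB_\infty$ whose coordinates $w=(w_1,w_2,\dots)$ satisfy $b_n w_1^n = b_1^n w_n$ for all $n\ge2$. I would prove the two inclusions separately: the explicit form of $f$ yields $f(\ol{\bD})\subseteq\Lambda$ almost immediately, while the reverse inclusion is where the radius-of-convergence hypothesis (2) does its work.

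For the inclusion $f(\ol{\bD})\subseteq\Lambda$ I would simply substitute. For $z\in\ol{\bD}$ the $n$-th coordinate of $f(z)$ is $b_n z^n$, so $b_n(b_1 z)^n - b_1^n(b_n z^n)=0$ for every $n\ge2$; and $\|f(z)\|^2 = \sum_n |b_n|^2|z|^{2n}\le\sum_n|b_n|^2 = r^2<1$, so $f(z)\in\bB_\infty$. Hence $f(z)\in\Lambda$.

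The substance is the reverse inclusion $\Lambda\subseteq f(\ol{\bD})$, and the key point is that a point of $\Lambda$ is already determined by its first coordinate. Given $w\in\Lambda$, set $z := w_1/b_1$, which is legitimate since $b_1\ne0$; then $w_1 = b_1 z$, and for $n\ge2$ the relation $b_n w_1^n = b_1^n w_n$ gives $w_n = b_n(w_1/b_1)^n = b_n z^n$. Thus $w_n = b_n z^n$ for every $n\ge1$, so $w$ agrees coordinatewise with $f(z)$. It then remains only to verify that $z\in\ol{\bD}$, and this is exactly the place where hypothesis (2) enters: since $w\in\ltwo$ we have $\sum_n |b_n|^2|z|^{2n} = \|w\|^2 < \infty$, whereas the power series $\sum_n |b_n|^2 t^n$ has radius of convergence $1$ and hence diverges for every real $t>1$. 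Therefore $|z|^2\le1$, so $z\in\ol{\bD}$ and $w = f(z)\in f(\ol{\bD})$.

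I expect this final step to be the only genuine obstacle, and the thing to get right is that it is precisely assumption (2) that forbids parameters with $|z|>1$: if the radius of convergence were larger than $1$, then by continuity $\sum_n |b_n|^2|z|^{2n}$ would still be $<1$ for $|z|$ slightly exceeding $1$, so $f$ would extend past the unit circle as an injective $\ltwo$-valued map into $\bB_\infty$, and $\Lambda$ would acquire points $f(z)$ with $|z|>1$ lying outside $\ol{V}$. All the remaining verifications are routine; for example the degenerate case $w_1=0$ forces $w_n=0$ for all $n$ (as $b_1\ne0$), consistently with $z=w_1/b_1=0$.
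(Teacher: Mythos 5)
Your proof is correct and follows essentially the same route as the paper: verify the defining equations on $f(\ol{\bD})$ directly, and for the reverse inclusion set $z = w_1/b_1$, recover $w_n = b_n z^n$, and use the radius-of-convergence hypothesis together with $w \in \ltwo$ to force $|z| \le 1$. The only difference is that you spell out the forward inclusion and the role of hypothesis (2) a bit more explicitly, which the paper leaves terse.
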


\begin{proof}
Note that every point in $\ol{V}$ is a zero of the polynomials $b_n z_1^n -b_1^n z_n$.
Conversely, if $\Bz = (z_1,z_2,\ldots)$ satisfies these equations, then setting $z = z_1/b_1$,
we find that
\[
  z_n = b_n z^n \qforal n \in \bN .
\]
Since $(z_1,z_2, \ldots)$ is a point in $\ltwo$, we have
\[
  \infty > \sum_{n=1}^\infty |z_n|^2 = \sum_{n=1}^\infty |b_n|^2 |z|^{2 n}.
\]
As the series on the right has radius of convergence 1, 
it follows that $|z| \le 1$. 
Hence $z \in \ol{\bD}$ and $\Bz = f(z)$ belongs to $\ol{V}$.
\end{proof}
\begin{rem}
$\ol{V}$ is the minimal variety containing $V$. 
Hence $\H_V = \H_{\ol{V}}$, and $\M_V$ can be naturally identified with $\M_{\ol{V}}$ 
(see \cite[Proposition 2.2]{DRS2}). 
It is $\ol{V}$, not $V$, which fits into the framework developed in \cite{DRS2}. 
\end{rem}

Another curious property of these spaces is the following.
We believe that this result is well known, but we do not have a reference.

\begin{lem} \label{L:cnts_fns}
If $(b_n)$ and $f$ are defined as above, then $\H_{\ol{V}}$ and $\M_{\ol{V}}$ consist of 
continuous functions on $\ol{V}$.
\end{lem}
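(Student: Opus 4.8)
The plan is to work directly on the compact variety $\ol{V}$ and to exploit that, because $r<1$, the Drury-Arveson kernel restricts to a \emph{bounded, continuous} kernel on $\ol{V}\times\ol{V}$. Recall that $\ol{V}=f(\ol{\bD})$ is contained in the closed ball $\ol{r\bB_\infty}$. Hence for any two points $\lambda,\mu\in\ol{V}$, the Cauchy-Schwarz inequality gives
\[ |\ip{\lambda,\mu}| \le \|\lambda\|\,\|\mu\| \le r^2 < 1 . \]
Consequently the kernel $K(\lambda,\mu)=(1-\ip{\lambda,\mu})^{-1}$ of $\H_{\ol{V}}$ is well defined, continuous, and bounded by $(1-r^2)^{-1}$ on all of $\ol{V}\times\ol{V}$. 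This is the only place where the standing hypotheses are used.

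First I would record the two consequences of this bound. Taking $\lambda=\mu$ shows that the reproducing kernel functions are uniformly bounded in norm, since $\|k_\mu\|^2 = K(\mu,\mu)\le (1-r^2)^{-1}$. Next, the standard identity
\[ \|k_\mu - k_{\mu'}\|^2 = K(\mu,\mu) - K(\mu,\mu') - K(\mu',\mu) + K(\mu',\mu') , \]
together with the continuity of $K$ on $\ol{V}\times\ol{V}$, shows that $\mu\mapsto k_\mu$ is a norm-continuous map from $\ol{V}$ into $\H_{\ol{V}}$.

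Then for any $h\in\H_{\ol{V}}$ the reproducing property $h(\mu)=\ip{h,k_\mu}$ exhibits $h$ as the composition of the continuous map $\mu\mapsto k_\mu$ with the continuous functional $\ip{h,\cdot}$, so $h$ is continuous on $\ol{V}$. This settles the claim for $\H_{\ol{V}}$. For the multiplier algebra I would observe that $a_0=1$ forces the constant function into the space (in the disc picture $\|1\|^2=1/a_0=1$, and constants transfer across the unitary), so every $\phi\in\M_{\ol{V}}$ satisfies $\phi=\phi\cdot 1\in\H_{\ol{V}}$ and is therefore continuous as well; thus $\M_{\ol{V}}\subseteq\H_{\ol{V}}\subseteq C(\ol{V})$.

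There is no substantial obstacle here: once the kernel is known to be continuous and bounded away from its singularity, the conclusion follows from Cauchy-Schwarz and the kernel-difference identity alone. The one conceptual point to get right is the passage from $V$ to its closure $\ol{V}$—namely that the boundary points $f(\partial\bD)$ still lie inside $r\bB_\infty$, which is exactly what keeps $1-\ip{\lambda,\mu}$ bounded away from $0$ and makes $\ol{V}$ a compact subset of the \emph{open} ball on which the kernel extends continuously.
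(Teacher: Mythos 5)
Your proof is correct, but it takes a genuinely different route from the paper's. The paper transfers everything to the weighted Hardy space $\H_f$ on the disc and shows that the Taylor coefficients of any $h\in\H_f$ are absolutely summable, via Cauchy--Schwarz against $\sum_{n\ge0} a_n = (1-g(1))^{-1} = (1-r^2)^{-1} < \infty$; uniform convergence of the power series on $\ol{\bD}$ then gives continuity, which transfers to $\ol{V}$ through the homeomorphism $f$. You instead stay on $\ol{V}$ and use the abstract reproducing-kernel argument: since $\ol{V}$ is a compact subset of $\ol{r\bB_\infty}$ with $r<1$, the kernel $(1-\ip{\lambda,\mu})^{-1}$ is jointly continuous and bounded there, so $\mu\mapsto k_\mu$ is norm-continuous by the kernel-difference identity, and every $h(\mu)=\ip{h,k_\mu}$ is continuous. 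The two arguments hinge on the same quantitative fact ($\sum a_n = K(1,1) = (1-r^2)^{-1}$ in the disc picture is exactly the boundedness of the kernel at the boundary), and both handle $\M_{\ol{V}}$ identically by observing that the constant function lies in the space. Your version is more general---it applies verbatim to any RKHS with jointly continuous kernel on a compact space---while the paper's version yields the slightly stronger conclusion that functions in $\H_f$ have absolutely and uniformly convergent Taylor series on $\ol{\bD}$, which is what makes the later identification with a subalgebra of $\AD$ transparent. One minor point: your parenthetical that boundedness of the kernel is ``the only place where the standing hypotheses are used'' is accurate---the radius-of-convergence hypothesis plays no role in this lemma in either proof---but you do implicitly use that $f$ extends to a homeomorphism of $\ol{\bD}$ onto $\ol{V}$ (so that $\ol{V}=f(\ol{\bD})$ is compact), which is worth flagging since it is established earlier from $(b_n)\in\ltwo$.
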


\begin{proof}
The constant function $1$ is in $\H_V = \H_{\ol{V}}$, thus $\M_{\ol{V}} \subset \H_{\ol{V}}$, 
so it suffices to prove the claim for $\H_{\ol{V}}$. 
It is convenient to first work with $\H_f$ instead of $\H_{\ol{V}}$.
By Lemma~\ref{L:mult_norm}, we have $\|z^n\|_{\H_f} = a_n^{-1/2}$.
Every function $h(z)$ in $\H_f$ has an orthonormal expansion
\[
 h(z) = \sum_{n\ge0} d_n z^n 
 \quad\text{where}\quad
 \sum_{n\ge0} \frac{|d_n|^2}{a_n} = \|h\|_{\H_f}^2 < \infty .
\]
This series converges uniformly on $\ol{\bD}$ since
\begin{align*}
 \sum_{n\ge0} |d_n| &= \sum_{n\ge0} \frac{|d_n|}{\sqrt{a_n}} \sqrt{a_n} \\&
 \le \Big(  \sum_{n\ge0} \frac{|d_n|^2}{a_n} \Big)^{1/2} \Big(  \sum_{n\ge0} a_n \Big)^{1/2} \\&
 = \|h\|_{\H_f} \Big( \frac 1{1-g(1)} \Big)^{1/2} = \frac{\|h\|_{\H_f}}{\sqrt{1-r^2}} .
\end{align*}
Therefore every $h\in\H_f$ is continuous. As $f$ is a homeomorphism 
of $\ol{\bD}$ onto $\ol{V}$, this transfers to $\H_{\ol{V}}$.
\end{proof}

Let $\delta:\ol{V} \to M(\M_{\ol{V}})$ be the map taking $v \in \ol{V}$ to the character $\rho_v$ which evaluates multipliers at $v$. 
We do not know if $\delta$ is always a homeomorphism.
On the other hand, we know of no example of a compact variety $\ol{V}$ contained in the open ball $\bB_\infty$ 
where the maximal ideal space of the multiplier algebra is not homeomorphic to $\ol{V}$. 
Shields  \cite[section 9]{Shields} asks a similar question in the context of spaces of weighted shifts. 
He answers the question positively when the algebra is strictly cyclic, in which case the multiplier algebra
coincides with the Hilbert space (as functions). We can use his result here.

\begin{lem} \label{L:gelfand_comp_var}
  Suppose that
  \begin{equation*}
    \sup_{n\ge1} \sum_{k=0}^n \Big( \frac{a_k a_{n-k}}{a_n} \Big) < \infty.
  \end{equation*}
  Then the natural injection $\delta$ of $\ol{V}$ into $M(\M_{\ol{V}})$ is a homeomorphism. In particular, this
  is the case if $\overline{V}$ arises from $H_s, s < -1$.
\end{lem}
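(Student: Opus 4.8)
The plan is to recognize that the displayed hypothesis is exactly Shields' condition for the multiplier algebra to be \emph{strictly cyclic}, and then to invoke his analysis of the maximal ideal space. First I would dispose of the easy half of the claim. By Lemma~\ref{L:cnts_fns} every element of $\M_{\ol V}$ is a continuous function on $\ol V$, so if $v_n \to v$ in $\ol V$ then $\rho_{v_n}(h) = h(v_n) \to h(v) = \rho_v(h)$ for every $h$; hence $\delta$ is weak-$*$ continuous. It is injective because the coordinate functions are multipliers and separate the points of $\ol V$. Since $\ol V$ is compact and $M(\M_{\ol V})$ is Hausdorff, a continuous injection is automatically a homeomorphism onto its image, so the whole problem reduces to \emph{surjectivity} of $\delta$: every character of $\M_{\ol V}$ must be a point evaluation.

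The central step is to show that the hypothesis forces $\Mult(\H_f) = \H_f$ as sets of functions, with equivalent norms. Working on the disc through the unitarily implemented isomorphism $C_f : \M_{\ol V} \to \Mult(\H_f)$, write $h = \sum_n d_n z^n$ and $g = \sum_n e_n z^n$ in $\H_f$, so that $\|h\|^2_{\H_f} = \sum_n |d_n|^2/a_n$ by Lemma~\ref{L:mult_norm}. Expanding the product and applying the Cauchy--Schwarz inequality to the $m$-th Taylor coefficient $\sum_k d_k e_{m-k}$, with the weights split as $\sqrt{a_k a_{m-k}}$, yields
\begin{equation*}
  \|hg\|^2_{\H_f} \le \Big( \sup_{m\ge0} \sum_{k=0}^m \frac{a_k a_{m-k}}{a_m} \Big)\, \|h\|^2_{\H_f}\, \|g\|^2_{\H_f}.
\end{equation*}
Thus, under the hypothesis, $\H_f$ is a Banach algebra under pointwise multiplication and $\H_f \subseteq \Mult(\H_f)$. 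The reverse inclusion always holds, since applying $M_h$ to the constant function $1 \in \H_f$ gives $h \in \H_f$. Hence the two algebras coincide and their norms are equivalent, which is precisely the assertion that $1$ is a strictly cyclic vector for $\Mult(\H_f)$.

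With strict cyclicity established, I would invoke Shields \cite[Section~9]{Shields} to conclude that the maximal ideal space of $\M_{\ol V} \cong \Mult(\H_f)$ consists solely of point evaluations and is homeomorphic to $\ol V$; this is the surjectivity of $\delta$, completing the proof. Alternatively one can argue directly: polynomials are dense in $\Mult(\H_f)$ (by the norm equivalence and their density in $\H_f$); every $\lambda \in \ol{\bD}$ determines a bounded evaluation on $\H_f$ via the uniform estimate $\sum_n |d_n| \le (1-r^2)^{-1/2}\|h\|_{\H_f}$ from the proof of Lemma~\ref{L:cnts_fns}; and for a character $\phi$ the scalar $\lambda = \phi(z)$ satisfies $|\lambda| \le 1$, because the supermultiplicativity $a_m a_n \le a_{m+n}$ of Lemma~\ref{L:mult_norm} makes $\log a_n$ superadditive while the radius of convergence being $1$ gives $\limsup_n a_n^{1/n} = 1$, so $\lim_n a_n^{1/n}=1$ and the spectral radius of $M_z$ equals $1$. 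Continuity and density then force $\phi$ to agree with evaluation at $\lambda$.

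Finally, for $\H_s$ with $s < -1$ we have $a_n = (n+1)^s$, and I would verify the hypothesis by splitting at the midpoint: for $0 \le k \le n/2$ one has $n-k+1 \ge (n+2)/2$, whence $(n-k+1)^s \le 2^{-s}(n+2)^s$, and symmetrically for $k > n/2$, giving
\begin{equation*}
  \sum_{k=0}^n \frac{(k+1)^s (n-k+1)^s}{(n+1)^s} \le 2^{1-s}\Big(\frac{n+2}{n+1}\Big)^s \sum_{j=1}^\infty j^s,
\end{equation*}
which is bounded in $n$ since $\sum_j j^s$ converges exactly when $s < -1$ and $\big(\tfrac{n+2}{n+1}\big)^s \le 1$. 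The main obstacle is the middle paragraph: extracting strict cyclicity cleanly from the coefficient estimate, and then ensuring (through $a_m a_n \le a_{m+n}$ and the radius of convergence $1$) that the spectral radius of $M_z$ is exactly $1$, so that no characters escape to or beyond the boundary of $\ol{\bD}$.
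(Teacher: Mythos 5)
Your proposal is correct and follows essentially the same route as the paper: the paper's proof simply cites Shields (Section 9) for the fact that the displayed condition gives strict cyclicity of $M_z$ and hence that the Gelfand space of $\Mult(\H_f)$ is the closed disc, and cites Shields' Example 1 after Proposition 33 for the case of $\H_s$, $s<-1$. You have merely unpacked those citations — the Cauchy--Schwarz convolution estimate for strict cyclicity, the spectral-radius argument via $a_m a_n \le a_{m+n}$ and radius of convergence $1$, and the midpoint-splitting bound for $a_n=(n+1)^s$ — all of which are sound.
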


\begin{proof}
  The results in Section 9 of \cite{Shields} show that the operator $M_z$ on $\H_f$ is strictly cyclic if the supremum
  is finite,
  hence the Gelfand space of $\Mult(\H_f)$ is the closed unit disc. It follows that $\delta$ is a homeomorphism. In the
  case of $\H_s, s < -1$, Example 1 after Proposition 33 in \cite{Shields} shows that the supremum is finite.
\end{proof}

Now we can establish isomorphism results for this family of compact varieties
that parallel the results of the previous section.

\begin{thm} \label{T:compact_var_iso} 
Let 
\[ f(z) = (b_1z,b_2z^2,b_3z^3,\dots) \qand  \tilde{f}(z) = (\tilde{b}_1z,\tilde{b}_2z^2,\tilde{b}_3z^3,\dots) \]
be functions of $\ol{\bD}$ into $\bB_\infty$,  with 
\[ \|(b_n)\|_2 = r < 1, \quad  \|(\tilde{b}_n)\|_2 = r'<1 \qand  b_1\tilde{b}_1 \ne 0 \] 
such that the series 
\[ \sum_{n\ge1} |b_n|^2 z^n \qqand  \sum_{n\ge1} |\tilde{b}_n|^2 z^n \]
both have radius of convergence $1$.
Let $\ol{V} = f(\ol{\bD})$ and $\ol{\td{V}} = \tilde{f}(\ol{\bD})$.

\begin{enumerate}[label=\normalfont{(\arabic*)}]
\item $\M_{\ol{V}}$ and $\M_{\ol{\td{V}}}$ are isomorphic via the natural map if and only if
the sequences $(a_n)$ and $(\tilde{a}_n)$ are comparable.

\item Suppose that $\M_{\ol{V}}$ satisfies the hypothesis of Lemma \ref{L:gelfand_comp_var}: 
\[ 
 \sup_{n\ge1} \sum_{k=0}^n \Big( \frac{a_k a_{n-k}}{a_n} \Big) < \infty 
 \quad\text{where}\quad \sum_{n\ge0} a_n z^n = \frac1{1-\sum_{n\ge1} |b_n|^2z^n},
\] 
and is automorphism  invariant.
Assume that $\M_{\ol{V}}$ is isomorphic to $\M_{\ol{\td{V}}}$.
Then the restriction $F$ of $ \phi^*$ to $\ol{\td{V}}$ is a homeomorphism of $\ol{\td{V}}$ onto $\ol{V}$ 
which is holomorphic on $\td{V}$ and $\phi(h) = h \circ F$. 
There is a M\"obius map $\theta$ so that the following diagram commutes:
\[
  \xymatrix{ \M_{\ol V} \ar[r]^{\phi} \ar[d]_{C_f} & \M_{\ol{\td{V}}} \ar[d]^{C_{\tilde{f}}} \\ \AD \ar[r]_{C_\theta} & \AD }
\]
Moreover, they are isomorphic via the natural map of composition with $G = f\circ f^{\prime -1}$. 
\end{enumerate}
\end{thm}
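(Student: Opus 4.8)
The plan is to treat the two parts separately, handling part~(1) exactly as in Proposition~\ref{P:M_V_iso} and part~(2) by adapting the machinery of Section~\ref{S:auto_inv} to the compact setting. For part~(1), note first that $c_n = |b_n|^2 \ge 0$ with $c_1 = |b_1|^2 \neq 0$, so Lemma~\ref{L:mult_norm} applies and gives $\|z^n\|_{\Mult(\H_f)}^2 = a_n^{-1}$, and likewise $\tilde a_n^{-1}$ in $\H_{\tilde f}$. If $(a_n)$ and $(\tilde a_n)$ are comparable, then the orthogonal spanning set $\{z^n\}$ has comparable norms in $\H_f$ and $\H_{\tilde f}$, so the identity is a bounded invertible diagonal operator between these spaces; hence $\Mult(\H_f) = \Mult(\H_{\tilde f})$ with equivalent norms, which is precisely the assertion that $\M_{\ol V}$ and $\M_{\ol{\td{V}}}$ are isomorphic via composition with $G = f \circ \tilde f^{-1}$. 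Conversely, if the natural map is an isomorphism then $\Mult(\H_f) = \Mult(\H_{\tilde f})$, and since these are semisimple the identity is automatically a topological isomorphism, so Lemma~\ref{L:mult_norm} forces comparability of $(a_n)$ and $(\tilde a_n)$.

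For part~(2) the key preliminary is to verify that $\ol V$ satisfies conditions~(1) and~(2) of Theorem~\ref{T:iso_discs}. Condition~(2) is automatic from Proposition~\ref{P:condition(2)}, since $\ol V$ is the zero locus of the polynomials $b_n z_1^n - b_1^n z_n$. For condition~(1), the hypothesis of Lemma~\ref{L:gelfand_comp_var} gives that $\delta$ is a homeomorphism of $\ol V$ onto $M(\M_{\ol V})$; in particular every character is a point evaluation, so $\pi^{-1}(\lambda) = \{\rho_\lambda\}$ for each $\lambda \in \ol V$. I would then run the argument of Theorem~\ref{T:iso_discs} with $V_1 = \ol V$ and $V_2 = \ol{\td{V}}$. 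Setting $F_i = \phi(Z_i)$ and $F = [F_i]$, the coefficients are multipliers, hence continuous on $\ol{\td{V}}$ by Lemma~\ref{L:cnts_fns}, and the Cauchy--Schwarz estimate from that proof shows each coordinate of $F \circ \tilde f$ lies in $\AD$, so $F \circ \tilde f$ is continuous on $\ol{\bD}$ and holomorphic on $\bD$; as $\tilde f$ is a biholomorphism of $\bD$ onto $\td V$, $F$ is holomorphic on $\td V$. A pleasant simplification over the open-disc case is that $\ol V$ is a \emph{compact} subset of the open ball $\bB_\infty$, so $F$ automatically takes values in $\bB_\infty$ and the maximum-modulus step is unnecessary; since $M(\M_{\ol V}) = \ol V$, the map $F$ sends $\ol{\td{V}}$ into $\ol V$ and $\phi(h) = h \circ F$.

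It remains to upgrade $F$ to a homeomorphism and to produce the diagram. As in Theorem~\ref{T:iso_discs}, the isomorphism $\phi$ is implemented by composition, hence weak-$*$ continuous, and by Krein--Smulian so is $\phi^{-1}$; thus $\phi^*$ and $(\phi^{-1})^* = (\phi^*)^{-1}$ both carry point evaluations to point evaluations. Combined with $M(\M_{\ol V}) = \ol V$, this forces $\phi^*(\ol{\td{V}}) = \ol V = M(\M_{\ol V})$, whence $\ol{\td{V}} = M(\M_{\ol{\td{V}}})$ as well; since point evaluations on $\ol{\td{V}}$ are continuous characters by Lemma~\ref{L:cnts_fns}, $\delta$ is a continuous bijection from the compact space $\ol{\td{V}}$ onto $M(\M_{\ol{\td{V}}})$ and hence a homeomorphism, so $F = \phi^*|_{\ol{\td{V}}}$ is a homeomorphism of $\ol{\td{V}}$ onto $\ol V$. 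Then $\theta = f^{-1} \circ F \circ \tilde f$ is a self-homeomorphism of $\ol{\bD}$ that is holomorphic and bijective on $\bD$, hence a M\"obius map, and the commuting $\AD$-diagram follows as in Corollary~\ref{C:Mobius}, using Lemma~\ref{L:cnts_fns} to see that $C_f$ and $C_{\tilde f}$ land in $\AD$.

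Finally, for the natural-map isomorphism I would write $F = \sigma \circ G$ with $G = f \circ \tilde f^{-1}$ and $\sigma = f \circ \theta \circ f^{-1}$ a conformal automorphism of $\ol V$. Automorphism invariance of $\M_{\ol V}$ makes $C_\sigma$ an automorphism, so $C_G = \phi \circ C_\sigma^{-1}$ is an isomorphism of $\M_{\ol V}$ onto $\M_{\ol{\td{V}}}$, exactly the natural map; this is the compact analogue of Corollary~\ref{C:mob_inv_iso}. I expect the main obstacle to be the homeomorphism step: one must transfer the strict-cyclicity conclusion $M(\M) = \overline{\phantom{V}}$ on the variety from $\ol V$ to $\ol{\td{V}}$ purely through the isomorphism, and it is the weak-$*$ continuity of $\phi^{-1}$ (via Krein--Smulian) together with $M(\M_{\ol V}) = \ol V$ that does the essential work here.
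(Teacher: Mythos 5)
Your proof is correct and follows essentially the same route as the paper: part (1) via Lemma~\ref{L:mult_norm} and the diagonal-operator/semisimplicity argument of Proposition~\ref{P:M_V_iso}, and part (2) by identifying $M(\M_{\ol V})$ with $\ol V$ through Lemma~\ref{L:gelfand_comp_var}, rerunning the Theorem~\ref{T:iso_discs} argument, using Krein--Smulian for weak-$*$ continuity of $\phi^{-1}$, and then Corollary~\ref{C:Mobius} plus automorphism invariance for the diagram and the natural map. The extra details you supply (compactness of $\ol V$ making the maximum-modulus step unnecessary, and $\delta$ being a homeomorphism on $\ol{\td V}$ via Lemma~\ref{L:cnts_fns}) are accurate refinements of what the paper states more tersely.
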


\begin{proof}
(1)  This follows as in Proposition~\ref{P:M_V_iso}.

(2) Since $\phi$ is a continuous isomorphism, $\phi^*$ yields a homeomorphism of
the maximal ideal spaces. 
By Lemma~\ref{L:gelfand_comp_var}, $M(\M_{\ol{V}}) = \delta(\ol{V}) \simeq \ol{V}$.
So we identify $M(\M_{\ol{V}})$ with $\ol{V}$.
For the other algebra, we have that $\ol{\td{V}}$ is identified with $\delta(\ol{\td{V}})$ as a subset of
$M(\M_{\ol{\td{V}}})$.
Let $F:\ol{\td{V}}\to\ol{V}$ be the restriction of $\phi^*$ to this copy of $\ol{\td{V}}$.
The argument in the proof of Theorem~\ref{T:iso_discs} works here too, to show that $F$
is holomorphic on $\td{V}$. 
Now
\[
 \phi(h)(\tilde{v}) = \phi^*(\rho_{\tilde{v}})(h) = \rho_{F(\tilde{v})}(h) = (h \circ F)(\tilde{v}) 
 \qfor h \in \M_{\ol{V}} \, \AND \, \tilde{v} \in \ol{\td{V}} .
\]
Thus $\phi = C_F$ is a composition operator.

By an adaptation of \cite[Section 11.3]{DRS1} as in the proof of Theorem~\ref{T:iso_discs}, 
the fact that $\phi$ is implemented by composition implies that $\phi$ is weak-$*$ continuous. 
And the argument continues to conclude that $\phi^{-1}$ is also weak-$*$ continuous. 
In particular, $(\phi^{-1})^*$ takes point evaluations to point evaluations. 
As this map is the inverse of $F$, we deduce that $F$ maps onto $\ol{V}$; 
and hence $M(\M_{\ol{\tilde V}}) = \ol{\tilde V}$.

Repeat the proof of Corollary~\ref{C:Mobius} to get the commutative diagram.
The only change is that, since the multipliers are continuous by Lemma~\ref{L:cnts_fns},
the range is considered as a subalgebra of the disc algebra $\AD$, 
rather than in the larger algebra $\Hinf$.
Since $\M_{\ol{V}}$ is automorphism invariant, we may apply the 
automorphism for $\theta^{-1}$ to obtain the natural map as in Proposition~\ref{P:M_V_iso}.
\end{proof}

\begin{eg}
The spaces $\H_s$ for $s<-1$ yield an uncountable family of varieties in $\bB_\infty$
which are homeomorphic to $\ol{\bD}$.
Their multiplier algebras are automorphism invariant (see \cite[Theorem 3.5]{CM}) and they satisfy
the hypothesis of Lemma \ref{L:gelfand_comp_var}.
These sequences are not comparable for different values of $s$.
Thus by Theorem~\ref{T:compact_var_iso}, they have non-isomorphic multiplier algebras.
\end{eg}

\section{Interpolating sequences}  \label{S:interpolating}

We finish the treatment of the algebras $\M_{\ol{V}}$ of the previous section
by showing that under the assumptions of Lemma~\ref{L:gelfand_comp_var} these algebras are not isomorphic to an algebra of the type 
$\M_W$ for any variety $W$ whose closure meets the boundary of the ball.
This result should not be surprising, as isomorphism of the algebras yields
a homeomorphism of the maximal ideal spaces. In the setting of Lemma~\ref{L:gelfand_comp_var}
the maximal ideal space is homeomorphic to $\ol{\bD}$.
The reader may suspect that this is never the case when $\ol{W}$ intersects the boundary.

The way we will establish this is by showing that any sequence in the ball that
converges to the boundary contains an interpolating subsequence.
It then follows that $\M_W$ has $\linf$ as a quotient, and hence its
maximal ideal space contains a copy of the Stone-\v Cech compactification $\beta \bN$ of $\bN$.
In particular, it is not metrizable; so it is not the unit disc. 
We were not able to show, without imposing any special assumptions, that an algebra $\M_{\ol{V}}$ as in Section~\ref{S:compact} can never be isomorphic to an algebra of the type occuring in Section ~\ref{S:Binfty}. 

A sequence $(x_n)$ in $\bB_\infty$ is an \textit{interpolating sequence} for $\M_\infty$
if for every sequence $(a_n) \in \linf$, there is a multiplier $h \in \M_\infty$
such that $h(x_n)=a_n$.
The multiplier algebras considered here are all of the form $\M_V$,
where $V$ is a variety in $\bB_\infty$.
These are (complete) quotients of $\M_\infty$ via the restriction map.
So any sequence in $V$ is interpolating for $\M_V$ if and only if it is interpolating for $\M_\infty$.

\begin{prop} \label{P:interpolating_sequences}
Let $(z_n)$ be a sequence in $\bB_\infty$ such that $\lim_{n \to \infty} |z_n| = 1$.
Then $(z_n)$ contains a subsequence which is interpolating for $\M_\infty$.
\end{prop}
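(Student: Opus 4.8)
The plan is to pass to a subsequence whose normalized reproducing kernels are almost orthonormal, and then feed this into the complete Pick property of $\M_\infty$. Recall from Section~\ref{S:pseudo} the normalized kernels $\nu_w = (1-\|w\|^2)^{1/2} k_w$, which satisfy $\|\nu_w\|=1$ and
\[ \ip{\nu_z,\nu_w} = \frac{(1-\|z\|^2)^{1/2}(1-\|w\|^2)^{1/2}}{1 - \ip{z,w}} . \]
The starting observation is the elementary estimate that for each fixed $w \in \bB_\infty$ one has $|1 - \ip{z,w}| \ge 1 - \|w\| > 0$, and hence $|\ip{\nu_z,\nu_w}| \le (1-\|z\|^2)^{1/2}/(1-\|w\|) \to 0$ as $\|z\| \to 1$. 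So once a finite set of points is chosen, any further point with norm sufficiently close to $1$ is nearly orthogonal (in the normalized-kernel sense) to all of them.

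First I would carry out an inductive extraction. Fix $\ep \in (0,1)$. Having selected $w_1 = z_{n_1}, \dots, w_{k-1} = z_{n_{k-1}}$, the decay above together with $\|z_n\| \to 1$ lets me choose $n_k > n_{k-1}$ with $|\ip{\nu_{z_{n_k}}, \nu_{w_j}}| < \ep\, 2^{-k}$ for every $j < k$; put $w_k = z_{n_k}$. Splitting $\sum_{j \ne k}$ into the ranges $j<k$ and $j>k$ then gives $\sup_k \sum_{j\ne k} |\ip{\nu_{w_j}, \nu_{w_k}}| \le \ep$. Writing $G = \big[\ip{\nu_{w_j}, \nu_{w_k}}\big]_{j,k}$ for the Gram matrix of the normalized kernels, the Schur test applied to the self-adjoint operator $G - I$ (whose diagonal vanishes) yields $\|G - I\| \le \ep$; in particular $G$ is bounded, and every finite compression $G_F$ inherits $\|G_F - I\| \le \ep$.

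It remains to convert near-orthonormality into an interpolation bound. Since $\M_\infty$ has the complete Pick property, given data $(a_k)$ with $\|a\|_\infty \le 1$ there is $h \in \M_\infty$ with $h(w_k) = a_k$ and $\|h\|_{\M_\infty} \le 2$ exactly when the Pick matrix $\big[(1 - \tfrac14 a_j \ol{a_k})\,k(w_j,w_k)\big]$ is positive semidefinite. Conjugating by $\operatorname{diag}\big((1-\|w_k\|^2)^{1/2}\big)$ rewrites this as $G - \tfrac14 A G A^* \succeq 0$, where $A = \operatorname{diag}(a_k)$. Since $AA^* \preceq I$ and $\|A\| \le 1$, writing $G = I + E$ with $\|E\|\le\ep$ gives
\[ G - \tfrac14 A G A^* = \big(I - \tfrac14 AA^*\big) + \big(E - \tfrac14 AEA^*\big) \succeq \big(\tfrac34 - \tfrac54\ep\big) I , \]
which is positive provided $\ep < \tfrac35$. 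Running this over an increasing exhaustion by finite sets $F$ (using $\|G_F - I\|\le\ep$) produces, for each finite portion of the data, an interpolant of multiplier norm at most $2$; a weak-$*$ cluster point in the weak-$*$ compact ball of radius $2$ in $\M_\infty$, together with weak-$*$ continuity of the point evaluations $\rho_{w_k}$, yields a single multiplier interpolating the entire subsequence. Hence $(w_k)$ is interpolating for $\M_\infty$.

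The main obstacle is this last step: turning the quantitative near-orthonormality $\|G - I\|\le\ep$ into a uniform interpolation bound via the Pick matrix, and then passing from finite to infinite data by a weak-$*$ compactness argument. The inductive extraction, by contrast, is routine, resting only on the elementary decay of $\ip{\nu_z,\nu_w}$ as $\|z\|\to 1$.
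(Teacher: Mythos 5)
Your proof is correct, and while it shares the paper's overall skeleton (inductive extraction of the subsequence, a uniform Pick-matrix bound over all admissible data, then a weak-$*$ cluster point), the quantitative core is genuinely different. The paper works with the \emph{unnormalized} Pick matrices $\big[(1-w_i\ol w_j)K(z_{n_i},z_{n_j})\big]$ and verifies positivity by Sylvester's criterion: the diagonal entry $K(z,z)=(1-\|z\|^2)^{-1}$ blows up as $\|z\|\to1$ while the off-diagonal entries stay bounded, so the last-row expansion of the determinant is dominated by $(1-r^2)\,\delta\, K(z,z)$, where $\delta$ is a uniform (compactness) lower bound on the previous determinant. You instead normalize the kernels, arrange $\sup_k\sum_{j\ne k}|\ip{\nu_{w_j},\nu_{w_k}}|\le\ep$ by the decay $|\ip{\nu_z,\nu_w}|\le(1-\|z\|^2)^{1/2}/(1-\|w\|)$, get $\|G-I\|\le\ep$ from the Schur test, and then read off positivity of the conjugated Pick matrix $G-\tfrac14 AGA^*\succeq(\tfrac34-\tfrac54\ep)I$ directly as an operator inequality. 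Your route buys an explicit uniform interpolation constant ($\|h\|\le 2$ for $\|a\|_\infty\le1$, improvable toward $1$ by shrinking $\ep$) and in fact shows the full infinite Grammian is bounded above and below, which is a slightly stronger conclusion; the paper's route avoids normalization and any operator-norm bookkeeping at the price of a less transparent determinant estimate and of only treating data of norm $\le r<1$ (which of course suffices by scaling). All the individual steps you use check out: the $2^{-k}$ summation trick does give row sums $\le\ep$, the conjugation by $\operatorname{diag}\big((1-\|w_k\|^2)^{1/2}\big)$ correctly converts the Pick matrix into $G-\tfrac14AGA^*$, and the finite-to-infinite passage by weak-$*$ compactness is the same as in the paper.
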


\begin{proof}
Fix $r \in (0,1)$.
We wish to show that there is a subsequence $(z_{n_k})$ of $(z_n)$ such that for every sequence
$(w_k) \in \ell^\infty$ of norm at most $r$, there is a multiplier $\phi\in \Mult(\H)$
of norm at most $1$ such that $\phi(z_{n_k}) = w_k$. We will recursively
construct the subsequence $(z_{n_k})$ such that for each $k$ and for all $w = (w_i) \in \ell^\infty$
with $||w|| \le r$, the $k \times k$ matrix
\begin{align*}
  A_k(w) = \Big[ (1-w_i \ol w_j) K(z_{n_i},z_{n_j}) \Big]_{i,j=1}^k
\end{align*}
is positive and invertible.
Once we have achieved this, the Nevan\-linna-Pick property yields,
for each $w \in \ell^\infty$ with $||w|| \le r$  and any positive integer $k$, 
the existence of a multiplier $h_k$ of norm at most $1$ such
that $h_k(z_{n_i}) = w_i$ for $1 \le i \le k$. 
Any weak*-cluster point $h$ of the sequence $(h_k)$
will then satisfy $h(z_{n_i}) = w_i$ for all $i \in \bN$.

We begin the construction by setting $z_{n_1} = z_1$. 
Suppose that $k \ge 2$ and that $z_{n_1}, \ldots, z_{n_{k-1}}$ have already been constructed. 
Given $w = (w_i) \in \ell^\infty$ with $||w|| \le r$, we set $v_{i j} = 1-w_i \ol{w_j}$. 
For $z \in \bB_\infty$, we consider the matrix $A(w,z)$ defined by
\[
  \begin{bmatrix}
  v_{1, 1} K(z_{n_1}, z_{n_1}) & \cdots & v_{1, k-1} K(z_{n_1}, z_{n_{k-1}}) & v_{1, k}  K(z_{n_1}, z) \\
   \vdots & \ddots & \vdots & \vdots  \\
  v_{k-1, 1} K(z_{n_{k-1}}, z_{n_1}) & \cdots & v_{k-1, k-1} K(z_{n_{k-1}},z_{n_{k-1}})  & v_{k-1, k} K(z_{n_{k-1}}, z) \\
  v_{k, 1} K(z, z_{n_1}) & \cdots & v_{k, k-1} K(z, z_{n_{k-1}}) & v_{k, k}  K(z, z)
  \end{bmatrix}.
\]
Observe that the first $(k-1) \times (k-1)$ minor equals $A_{k-1}(w)$, which is positive and invertible for all
choices of $w$ with $||w|| \le r$ by our
recursive assumption.
By Sylvester's criterion, it therefore suffices to show
that there exists $z_{n_k}$ with $n_k > n_{k-1}$
such that $\det(A(w,z_{n_k})) > 0$ for all such $w$. To see that this is possible, note that
\[
  \lim_{n \to \infty} K(z_n,z_n) = \lim_{n \to \infty} \frac1{1-\|z_n\|^2} = \infty .
\]
On the other hand, each $K(z_i,z)$ is bounded.
Moreover, by compactness of the unit ball in finite-dimensional spaces,
there exists $\delta > 0$ such that
\[
  \det(A_{k-1}(w)) > \delta
\]
for all $w$ with $||w|| \le r$.
Thus, in the expansion of the determinant of $A_n(w,z)$ along the last row, 
there is one term
\[
  |v_{k k} K(z,z) \det(A_{k-1}(w))| \ge (1-r^2) \delta K(z,z),
\]
which tends to infinity as $z \to 1$ uniformly in $w$, whereas all other terms are uniformly bounded.
Therefore the determinant is eventually strictly positive on the whole $r$-ball.
This establishes the existence of the desired point $z_{n_k}$, and thus finishes the recursive construction.
\end{proof}

\begin{cor} \label{C:stoneCech}
If $W$ is a variety in the ball $\bB_d$ for $d \le \infty$ such that $\ol{W}$ intersects 
the boundary of the ball, then $\linf$ is a quotient of $\M_W$ and $M(\M_W)$
contains a copy of $\beta \bN$.
\end{cor}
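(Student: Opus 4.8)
The plan is to apply Proposition~\ref{P:interpolating_sequences} to manufacture an interpolating subsequence lying on $W$ and accumulating at the boundary, and then to dualize the resulting interpolation map to obtain the embedded copy of $\beta\bN$. Since $\ol{W}$ meets the boundary of the ball, I would fix a point $\zeta\in\ol{W}$ with $\|\zeta\|=1$ and choose a sequence $(w_n)$ in $W$ with $w_n\to\zeta$, so that $\|w_n\|\to 1$. Regarding $\bB_d$ as a subset of $\bB_\infty$ (when $d<\infty$, the common zero set of the coordinate multipliers $z_{d+1},z_{d+2},\dots$), the points $w_n$ lie in $\bB_\infty$ with $\|w_n\|\to1$, so Proposition~\ref{P:interpolating_sequences} yields a subsequence $(w_{n_k})$ that is interpolating for $\M_\infty$. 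Because $\M_W$ is a complete quotient of $\M_\infty$ via the restriction map and $(w_{n_k})\subset W$, the observation preceding Proposition~\ref{P:interpolating_sequences} shows that $(w_{n_k})$ is interpolating for $\M_W$ as well.

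Interpolation means precisely that the unital homomorphism $\Phi:\M_W\to\linf$ given by $\Phi(h)=(h(w_{n_k}))_{k\ge1}$ is surjective. This map is contractive, since each point evaluation is a character of $\M_W$ and hence $|h(w_{n_k})|\le\|h\|_{\M_W}$. Thus $\Phi$ is a bounded surjective algebra homomorphism, and $\linf\cong\M_W/\ker\Phi$ is a quotient of $\M_W$, which is the first assertion.

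For the second assertion I would dualize. The adjoint $\Phi^*$ sends a character $\tau$ of $\linf$ to the character $\tau\circ\Phi$ of $\M_W$, and surjectivity of $\Phi$ forces $\Phi^*$ to be injective. Since $M(\linf)=\beta\bN$ is weak-$*$ compact and $\Phi^*$ is continuous from the weak-$*$ topology on $M(\linf)$ to the weak-$*$ topology on $M(\M_W)$, which is Hausdorff, $\Phi^*$ is a homeomorphism onto its image. Hence $M(\M_W)$ contains a copy of $\beta\bN$.

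The only point needing care, and the main potential obstacle, is handling the two cases $d<\infty$ and $d=\infty$ uniformly when passing from ``interpolating for $\M_\infty$'' to ``interpolating for $\M_W$''. This is exactly the content of the remark that a sequence in a variety is interpolating for the associated multiplier algebra if and only if it is interpolating for $\M_\infty$, itself a consequence of $\M_W$ being a quotient of $\M_\infty$ by restriction; for $d<\infty$ one may alternatively run the argument of Proposition~\ref{P:interpolating_sequences} directly in $\bB_d$, since that proof uses only the blow-up $K(z_n,z_n)=(1-\|z_n\|^2)^{-1}\to\infty$ together with the complete Nevan\-linna-Pick property, both of which hold verbatim.
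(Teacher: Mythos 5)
Your proposal is correct and follows essentially the same route as the paper: extract a sequence in $W$ tending to the boundary, apply Proposition~\ref{P:interpolating_sequences} to get an interpolating subsequence, use the restriction map as the surjection onto $\linf$, and dualize to embed $\beta\bN$ into $M(\M_W)$. The extra care you take with the $d<\infty$ versus $d=\infty$ cases and with the weak-$*$ topology is sound and only fills in details the paper leaves implicit.
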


\begin{proof}
Proposition~\ref{P:interpolating_sequences} shows that $W$ contains an interpolating sequence.
The restriction map to this sequence is the desired quotient onto $\linf$.
Hence $M(\linf)$, which is homeomorphic to $\beta\bN$, embeds as a closed subset of $M(\M_W)$.
\end{proof}

Thus we obtain the desired consequence.

\begin{prop}
Let $\ol{V}$ be a compact variety as considered in Theorem~$\ref{T:compact_var_iso}(2)$, 
and let $\td{V}$ be a variety as considered in section~$\ref{S:Binfty}$. 
Then there is no unital surjective algebra homomorphism from $\M_{\ol{V}}$ onto $\M_{\td{V}}$.
In particular, they are not isomorphic.
\end{prop}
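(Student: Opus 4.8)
The plan is to separate the two algebras by the topological character of their maximal ideal spaces. Under the hypothesis of Lemma~\ref{L:gelfand_comp_var}, the space $M(\M_{\ol{V}})$ is compact and \emph{metrizable}, whereas $M(\M_{\td{V}})$ contains a copy of $\beta\bN$, which is not metrizable. A unital surjective homomorphism $\phi:\M_{\ol{V}}\to\M_{\td{V}}$ would dualize to a topological embedding of $M(\M_{\td{V}})$ into $M(\M_{\ol{V}})$, and in particular would embed $\beta\bN$ into a metric space, which is impossible.

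First I would record the two inputs. Since $\M_{\ol{V}}$ satisfies the hypothesis of Lemma~\ref{L:gelfand_comp_var}, the natural injection $\delta$ is a homeomorphism, so $M(\M_{\ol{V}})\simeq\ol{V}\simeq\ol{\bD}$ is a compact metric space. On the other hand, a variety $\td{V}$ as in Section~\ref{S:Binfty} is built from a sequence with $\|(\tilde b_n)\|_2=1$, so that $\|\tilde f(z)\|^2=\sum_n|\tilde b_n|^2|z|^{2n}\to 1$ as $|z|\to1$; hence $\ol{\td{V}}$ meets the boundary of $\bB_\infty$. Corollary~\ref{C:stoneCech} then furnishes a homeomorphic copy of $\beta\bN$ inside $M(\M_{\td{V}})$.

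Next, assume toward a contradiction that $\phi:\M_{\ol{V}}\to\M_{\td{V}}$ is a unital surjective algebra homomorphism. For a character $\psi$ of $\M_{\td{V}}$, the functional $\phi^*(\psi):=\psi\circ\phi$ is multiplicative and satisfies $\phi^*(\psi)(1)=\psi(\phi(1))=1$, so it is a character of $\M_{\ol{V}}$; this defines a map $\phi^*:M(\M_{\td{V}})\to M(\M_{\ol{V}})$. It is injective because $\phi$ is onto (if $\psi_1\circ\phi=\psi_2\circ\phi$ then $\psi_1=\psi_2$ on $\phi(\M_{\ol{V}})=\M_{\td{V}}$), and it is continuous for the weak-$*$ topologies, since for each fixed $a\in\M_{\ol{V}}$ the assignment $\psi\mapsto\phi^*(\psi)(a)=\psi(\phi(a))$ is weak-$*$ continuous. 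No continuity hypothesis on $\phi$ is needed here, as characters of unital Banach algebras are automatically bounded.

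Finally, restrict $\phi^*$ to the embedded copy of $\beta\bN\subset M(\M_{\td{V}})$. This is a continuous injection from a compact space into the Hausdorff space $M(\M_{\ol{V}})$, hence a homeomorphism onto its image. Thus $\beta\bN$ would be homeomorphic to a subspace of $M(\M_{\ol{V}})\simeq\ol{\bD}$; but every subspace of a metrizable space is metrizable, while $\beta\bN$ is not metrizable, a contradiction. Therefore no such $\phi$ exists, and in particular $\M_{\ol{V}}$ and $\M_{\td{V}}$ are not isomorphic. I expect the only genuinely substantive step to be the two imported results, namely the interpolation argument of Proposition~\ref{P:interpolating_sequences} underlying Corollary~\ref{C:stoneCech} and the strict-cyclicity computation behind Lemma~\ref{L:gelfand_comp_var}; granting those, the remaining work is the soft observation that a compact non-metrizable space cannot inject continuously into a metric space.
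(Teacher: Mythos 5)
Your proposal is correct and follows essentially the same route the paper intends: use Lemma~\ref{L:gelfand_comp_var} to identify $M(\M_{\ol{V}})$ with the metrizable space $\ol{\bD}$, use Corollary~\ref{C:stoneCech} to place a copy of $\beta\bN$ inside $M(\M_{\td{V}})$, and observe that the dual of a unital surjection would embed the non-metrizable $\beta\bN$ into a metric space. Your explicit treatment of the dual map (injectivity from surjectivity, automatic weak-$*$ continuity, compact-to-Hausdorff) just fills in details the paper leaves implicit.
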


\begin{cor}
The Hilbert spaces $\H_s$ have non-isomorphic multiplier algebras for distinct $s \le 0$.
\end{cor}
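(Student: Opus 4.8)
The plan is to recast everything in terms of the varieties $V_s$ and then split into three cases according to whether each parameter lies in $[-1,0]$ or in $(-\infty,-1)$. Recall from Sections~\ref{S:Binfty} and~\ref{S:compact} that $\Mult(\H_s)$ is completely isometrically isomorphic to $\M_{V_s}$, so it is enough to prove that the algebras $\M_{V_s}$ are pairwise non-isomorphic for distinct $s \le 0$; recall also that the weight sequence attached to $\H_s$ is $a_n = (n+1)^s$. The one computation common to all cases is that $\big((n+1)^{s_1}\big)$ and $\big((n+1)^{s_2}\big)$ are never comparable when $s_1 \neq s_2$: if $s_1 > s_2$ then $(n+1)^{s_1}/(n+1)^{s_2} = (n+1)^{s_1-s_2} \to \infty$, so no two-sided bound can hold.

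If both $s_1, s_2$ lie in $[-1,0]$, the varieties fall under Section~\ref{S:Binfty}, and Example~\ref{E:H_s} has already checked that each $V_{s_i}$ satisfies conditions (1) and (2) of Theorem~\ref{T:iso_discs} and is automorphism invariant. Proposition~\ref{P:M_V_iso} then applies, and non-comparability of the weight sequences gives that $\M_{V_{s_1}} \not\cong \M_{V_{s_2}}$. If instead both $s_1, s_2$ lie in $(-\infty,-1)$, each $\ol{V_{s_i}}$ is a compact variety in the interior of the ball satisfying the hypothesis of Lemma~\ref{L:gelfand_comp_var} and is automorphism invariant, so Theorem~\ref{T:compact_var_iso} applies; again non-comparability yields that the multiplier algebras are not isomorphic.

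The decisive case, and the one that ties the two halves of the paper together, is the mixed one: say $s_1 \in [-1,0]$ and $s_2 < -1$. The structural dichotomy between the two sections is exactly what distinguishes these spaces. For $-1 \le s_1 \le 0$ one has $\sum_{n\ge0}(n+1)^{s_1} = \infty$, forcing $\|(b_{s_1,n})\|_2 = 1$, so $V_{s_1}$ is a boundary-touching variety of the type in Section~\ref{S:Binfty}. For $s_2 < -1$ one has $\sum_{n\ge0}(n+1)^{s_2} < \infty$, forcing $\|(b_{s_2,n})\|_2 < 1$, so $\ol{V_{s_2}}$ is a compact variety in the open ball of the type in Theorem~\ref{T:compact_var_iso}(2). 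The Proposition immediately preceding this corollary applies directly and shows that $\M_{\ol{V_{s_2}}}$ and $\M_{V_{s_1}}$ are not isomorphic.

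The underlying obstruction is topological, and this is where the substance of the argument lies: by Lemma~\ref{L:gelfand_comp_var} the character space of $\M_{\ol{V_{s_2}}}$ is the metrizable space $\ol{\bD}$, whereas by Corollary~\ref{C:stoneCech} the character space of $\M_{V_{s_1}}$ contains a copy of $\beta\bN$ and so is not metrizable. Since the three cases exhaust all pairs of distinct $s \le 0$, the corollary follows.
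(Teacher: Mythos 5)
Your proof is correct and follows the same route the paper intends: the case $s_1,s_2\in[-1,0]$ is Example~\ref{E:H_s} via Proposition~\ref{P:M_V_iso}, the case $s_1,s_2<-1$ is the example following Theorem~\ref{T:compact_var_iso}, and the mixed case is exactly the Proposition immediately preceding the corollary (whose content is the metrizability obstruction you describe). Nothing is missing.
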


\bibliographystyle{amsplain}

\end{document}